\providecommand{\U}[1]{\protect\rule{.1in}{.1in}}
\newtheorem{theorem}{Theorem}[section]
\newtheorem{proposition}[theorem]{Proposition}
\newtheorem{lemma}[theorem]{Lemma}
\newtheorem{corollary}[theorem]{Corollary}
\newtheorem{assumption}[theorem]{Assumption}
\theoremstyle{remark}
\newtheorem{remark}[theorem]{Remark}
\DeclareMathOperator{\diag}{diag}
\DeclareMathOperator{\rank}{rank}
\DeclareMathOperator{\sgn}{sgn}
\DeclareMathOperator{\Gr}{Gr}
\DeclareMathOperator{\V}{V}
\DeclareMathOperator{\B}{B}
\DeclareMathOperator{\OB}{OB}
\def\A{\mathcal A}
\def\SS{\operatorname{S}}
\def\Tt{\mathsf{T}}
\def\diag{\operatorname{diag}}
\def\BB{\operatorname{B}}
\def\dim{\operatorname{dim}}
\def\dd{\operatorname{d}}
\def\x{\mathbf x}
\def\y{\mathbf y}
\def\z{\mathbf z}
\def\A{\mathcal A}
\newcommand{\tp}{{\scriptscriptstyle\mathsf{T}}}
\tikzset{
  symbol/.style={
    draw=none,
    every to/.append style={
      edge node={node [sloped, allow upside down, auto=false]{$#1$}}}
  }
}
\begin{document}
\title[non-linear least squares problem over smooth variety]{Generic Linear Convergence for Algorithms of Non-linear Least Squares over Smooth Varieties} 

\author{Shenglong Hu}
\address{Department of Mathematics, College of Science, National University of Defense Technology, Changsha 410072, Hunan, China.}
\email{hushenglong@nudt.edu.cn}

\author{Ke Ye}
\address{State Key Laboratory of Mathematical Sciences, Academy of Mathematics and Systems Science, Chinese Academy of Sciences}
\email{keyk@amss.ac.cn}

\begin{abstract}
In applications,  a substantial number of problems can be formulated as non-linear least squares problems over smooth varieties.  Unlike the usual least squares problem over a Euclidean space, the non-linear least squares problem over a variety can be challenging to solve and analyze, even if the variety itself is simple. Geometrically,  this problem is equivalent to projecting a point in the ambient Euclidean space onto the image of the given variety under a non-linear map.  It is the singularities of the image that make both the computation and the analysis difficult.  In this paper,  we prove that under some mild assumptions,  these troublesome singularities can always be avoided.  This enables us to establish a linear convergence rate for iterative sequences generated by algorithms satisfying some standard assumptions.  We apply our general results to the low-rank partially orthogonal tensor approximation problem.  As a consequence,  we obtain the linear convergence rate for a classical APD-ALS method applied to a generic tensor,  without any further assumptions.
\end{abstract}

\subjclass[2010]{15A18; 15A69; 65F18}
\keywords{best low rank approximation, $R$-linear convergence, global convergence, partially orthogonal decomposable tensor}
\maketitle


\section{Introduction}\label{sec:intro}
Let $\mathrm{M}$ be a smooth $m$-dimensional manifold and let $\psi: \mathrm{M} \to \mathbb{R}^n$ be a smooth map.  For each fixed vector $\mathbf{b}\in \mathbb{R}^n$,  we consider the following \emph{non-linear least squares (NLS) problem}:
\begin{equation}\label{eq:projection-general}
\begin{array}{rl}
\min&\frac{1}{2}\|\mathbf b-\psi(\mathbf x)\|^2\\
\text{s.t.}&\mathbf x\in \mathrm{M}.
\end{array}
\end{equation}
The NLS problem is one of the most widely employed mathematical model in applications,  including computer graphics \cite{ELC19,MHRBWK16,SPSKL17},  robotics \cite{WE84,CL88,ZB94},  computer vision \cite{NSD07,WACS11,ZZFQ17} and statistics \cite{KW52, BW88}.  It is well-known that,  in the special case where $\mathrm{M} = \mathbb{R}^n$ and $\psi$ is a linear map,  \eqref{eq:projection-general} is the usual least squares problem which admits a closed-form solution.  However, deriving a closed-form solution for general $\mathrm{M}$ and $\psi$ is extremely difficult,  if not impossible.  Due to the great importance of the NLS problem in applications, various numerical algorithms have been proposed and analyzed, such as the Gauss-Newton method and the Levenberg-Marquardt method \cite{GP73,GM78,RW80,GMW81,VV91, BCN18}. It is worth noting that,  although $\psi$ is allowed to be a non-linear map,  $\mathrm{M}$ is assumed to be a Euclidean space in these works.  However, when M is a general manifold, the convergence analysis for \eqref{eq:projection-general} remains underdeveloped.

Geometrically,  Problem~\eqref{eq:projection-general} can be recast as a projection problem:
\begin{equation}\label{eq:projection-exp}
  \begin{array}{rl}
  \min&\frac{1}{2}\|\mathbf b-\mathbf y\|^2\\
  \text{s.t.}&\mathbf y\in Y \coloneqq \psi(\mathrm M).
  \end{array}
  \end{equation}
\if where $\psi(\mathrm{M})$ is the image of $\mathrm M$ under $\psi$.  \fi 
The projection problem \eqref{eq:projection-exp} is extensively studied in the literature when $Y$ is smooth or convex \cite{deutsch2001best, zarantonello1971projections,lewis2008alternating}.  However, it is a common scenario in practical applications that $Y$ is nonsmooth.  Typical examples include the set of low rank (structured) matrices \cite{HE71,W03},  the set of low rank (structured) tensors \cite{L-12,de2008tensor,CLQY20},  and the multiview variety \cite{DHOST16,  MRW21}.  In this situation,  some critical points of \eqref{eq:projection-general} would be mapped to singularities of $Y$,  since the optimality conditions for \eqref{eq:projection-exp} are often satisfied by points lying at the corners of $Y$\cite{B-99}.  In the analysis of algorithms for solving \eqref{eq:projection-general},  such critical points are  the primary source of difficulty \cite{HL-18,hu2023linear, JNPS23,ZG-01,CS-09,U-12}.  In prior studies,  those problematic points are often avoided by assumptions on the iterative sequence or the limiting point \cite{EK-15,EHK-15,breiding2018riemannian, khouja2022riemannian}.

In this paper, we present a general framework for the convergence analysis of algorithms for \eqref{eq:projection-general} when $Y$ is nonsmooth.  Our approach is based on decomposing $Y$ into a union of simpler subsets,  each of which can be analyzed easily.  One important implication of our framework is that,  \emph{critical points of \eqref{eq:projection-general} corresponding to singularities of $Y$ will be avoided automatically for most choices of $\mathbf{b}$.}  As a consequence,  it is possible to establish a convergence result without any assumptions.

\subsection*{Contributions}\label{sec:contribution}
Throughout this paper,  we assume that in \eqref{eq:projection-general},  $\mathrm{M}$ is a smooth variety and $\psi$ is a polynomial map.  Our main results are summarized as follows. 
\begin{enumerate}
  \item We prove (cf.  Theorem~\ref{thm:generic-critical-location}) that if $\mathrm{M}$ admits a decomposition satisfying Assumption~\ref{assump:setm},  then for a generic $\mathbf{b}$,  KKT points of \eqref{eq:projection-general} are not contained in a given subvariety of $\mathrm{M}$.
  
  \item We establish the linear convergence rate for any iterative algorithm for \eqref{eq:projection-general} (cf.  Theorem~\ref{thm:linear}),  as long as $\mathrm{M}$ and the algorithm satisfy Assumptions~\ref{assump:setm} and \ref{sec:generic-general}, respectively.  
  
  \item As an application,  we obtain the generic linear convergence rate of the iAPD-ALS algorithm (cf.  Algorithm~\ref{algo}) for the low-rank partially orthogonal tensor approximation problem, without any assumptions (cf.  Theorem~\ref{thm:generic}).  
\end{enumerate}

The remainder of the paper is organized as follows.  We prove our location theorem (cf. Theorem~\ref{thm:generic-critical-location}) in Section~\ref{sec:critical-general}.  For illustrative purposes,  we provide several examples in Section~\ref{sec:examples}, including the case where $\mathrm{M}$ is the variety of low-rank partially orthogonal tensors.  In Section~\ref{sec:generic-general}, we establish the linear convergence results for general cases (cf. Lemma~\ref{lem:linear} and Theorem~\ref{thm:linear}).  In Section~\ref{sec:iapd-als}, we present the iAPD-ALS algorithm (cf.  Algorithm~\ref{algo}) for the low-rank partially orthogonal tensor approximation problem. By applying Theorem~\ref{thm:linear},  we obtain the linear convergence rate of the iAPD-ALS algorithm for a generic tensor. 
\section{Notations and preliminaries}
\label{sec:prelim}
We use upright capital letters such as $\mathrm{M}$,  $\mathrm{N}$,  $\mathrm{P}$ for manifolds.  We denote vectors by bold lower case letters $\x$,  $\mathbf{y}$,  $\mathbf{z}$.  Matrices are denoted by capital letters $A$,  $B$,  $C$,  etc.  We use calligraphic capital letters such as $\mathcal{A}$,  $\mathcal{B}$,  $\mathcal{C}$ to denote tensors. 

If $\mathrm{M}$ is a submanifold of $\mathbb{R}^k$, then the \emph{normal space} $\mathsf{N}_\mathrm{M}(\x)$ of $\mathrm{M}$ at a point $\x\in \mathrm{M}$ is defined to be the orthogonal complement of its tangent space $\mathsf{T}_\mathrm{M}(\x)$ in $\mathbb{R}^k$, i.e., $\mathsf{N}_\mathrm{M} (\x) \coloneqq \mathsf{T}_\mathrm{M}(\x)^{\perp}$.  
\subsection{Matrix manifolds}
Let $m \le n$ be two positive integers. We define 
\begin{equation}\label{eq:prod-sp}
\operatorname{B}(m,n) \coloneqq \{A\in\mathbb R^{n\times m}\colon (A^\tp A)_{jj}= 1,\ 1\le j \le m\} =  \mathbb S^{n-1}\times\dots\times\mathbb S^{n-1}.
\end{equation}
It is clear that $\B(m,n)$ is an $m(n-1)$-dimensional smooth subvariety of $\mathbb{R}^{n\times m}$.  Inside $\operatorname{B}(m,n)$ there is a well-known \emph{Oblique manifold} \cite{AMS-08}
\begin{equation}\label{eq:oblique}
\operatorname{OB}(m,n) \coloneqq \{A\in\operatorname{B}(m,n)\colon \operatorname{det}(A^\tp A)\neq 0\}.
\end{equation}
In the sequel, we also need the notion of \emph{Stiefel manifold} consisting of $n\times m$ orthonormal matrices:
\begin{equation}\label{eq:stf}
\V(m,n) \coloneqq \{A \in \mathbb R^{n \times m}\colon A^\tp A=I_m \},
\end{equation}
where $I_m$ is the $m\times m$ identity matrix. It is obvious from the definition that $\V(m,n)$ is a closed submanifold of $\operatorname{OB}(m,n)$.  By definition, $\V(m,n)$ is naturally a Riemannian embeded submanifold of $\mathbb{R}^{n \times m}$ and hence its normal space is well-defined. Indeed, it follows from \cite[Chapter~6.C]{RW-98} and \cite{EAT-98,AMS-08} that
\begin{equation}\label{eq:normal-stf}
\mathsf{N}_{\V(m,n)}(A)=\{AX: X\in \SS^m\},
\end{equation}
where $\SS^m\subseteq \mathbb{R}^{m \times m}$ is the space of $m\times m$ symmetric matrices. 
\subsection{Morse function}
In the following, we recall the
notion of a Morse function and some of its basic properties. On a smooth manifold $\mathrm{M}$, a {smooth} function $f : \mathrm{M}\rightarrow \mathbb R$ is called a \textit{Morse function} if each critical point of $f$ on $\mathrm{M}$ is nondegenerate,  i.e.,  the Hessian matrix of $f$ in a local coordinate at each critical point is nonsingular.  The following result is well-known.
\begin{lemma}[Projection is Generically Morse]\cite[Theorem~6.6]{M-63}\label{lem:generic-morse}
Let $\mathrm{M}$ be a {submanifold} of $\mathbb R^n$. For {a generic} $\mathbf a = (a_1,\dots,a_n)^\tp \in\mathbb R^n$, the squared Euclidean distance function $f(\mathbf x) \coloneqq  \|\mathbf x-\mathbf a\|^2
$ is a Morse function on $\mathrm{M}$.
\end{lemma}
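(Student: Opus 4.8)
The plan is to realize the critical points of $f_{\mathbf a}(\mathbf x) = \|\mathbf x - \mathbf a\|^2$ on $\mathrm M$ as the fiber of a single smooth map, the \emph{endpoint map} of the normal bundle, and then to apply Sard's theorem. First I would form the total space of the normal bundle
\[
\mathrm N \coloneqq \{(\mathbf x, \mathbf v) \in \mathbb R^n \times \mathbb R^n : \mathbf x \in \mathrm M,\ \mathbf v \in \mathsf N_{\mathrm M}(\mathbf x)\}.
\]
Since $\mathrm M$ is an $m$-dimensional submanifold of $\mathbb R^n$, the bundle $\mathrm N$ is a smooth manifold of dimension $m + (n-m) = n$: over a coordinate chart of $\mathrm M$ one trivializes $\mathrm N$ by choosing a smooth orthonormal frame of normal vectors. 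Define $E \colon \mathrm N \to \mathbb R^n$ by $E(\mathbf x, \mathbf v) = \mathbf x + \mathbf v$; this is a smooth map between manifolds of the same dimension $n$.

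Next I would identify the critical set. A point $\mathbf x \in \mathrm M$ is a critical point of $f_{\mathbf a}$ exactly when $\nabla f_{\mathbf a}(\mathbf x) = 2(\mathbf x - \mathbf a)$ is orthogonal to $\mathsf T_{\mathrm M}(\mathbf x)$, i.e. when $\mathbf a - \mathbf x \in \mathsf N_{\mathrm M}(\mathbf x)$; equivalently $(\mathbf x, \mathbf a - \mathbf x) \in \mathrm N$ and $E(\mathbf x, \mathbf a - \mathbf x) = \mathbf a$. Thus the critical set of $f_{\mathbf a}$ is in bijection with $E^{-1}(\mathbf a)$, the correspondence being $\mathbf x \mapsto (\mathbf x, \mathbf a - \mathbf x)$.

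The technical core is to show that $\mathbf x$ is a \emph{nondegenerate} critical point of $f_{\mathbf a}$ if and only if $(\mathbf x, \mathbf a - \mathbf x)$ is a regular point of $E$ (equivalently, since $\dim \mathrm N = n$, a point where $dE$ is invertible). Working in a local parametrization $\phi \colon U \subseteq \mathbb R^m \to \mathrm M$ together with a local orthonormal normal frame, one computes the Hessian of $f_{\mathbf a}\circ\phi$ at a critical point; its entries are $\langle \partial_i\phi, \partial_j\phi\rangle + \langle \phi - \mathbf a, \partial_i\partial_j\phi\rangle$, namely the Gram matrix of the tangent frame corrected by the shape operator in the normal direction $\mathbf a - \mathbf x$. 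A direct comparison with the matrix of $dE$ in the same coordinates and frame shows the two degeneracy conditions coincide — both express that $\mathbf a$ is a focal point of $\mathrm M$ at $\mathbf x$. I expect this linear-algebra identification (relating the second fundamental form to $dE$) to be the step that requires the most care.

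Finally I would apply Sard's theorem to $E \colon \mathrm N \to \mathbb R^n$: its set of critical values has Lebesgue measure zero in $\mathbb R^n$; covering $\mathrm M$, and hence $\mathrm N$, by countably many charts disposes of any non-compactness, since a countable union of null sets is null. For every $\mathbf a$ outside this null set — a generic choice — $\mathbf a$ is a regular value of $E$, so by the previous step every critical point of $f_{\mathbf a}$ on $\mathrm M$ is nondegenerate; that is, $f_{\mathbf a}$ is a Morse function for generic $\mathbf a$.
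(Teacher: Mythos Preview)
Your proof is correct and is precisely the classical argument; the paper does not supply its own proof of this lemma but simply cites it as \cite[Theorem~6.6]{M-63} (Milnor's \emph{Morse Theory}), and what you have sketched is exactly Milnor's proof via the endpoint map of the normal bundle and Sard's theorem.
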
 
\subsection{Two inequalities}\label{subsec:ineq} In the sequel,  we will need two classical inequalites,  which we record below for ease of reference.
\begin{proposition}[\L ojasiewicz's Gradient Inequality]\cite{dC-92}\label{prop:lojasiewicz}
Let $\mathrm{M}$ be a Riemannian manifold and let $f: \mathrm{M} \to \mathbb R$ be a smooth function for which $\z^*$ is a nondegenerate critical point. Then there exist a neighborhood $U$ in $\mathrm{M}$ of $\z^*$ and some $\mu >0$ such that for all $\z\in U$
\[
\|\operatorname{grad}(f)(\z)\| \geq \mu  |f(\z)-f(\z^*)|^{\frac{1}{2}}.
\]
Here $\operatorname{grad}(f)$ denotes the Riemannian gradient of $f$.
\end{proposition} 
\begin{lemma}[Hoffman's Error Bound] \cite{facchinei2003finite}\label{lem:hoffman}
Suppose that $A\in\mathbb R^{m\times n}$ and $\mathbf b\in\mathbb R^n$. Then there is a constant $\tau(A)$ satisfying that  for any solution $\x$ of $A\y=\mathbf b$, there exists a $\x_0\in\{\y\colon A\y=\mathbf 0\}$ such that 
\begin{equation}\label{eq:hoffman}
\|\mathbf x-\mathbf x_0\|\leq \tau(A) \|\mathbf b\|.
\end{equation} 
\end{lemma}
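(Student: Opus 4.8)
The plan is to derive the bound from the Moore--Penrose pseudoinverse (equivalently, the singular value decomposition) of $A$, reducing everything to the elementary fact that a linear isomorphism between finite-dimensional inner-product spaces has finite operator norm. The point worth isolating at the outset is that the statement is only interesting because $\tau(A)$ must be taken uniformly over all admissible right-hand sides $\mathbf{b}$ and all solutions $\mathbf{x}$; for a single fixed $\mathbf{x}$ the existence of some $\mathbf{x}_0$ is trivial.

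First I would set up the orthogonal decomposition of the domain of $A$ as $\ker A \oplus (\ker A)^{\perp}$ and observe that the restriction $A\big|_{(\ker A)^{\perp}} \colon (\ker A)^{\perp} \to \im A$ is a linear bijection. Let $L$ denote its inverse and put $\tau(A) \coloneqq \|L\|$, the operator norm, which is finite because the spaces are finite-dimensional and which depends on $A$ alone (concretely $\tau(A) = 1/\sigma_{\min}(A)$, the reciprocal of the least nonzero singular value of $A$). Since $A\mathbf{y} = \mathbf{b}$ admits a solution exactly when $\mathbf{b} \in \im A$, I may restrict to that case, the assertion being vacuous otherwise.

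Next, given any solution $\mathbf{x}$ of $A\mathbf{y} = \mathbf{b}$, I would split it as $\mathbf{x} = \mathbf{x}_1 + \mathbf{x}_0$ with $\mathbf{x}_1 \in (\ker A)^{\perp}$ and $\mathbf{x}_0 \in \ker A$. Then $A\mathbf{x}_1 = A\mathbf{x} = \mathbf{b}$ forces $\mathbf{x}_1 = L\mathbf{b}$, so that $\|\mathbf{x} - \mathbf{x}_0\| = \|\mathbf{x}_1\| = \|L\mathbf{b}\| \le \|L\|\,\|\mathbf{b}\| = \tau(A)\|\mathbf{b}\|$, and $\mathbf{x}_0 \in \ker A = \{\mathbf{y} \colon A\mathbf{y} = \mathbf{0}\}$ is the required vector. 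There is no serious obstacle: the only subtlety is the uniformity of $\tau(A)$, which is precisely what the inequality $\|L\mathbf{b}\| \le \|L\|\,\|\mathbf{b}\|$ supplies, using that $L$ is a single fixed linear map determined by $A$. (For the genuine Hoffman bound governing systems of linear \emph{inequalities} one would need a compactness or polyhedral-geometry argument, but the linear-equation version stated here needs nothing beyond the pseudoinverse.)
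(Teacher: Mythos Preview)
Your argument is correct: the orthogonal splitting $\mathbf{x}=\mathbf{x}_1+\mathbf{x}_0$ with $\mathbf{x}_1\in(\ker A)^\perp$ and $\mathbf{x}_0\in\ker A$, together with the observation that $A|_{(\ker A)^\perp}\colon(\ker A)^\perp\to\im A$ is an isomorphism whose inverse has finite operator norm $\tau(A)=1/\sigma_{\min}(A)$, yields the bound immediately and uniformly in $\mathbf{b}$.

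As for comparison, the paper does not supply its own proof of this lemma: it is quoted as a known result from \cite{facchinei2003finite} and used as a black box (the only place the paper engages with its proof is the remark in Proposition~\ref{prop:unified-hoffman}, where the boundedness of $\tau(A)$ on the full-rank locus is extracted from the argument in \cite[Lemma~3.2.3]{facchinei2003finite}). Your pseudoinverse derivation is the standard one for the linear-equation case and in fact makes the dependence $\tau(A)=1/\sigma_{\min}(A)$ explicit, which is exactly what is needed later for the unified error bound: on the set of full-rank matrices the least nonzero singular value is continuous and bounded away from zero locally, so $\tau(A)$ is locally bounded. So your approach not only proves the lemma but also directly furnishes the ingredient the paper invokes from the cited reference.
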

We remark that $\tau(A)$ in Lemma~\ref{lem:hoffman} is a constant only depending on $A$.  If $\tau(A)$ is also bounded in a neighborhood of $A$,  we say that the \textit{unified Hoffman error bound} holds at $A\in\mathbb R^{m\times n}$. 
\subsection{Partially orthogonal tensor}
Let $r \le n_1 \le \cdots \le n_s$ and $1\le s \le k$ be positive integers and let $\mathcal A\in\mathbb R^{n_1}\otimes\dots\otimes\mathbb R^{n_k}$ be a tensor with a decomposition
\begin{equation}\label{eq:low-rank-orth}
\mathcal A=\sum_{j=1}^r\lambda_j\mathbf a^{(1)}_j\otimes\dots\otimes\mathbf a^{(k)}_j,
\end{equation}
such that $\lambda_j \in \mathbb{R}$,  $A^{(i)} \coloneqq \begin{bmatrix}\mathbf a^{(i)}_1,\dots,\mathbf a^{(i)}_r\end{bmatrix} \in \operatorname{B}(r,n_i) $ for each $1 \le i \le k$.  Furthermore,  $A^{(i)} \in \V(r,n_i)$ if $1 \le i \le s$.  The matrix $A^{(i)} \in \mathbb{R}^{n_i \times r}$ is called the $i$-th \emph{factor matrix} of the decomposition \eqref{eq:low-rank-orth} for $i=1,\dots, k$.  The tensor $\mathcal{A}$ is called a \emph{partially orthogonal tensor (with  $s$ orthonormal factors)} of rank at most $r$ and a decomposition of the form \eqref{eq:low-rank-orth} with the smallest possible $r$ is called a \textit{partially orthogonal rank decomposition} of $\mathcal{A}$.  The case where $s=k$ is studied in \cite{HL-18,hu2023linear,ZG-01} and the other cases are studied in \cite{YH-22}. 
\subsection{Polar decomposition}
The  existence and uniqueness of the polar decomposition is well-known.
\begin{proposition}[Polar Decomposition]\cite[Theorem~9.4.1]{GV-13}\label{lem:polar}
  Let $A\in\mathbb R^{n\times m}$ with $n\geq m$. Then there exist an orthonormal matrix $Q \in \V(m,n)$ and a unique $m\times m$ symmetric positive semidefinite matrix $H$ such that $A=QH$ and
  \begin{equation*}\label{eq:polar-optimal}
  Q\in\operatorname{argmax}\{\langle Y,A\rangle\colon Y\in \V(m,n)\}.
  \end{equation*}
  Moreover, if $A$ is of full rank, then $Q$ is uniquely determined and $H$ is positive definite.
  \end{proposition}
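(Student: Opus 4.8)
The plan is to deduce the whole statement from the singular value decomposition. Write a thin SVD $A = U\Sigma V^\tp$ with $U \in \V(m,n)$, $V \in \O(m)$, and $\Sigma = \diag(\sigma_1,\dots,\sigma_m)$, $\sigma_1 \ge \cdots \ge \sigma_m \ge 0$; such a decomposition exists because $n \ge m$. Put $H \coloneqq V\Sigma V^\tp$ and $Q \coloneqq UV^\tp$. Then $H \in \SS^m$ is positive semidefinite by construction, $Q^\tp Q = V U^\tp U V^\tp = I_m$ so that $Q \in \V(m,n)$, and $QH = UV^\tp V \Sigma V^\tp = U\Sigma V^\tp = A$. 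This settles existence.

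For the uniqueness of $H$, suppose $A = Q'H'$ for some $Q' \in \V(m,n)$ and some positive semidefinite $H' \in \SS^m$. Then $A^\tp A = H'\,(Q')^\tp Q'\,H' = (H')^2$, and since a positive semidefinite matrix has exactly one positive semidefinite square root (diagonalize $A^\tp A$ and match eigenvalues), we obtain $H' = (A^\tp A)^{1/2} = H$. This also identifies $H$ intrinsically, independently of the choice of $Q$.

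For the optimality property, take any $Y \in \V(m,n)$ and set $W \coloneqq U^\tp Y V \in \mathbb R^{m\times m}$. Using cyclic invariance of the trace and $\Sigma = \Sigma^\tp$,
\[
\langle Y, A\rangle = \tr(Y^\tp U\Sigma V^\tp) = \tr(\Sigma\, U^\tp Y V) = \tr(\Sigma W) = \sum_{i=1}^m \sigma_i W_{ii}.
\]
Each of $U^\tp$, $Y$, $V$ has operator norm at most $1$, hence $\|W\|_{\mathrm{op}} \le 1$ and $|W_{ii}| \le 1$ for all $i$; therefore $\langle Y, A\rangle \le \sum_i \sigma_i$. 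On the other hand, taking $Y = Q = UV^\tp$ gives $W = U^\tp U V^\tp V = I_m$, so $\langle Q, A\rangle = \sum_i \sigma_i$. Thus $Q$ attains the maximum of $Y \mapsto \langle Y, A\rangle$ over $\V(m,n)$.

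Finally, if $\rank(A) = m$ then all $\sigma_i > 0$, so $H = V\Sigma V^\tp$ is positive definite and invertible; the identity $A = QH$ then forces $Q = AH^{-1}$, which determines $Q$ uniquely. \textbf{Main obstacle.} The result is classical, so there is no serious difficulty; the step demanding the most care is the optimality characterization, namely the trace bound $\tr(\Sigma W) \le \sum_i \sigma_i$ together with the verification that $Q = UV^\tp$ realizes equality. One subtlety worth recording is that when $A$ is rank-deficient, the columns of $U$ associated with vanishing singular values are not unique, so $Q$ genuinely fails to be unique in that case, consistent with the proposition asserting uniqueness of $Q$ only in the full-rank case.
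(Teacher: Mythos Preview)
Your argument is correct and is the standard SVD-based derivation of the polar decomposition together with its variational characterization. Note, however, that the paper does not supply its own proof of this proposition: it is stated as a known result and attributed to \cite[Theorem~9.4.1]{GV-13}, so there is nothing in the paper to compare against. Your write-up would serve perfectly well as a self-contained proof; the only cosmetic point is that the trailing ``Main obstacle'' paragraph is commentary rather than proof and should be dropped in a final version.
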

\section{Locations of critical points for generic data}\label{sec:critical-general}
The goal of this section is to establish Theorem~\ref{thm:generic-critical-location},  which describes the location of a critical point of
\begin{equation}\label{eq:projection-general1}
\begin{array}{rl}
\min&\frac{1}{2}\|\mathbf b-\psi(\mathbf x)\|^2\\
\text{s.t.}&\mathbf x\in \mathrm{M}. 
\end{array}
\end{equation}
Here $\mathrm{M}$ is an $m$-dimensional smooth manifold of $\mathbb{R}^{m + p}$,  $\psi: \mathrm{M} \to \mathbb{R}^n$ is a smooth map and $\mathbf{b}\in \mathbb{R}^n$ is a fixed vector.  
 
By definition, a point $\mathbf{x}\in \mathsf{M}$ is a critical point of \eqref{eq:projection-general1} if and only if 
\begin{equation}\label{eq:critical point}
\left\langle \psi( \mathbf{x} ) - \mathbf{b} , \operatorname{d}\psi (\mathsf{T}_{\mathrm{M}} (\mathbf{x}) ) \right\rangle = 0.
\end{equation}
We notice that \eqref{eq:critical point} intrinsically characterizes critical points via the inner product in $\mathbb{R}^n$.  If $\mathrm{M}$ is further a submanifold of some Euclidean space,  then we can alternatively characterize critical points of \eqref{eq:projection-general1} by the normal space of $\mathrm{M}$.  
\begin{lemma}\label{lem:critical point}
A point $\mathbf{x}\in \mathrm{M}$ is a critical point of \eqref{eq:projection-general1} if and only if
\[
 J_{\mathbf x}(\psi)^\tp(\psi(\mathbf x)-\mathbf b) \in \mathsf{N}_{\mathrm{M}}(\mathbf x),
\]
where $\mathsf{N}_{\mathrm{M}}(\mathbf x)$ is the normal space of $\mathrm{M}$ at $\mathbf{x}$ in $\mathbb{R}^{m + p}$.
\end{lemma}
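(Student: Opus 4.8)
The plan is to unwind the intrinsic optimality condition \eqref{eq:critical point} into the language of the ambient space $\mathbb{R}^{m+p}$ and then use the adjoint property of the transpose. First I would record that the objective $g(\mathbf x) \coloneqq \tfrac12\|\mathbf b-\psi(\mathbf x)\|^2$ has differential $\operatorname{d}g_{\mathbf x}(v) = \langle \psi(\mathbf x)-\mathbf b,\, \operatorname{d}\psi_{\mathbf x}(v)\rangle$ for every $v\in\mathsf{T}_{\mathrm M}(\mathbf x)$, so that, by the standard first-order characterization of critical points on a manifold, $\mathbf x$ is a critical point of \eqref{eq:projection-general1} exactly when this pairing vanishes for all tangent $v$; this is precisely \eqref{eq:critical point}.

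Next I would pass from the intrinsic differential to the ambient Jacobian. Since $\psi$ is a polynomial map, it is defined and smooth on all of $\mathbb{R}^{m+p}$ (or, if one prefers to argue locally, one may replace $\psi$ by any smooth local extension to an open neighborhood of $\mathbf x$, which exists because $\mathrm M$ is an embedded submanifold). Writing $J_{\mathbf x}(\psi)\in\mathbb{R}^{n\times(m+p)}$ for the ambient Jacobian, the restriction of $\operatorname{d}\psi_{\mathbf x}$ to $\mathsf{T}_{\mathrm M}(\mathbf x)\subseteq\mathbb{R}^{m+p}$ agrees with the restriction of the linear map $v\mapsto J_{\mathbf x}(\psi)\,v$, and this restriction does not depend on the chosen extension because $v$ is tangent to $\mathrm M$. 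Substituting into \eqref{eq:critical point} and moving $J_{\mathbf x}(\psi)$ across the Euclidean inner product of $\mathbb{R}^n$ shows that $\mathbf x$ is a critical point if and only if $\langle J_{\mathbf x}(\psi)^\tp(\psi(\mathbf x)-\mathbf b),\, v\rangle = 0$ for all $v\in\mathsf{T}_{\mathrm M}(\mathbf x)$, i.e.\ if and only if $J_{\mathbf x}(\psi)^\tp(\psi(\mathbf x)-\mathbf b)$ is orthogonal to $\mathsf{T}_{\mathrm M}(\mathbf x)$ in $\mathbb{R}^{m+p}$. By definition $\mathsf{N}_{\mathrm M}(\mathbf x) = \mathsf{T}_{\mathrm M}(\mathbf x)^{\perp}$ in $\mathbb{R}^{m+p}$, which is exactly the asserted equivalence.

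I do not expect a genuine obstacle here; the only point requiring a little care is the identification of the intrinsic differential $\operatorname{d}\psi_{\mathbf x}$ with the restriction of the ambient Jacobian $J_{\mathbf x}(\psi)$, and the accompanying check that $J_{\mathbf x}(\psi)^\tp$ maps $\mathbb{R}^n$ into the ambient space $\mathbb{R}^{m+p}$ in which $\mathsf{N}_{\mathrm M}(\mathbf x)$ is defined, so that the membership statement is well typed. Everything else is bookkeeping on top of the first-order optimality condition and the defining relation $\langle A u, w\rangle = \langle u, A^\tp w\rangle$.
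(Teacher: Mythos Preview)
Your argument is correct and follows the same core strategy as the paper: pass from the intrinsic critical-point condition \eqref{eq:critical point} to an ambient Jacobian, then move across the Euclidean inner product via the transpose to land in $\mathsf{T}_{\mathrm M}(\mathbf x)^{\perp}=\mathsf{N}_{\mathrm M}(\mathbf x)$. The only difference is in how the ambient Jacobian is produced. You appeal to the global polynomial extension of $\psi$ (or, as you note parenthetically, any smooth local extension), whereas the paper constructs a specific extension via the tubular neighbourhood theorem, constant along normal fibres, so that in addition $J_{\mathbf x}(\psi)(\mathsf{N}_{\mathrm M}(\mathbf x))=\{0\}$. That extra vanishing is not actually used in the proof of the lemma, so your route is a touch more economical; the paper's choice has the advantage that it gives a canonical meaning to $J_{\mathbf x}(\psi)$ at this point in the exposition, where $\psi$ is only assumed smooth on $\mathrm M$ and the polynomial hypothesis has not yet been imposed. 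Your observation that the restriction of $J_{\mathbf x}(\psi)$ to $\mathsf{T}_{\mathrm M}(\mathbf x)$ is independent of the extension is exactly what makes the membership condition $J_{\mathbf x}(\psi)^{\tp}(\psi(\mathbf x)-\mathbf b)\in\mathsf{N}_{\mathrm M}(\mathbf x)$ well defined regardless of which extension one picks.
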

\begin{proof}
We recall that for any $\mathbf{x} \in \mathrm{M}$, there is a constant number $\varepsilon$ and open neighbourhoods $U$ (resp.  $V$) of $\mathbf{x}$ in $\mathbb{R}^{m + p}$ (resp.  $\mathrm{M}$) such that
\[
U \simeq \left\lbrace (\mathbf{y}, \mathbf{v}): \mathbf{v}\in  \mathsf{N}_{ \mathrm{M}}(\y),  \; \lVert \mathbf{v} \rVert \le \varepsilon,\;  \mathbf{y} \in V \right\rbrace.
\]
Thus $\psi|_{U\cap \mathrm{M}}$ naturally extends to $\widetilde{\psi}: U \to \mathbb{R}^n$ by $\widetilde{\psi}(\mathbf{y}, \mathbf{v}) = \psi(\mathbf{y})$.  By abuse of notation,  we simply denote $\widetilde{\psi}$ by $\psi$ where the meaning is clear from the context.  In particular,  the Jacobian map of $\psi$ is a linear map $J_{\mathbf{x}}(\psi): \mathbb{R}^{m + p} = \mathsf{T}_{\mathrm{M}}(\mathbf{x}) \oplus \mathsf{N}_{\mathrm{M}}(\mathbf{x}) \to \mathbb{R}^n$ such that
\[
J_{\mathbf{x}}(\psi) ( \mathsf{T}_{\mathrm{M}}(\mathbf{x}) )  = \operatorname{d}\psi ( \mathsf{T}_{\mathrm{M}}(\mathbf{x}) ),  \quad J_{\mathbf{x}}(\psi) ( \mathsf{N}_{\mathrm{M}}(\mathbf{x}) ) = \{\mathbf 0\}.
\]
This implies 
\[
\left\langle \psi( \mathbf{x} ) - \mathbf{b} , \operatorname{d}\psi ( \mathsf{T}_{\mathrm{M}}(\mathbf{x}) ) \right\rangle = \left\langle \psi( \mathbf{x} ) - \mathbf{b} , J_{\mathbf{x}}(\psi) ( \mathsf{T}_{\mathrm{M}}(\mathbf{x}) ) \right\rangle
= \langle  J_{\mathbf{x}}(\psi)^\tp(\psi(\mathbf{x}) - \mathbf{b}), \mathsf{T}_{\mathrm{M}}(\mathbf{x})  \rangle.
\]
Thus by \eqref{eq:critical point}, $\mathbf{x}$ is a critical point of \eqref{eq:projection-general} if and only if $J_{\mathbf{x}}(\psi)^\tp(\psi(\mathbf{x}) - \mathbf{b}) \in  \mathsf{N}_{\mathrm{M}}(\mathbf{x})$.
\end{proof}
Next we consider the incidence set  associated to \eqref{eq:critical point}:
\[
Z \coloneqq \lbrace   (\mathbf{b}, \mathbf x) \in \mathbb{R}^n \times \mathrm{M}: (\psi(\mathbf x) - \mathbf{b} ) \perp \dd\psi(\Tt_{\mathrm{M}}(\mathbf x))
  \rbrace.
\]
In the following, we further assume that $\mathrm{M}$ is a smooth variety and that $\psi: \mathrm{M} \to \mathbb{R}^n$ is a polynomial map.  Note that although the dimension of $\mathsf{T}_{\mathrm{M}}(\x)$ is a constant for all $\x \in \mathrm{M}$,  the dimension of $\operatorname{d}\psi ( \mathsf{T}_{\mathrm{M}}(\mathbf{x}) )$ may be different when $\x$ varies.
\begin{lemma}[Incidence variety]\label{lem:Zvariety}
If $\mathrm{M}$ is a smooth variety and $\psi$ is a polynomial map, then $Z$ is a variety.
\end{lemma}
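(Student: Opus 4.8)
The plan is to exhibit $Z$ as the common zero locus of a finite collection of polynomial functions on $\mathbb{R}^n \times \mathrm{M}$, where the latter is already a variety by hypothesis (a product of varieties is a variety, and $\mathbb{R}^n$ is affine space). The condition cutting out $Z$ is that $\psi(\mathbf{x}) - \mathbf{b}$ be orthogonal to every vector in $\operatorname{d}\psi(\mathsf{T}_{\mathrm{M}}(\mathbf{x}))$. The subtlety flagged just before the statement is that $\dim \operatorname{d}\psi(\mathsf{T}_{\mathrm{M}}(\mathbf{x}))$ can jump as $\mathbf{x}$ varies, so one cannot simply pick a global frame for this image bundle. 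Instead I would work with a frame for $\mathsf{T}_{\mathrm{M}}(\mathbf{x})$ itself (which has constant rank $m$) and push it forward.

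First I would cover $\mathrm{M}$ by finitely many Zariski-open affine pieces $\mathrm{M}_\alpha$ on each of which there exist polynomial (or regular) vector fields $\tau_1^\alpha(\mathbf{x}), \dots, \tau_m^\alpha(\mathbf{x})$ spanning $\mathsf{T}_{\mathrm{M}}(\mathbf{x})$ at every point of $\mathrm{M}_\alpha$; such a cover exists because $\mathrm{M}$ is smooth, so the tangent sheaf is locally free, and one can realize local frames by regular functions after shrinking. Since $\psi$ is a polynomial map, $\operatorname{d}\psi$ is represented by the Jacobian matrix $J_{\mathbf{x}}(\psi)$ with polynomial entries, so each $J_{\mathbf{x}}(\psi)\,\tau_i^\alpha(\mathbf{x})$ is a vector in $\mathbb{R}^n$ with polynomial coordinates in $\mathbf{x}$. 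On $\mathbb{R}^n \times \mathrm{M}_\alpha$ the incidence condition becomes the finite system
\[
\bigl\langle \psi(\mathbf{x}) - \mathbf{b},\; J_{\mathbf{x}}(\psi)\,\tau_i^\alpha(\mathbf{x}) \bigr\rangle = 0, \qquad i = 1, \dots, m,
\]
which is a system of polynomial equations in $(\mathbf{b}, \mathbf{x})$. Hence $Z \cap (\mathbb{R}^n \times \mathrm{M}_\alpha)$ is Zariski-closed in $\mathbb{R}^n \times \mathrm{M}_\alpha$. Because the span of $\{\tau_i^\alpha(\mathbf{x})\}$ equals $\mathsf{T}_{\mathrm{M}}(\mathbf{x})$, orthogonality against all the $J_{\mathbf{x}}(\psi)\tau_i^\alpha$ is equivalent to orthogonality against all of $\operatorname{d}\psi(\mathsf{T}_{\mathrm{M}}(\mathbf{x}))$, so this is genuinely $Z$ restricted to the chart, independent of the rank-jumping phenomenon.

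Finally I would glue: $Z$ is a subset of $\mathbb{R}^n \times \mathrm{M}$ whose intersection with each member of a finite open cover is closed in that member, hence $Z$ is closed in $\mathbb{R}^n \times \mathrm{M}$, and a closed subset of a variety is a variety. The main obstacle, and the only place that needs care, is the first step — producing the regular local tangent frames $\tau_i^\alpha$ on Zariski-open pieces and arguing that finitely many such pieces cover $\mathrm{M}$; this is where smoothness of $\mathrm{M}$ (local freeness of the tangent sheaf plus quasi-compactness) does the real work, and one should be mildly careful that "variety" here is meant in the sense compatible with the rest of the paper (a locally closed, or closed, subset of affine/Euclidean space cut out by polynomials), so that the local-to-global gluing of the closedness statement is legitimate. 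Everything after that is bookkeeping with Jacobians, which are polynomial because $\psi$ is.
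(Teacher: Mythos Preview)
Your argument is correct and takes a genuinely different route from the paper's. The paper lifts the problem to $\mathbb{R}^n \times \mathrm{M} \times \bigsqcup_{k=0}^n \Gr(n,k)$: it introduces the map $\Psi(\mathbf{x}) = (\operatorname{d}\psi(\mathsf{T}_{\mathrm{M}}(\mathbf{x})))^{\perp}$ into the disjoint union of Grassmannians, writes $Z$ as the intersection of $\mathbb{R}^n \times \operatorname{Graph}(\Psi)$ with the universal incidence set $\{\psi(\mathbf{x}) - \mathbf{b} \in \mathbb{V}\}$ cut out by wedge equations, and then projects back. The rank-jumping of $\operatorname{d}\psi$ is absorbed by allowing $\mathbb{V}$ to live in Grassmannians of varying dimension. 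By contrast, you stay on $\mathbb{R}^n \times \mathrm{M}$ and handle the rank-jumping by framing the \emph{domain} bundle $\mathsf{T}_{\mathrm{M}}$ (which has constant rank) rather than its image, and then writing down the orthogonality conditions explicitly against the pushed-forward frame. Your approach is more elementary and yields explicit local equations; the paper's is coordinate-free but leans on the assertion that $\operatorname{Graph}(\Psi)$ is a variety, which is itself a statement of roughly the same depth as what you prove directly. Either way the essential content is the same: smoothness of $\mathrm{M}$ makes the tangent data vary algebraically, and polynomiality of $\psi$ makes the Jacobian polynomial.
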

\begin{proof}
We consider the following two sets:
\begin{align*}
Z_1 &= \mathbb{R}^n \times \left\lbrace (\mathbf{x}, \mathbb{V}) \in  \mathrm{M} \times \bigsqcup_{k=0}^n \Gr(n,k): \mathbb{V} = (\operatorname{d}\psi ( \mathsf{T}_{\mathrm{M}}(\mathbf{x}) ))^\perp \right\rbrace, \\
Z_2 &= \left\lbrace (\mathbf{b},\mathbf{x}, \mathbb{V}) \in \mathbb{R}^n \times \mathrm{M} \times \bigsqcup_{k=0}^n \Gr(n,k): \psi(\mathbf{x}) - \mathbf{b} \in \mathbb{V} \right\rbrace,
\end{align*}
where $\Gr(n,k)$ is the Grassmannian variety of $k$-dimensional subspaces in $\mathbb R^n$.  We clearly have $Z \simeq Z_1 \cap Z_2$.  Let $\Psi$ be the map defined by
\[
\Psi: \mathrm{M} \to \bigsqcup_{k=0}^n \Gr(n,k),\quad \Psi(\mathbf{x}) =\left( \operatorname{d}\psi ( \mathsf{T}_{\mathrm{M}}(\mathbf{x})) \right)^{\perp}.
\]
Then the graph $\operatorname{Graph}(\Psi)$ is a variety and so is $Z_1 = \mathbb{R}^n \times \operatorname{Graph}(\Psi)$.  We also notice that $Z_2$ is the variety defined by
\[
(\psi(\mathbf{x}) - \mathbf{b})\wedge \mathbf v_1 \wedge \cdots \wedge \mathbf v_k = 0,
\]
where $k = \dim \mathbb{V}$ and $\mathbf v_1,\dots, \mathbf v_k$ is any basis of $\mathbb{V}$.  Therefore,  $Z$ is also a variety.
\end{proof}
Because of Lemma~\ref{lem:Zvariety}, we call $Z$ the \emph{incidence variety}.  Let $Y$ be a subset of $\mathrm{M}$. We consider the diagram in Figure~\ref{fig:incidence variety},
  \begin{figure}[htbp]
    \centering
    \begin{tikzcd}
  Z = \lbrace
  (\mathbf{b}, \mathbf x) \in \mathbb{R}^n \times \mathrm{M}: (\psi(\mathbf x) - \mathbf{b} ) \perp \dd\psi( \mathsf{T}_{\mathrm{M}}(\mathbf{x}) )
  \rbrace  \arrow{d}{\pi_1}  \arrow{rd}{\pi_2}
  \\
     \mathbb{R}^{n} & \mathrm{M}
    \end{tikzcd}
\caption{Incidence variety and its projections}
\label{fig:incidence variety}
  \end{figure}
where $\pi_1,\pi_2$ are the natural projection maps from $Z$ onto its first and second factors respectively. The following lemma is straightforward.
\begin{lemma}\label{lem:codimpi2}
For each $\mathbf x\in \mathrm{M}$,  $  \pi_2^{-1}(\mathbf x)$ is the affine linear subspace $\{\psi(\mathbf x)+(\dd\psi( \mathsf{T}_{\mathrm{M}}(\mathbf{x}) )^\perp\}\times\{\mathbf x\}$ and $n - \dim \left( \pi_2^{-1}(\mathbf{x}) \right) = \operatorname{dim}(\dd\psi( \mathsf{T}_{\mathrm{M}}(\mathbf{x}) ) )$.
\end{lemma}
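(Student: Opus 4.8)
The plan is to unwind the definitions of $Z$ and $\pi_2$ and identify the fiber as an affine translate of a linear subspace; everything then follows from elementary linear algebra. First I would fix $\mathbf{x} \in \mathrm{M}$ and note that, by the definition of $Z$, a pair $(\mathbf{b}, \mathbf{x}') \in \mathbb{R}^n \times \mathrm{M}$ lies in $\pi_2^{-1}(\mathbf{x})$ if and only if $\mathbf{x}' = \mathbf{x}$ and the orthogonality relation $(\psi(\mathbf{x}) - \mathbf{b}) \perp \dd\psi(\mathsf{T}_{\mathrm{M}}(\mathbf{x}))$ holds. The latter is precisely the statement that $\psi(\mathbf{x}) - \mathbf{b}$ lies in the linear subspace $\bigl(\dd\psi(\mathsf{T}_{\mathrm{M}}(\mathbf{x}))\bigr)^{\perp} \subseteq \mathbb{R}^n$; since this subspace is closed under negation, it is equivalent to $\mathbf{b} \in \psi(\mathbf{x}) + \bigl(\dd\psi(\mathsf{T}_{\mathrm{M}}(\mathbf{x}))\bigr)^{\perp}$. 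Hence $\pi_2^{-1}(\mathbf{x}) = \{\psi(\mathbf{x}) + (\dd\psi(\mathsf{T}_{\mathrm{M}}(\mathbf{x})))^{\perp}\} \times \{\mathbf{x}\}$, which is an affine linear subspace, being a translate of a linear subspace of $\mathbb{R}^n$.

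For the dimension identity I would invoke the standard fact that $\dim W^{\perp} = n - \dim W$ for any linear subspace $W \subseteq \mathbb{R}^n$, applied with $W = \dd\psi(\mathsf{T}_{\mathrm{M}}(\mathbf{x}))$. Because an affine translate has the same dimension as its underlying linear subspace, this gives $\dim\bigl(\pi_2^{-1}(\mathbf{x})\bigr) = \dim\bigl((\dd\psi(\mathsf{T}_{\mathrm{M}}(\mathbf{x})))^{\perp}\bigr) = n - \dim\bigl(\dd\psi(\mathsf{T}_{\mathrm{M}}(\mathbf{x}))\bigr)$, which rearranges to the asserted equality $n - \dim\bigl(\pi_2^{-1}(\mathbf{x})\bigr) = \dim\bigl(\dd\psi(\mathsf{T}_{\mathrm{M}}(\mathbf{x}))\bigr)$.

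There is essentially no obstacle in this argument; it uses only the definitions and a basic orthogonal-complement dimension count, with no appeal to the earlier lemmas. The one point worth keeping in mind — already flagged in the paragraph preceding the lemma — is that $\dim\bigl(\dd\psi(\mathsf{T}_{\mathrm{M}}(\mathbf{x}))\bigr)$, i.e. the rank of $J_{\mathbf{x}}(\psi)$ on the tangent space, may vary with $\mathbf{x}$, so the fibers of $\pi_2$ need not all have the same dimension; this does not affect the pointwise statement being proved here.
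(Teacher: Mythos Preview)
Your proof is correct and is exactly the straightforward unwinding of definitions the paper has in mind; in fact the paper states the lemma without proof, simply remarking that it is ``straightforward.'' There is nothing to add or compare.
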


We also need the following classical result in algebraic geometry.
 \begin{lemma}\cite{mustata}\label{lem:irreducibility}
   Let $f : X\rightarrow Y$ be a morphism of algebraic varieties. Suppose that $Y$ is irreducible, and that all fibers of $f$ are irreducible,  of the same dimension $d$. We have
   \begin{enumerate}[label = (a)]
    \item There is a unique irreducible component of $X$ that dominates $Y$.
    \item Every irreducible component $X_i$ of $X$ is a union of fibers of $f$, and its dimension is $\operatorname{dim}(\overline{f(X_i)})+d$. 
   \end{enumerate}
  \end{lemma}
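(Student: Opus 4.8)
\emph{Plan.} I will work with the (finitely many) irreducible components $X = X_1 \cup \cdots \cup X_r$ of $X$ and, for each $i$, with the locus $T_i \coloneqq \{\, y \in Y : f^{-1}(y) \subseteq X_i \,\}$. Two elementary observations drive everything. First, since every fiber is nonempty, $f$ is surjective. Second, since every fiber is irreducible and $f^{-1}(y) = \bigcup_i \bigl( f^{-1}(y) \cap X_i \bigr)$ expresses it as a finite union of closed subsets, each fiber is entirely contained in one component; hence $Y = \bigcup_i T_i$.

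For part (a), surjectivity gives $Y = \bigcup_i \overline{f(X_i)}$, and irreducibility of $Y$ produces a component $X_{i_0}$ with $\overline{f(X_{i_0})} = Y$, proving existence. For uniqueness I will use that each $T_i$ is constructible, because $T_i = Y \setminus f(X \setminus X_i)$ and $f(X \setminus X_i)$ is constructible by Chevalley's theorem. Since $Y$ is irreducible and $Y = \bigcup_i T_i$, some $T_{i_0}$ is dense, hence contains a dense open $U_0 \subseteq Y$, so $f^{-1}(U_0) \subseteq X_{i_0}$. If $X_a$ dominates $Y$, then $(f|_{X_a})^{-1}(U_0)$ is a nonempty open, hence dense, subset of $X_a$ lying in $X_{i_0}$; taking closures yields $X_a \subseteq X_{i_0}$, so $X_a = X_{i_0}$.

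For part (b) I will induct on $\dim Y$; when $\dim Y = 0$ the single fiber is $X$ and everything is immediate. In the inductive step let $X_1$ be the unique dominating component from (a), with $f^{-1}(U_0) \subseteq X_1$ as above; then $f^{-1}(U_0)$ is dense in $X_1$ (it is a nonempty open subset of the irreducible $X_1$), and over a generic $y \in U_0$ the fiber of $f|_{X_1}$ equals the full fiber $f^{-1}(y)$ of dimension $d$, so $\dim X_1 - \dim Y = d$, i.e. $\dim X_1 = \dim \overline{f(X_1)} + d$. Next, by the theorem on the dimension of fibers, for each $y \in f(X_1)$ every component of $f^{-1}(y) \cap X_1$ has dimension at least $\dim X_1 - \dim Y = d$; such a closed subset of the irreducible $d$-dimensional set $f^{-1}(y)$ must be all of $f^{-1}(y)$, so $f^{-1}(y) \subseteq X_1$ for every $y \in f(X_1)$, which is exactly the statement that $X_1$ is a union of fibers.

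To finish (b) I treat a component $X_j$ with $j \neq 1$. Uniqueness in (a) gives $Y_j \coloneqq \overline{f(X_j)} \subsetneq Y$, so $\dim Y_j < \dim Y$, and I will apply the inductive hypothesis to $f \colon f^{-1}(Y_j) \to Y_j$, whose fibers are the same irreducible $d$-dimensional sets $f^{-1}(y)$ over the irreducible base $Y_j$. For this I first check that $X_j$ is an irreducible component of $f^{-1}(Y_j)$: an irreducible subset properly containing $X_j$ inside $f^{-1}(Y_j)$ would be contained in some $X_k$, forcing $X_j \subsetneq X_k$. The induction then gives that $X_j$ is a union of fibers of $f|_{f^{-1}(Y_j)}$, hence of $f$, and $\dim X_j = \dim Y_j + d = \dim \overline{f(X_j)} + d$. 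The step I expect to be the crux is showing in the inductive step that the dominating component $X_1$ contains \emph{every} fiber it meets, not merely a dense family of them: this is precisely where the pointwise lower bound $\dim\bigl( f^{-1}(y) \cap X_1 \bigr) \geq \dim X_1 - \dim Y$ from the fiber-dimension theorem is indispensable, since a priori a special fiber $f^{-1}(y)$ (irreducible of dimension $d$) could meet $X_1$ in a proper closed subset. The ancillary facts — constructibility of $f(X \setminus X_i)$ and the passage from a dense constructible set to a dense open subset in an irreducible variety — are standard.
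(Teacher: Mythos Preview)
The paper does not give its own proof of this lemma; it is quoted as a known result from Mustata's lecture notes and used as a black box in the proof of Theorem~\ref{thm:generic-critical-location}. So there is no argument in the paper to compare your proposal against.

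That said, your proof is correct and is essentially the standard one. The main ingredients are exactly what you identify: Chevalley's constructibility theorem to see that each $T_i = Y \setminus f(X \setminus X_i)$ is constructible, the fact that a finite constructible cover of an irreducible variety has a dense member (hence contains a dense open), and the lower bound on fiber dimension for a dominant morphism between irreducible varieties to force $f^{-1}(y)\cap X_1 = f^{-1}(y)$ for every $y\in f(X_1)$. The induction on $\dim Y$ to handle the non-dominating components is clean, and your verification that $X_j$ remains an irreducible component after restricting to $f^{-1}(Y_j)$ is the right way to set up the recursive step. Nothing is missing.
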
 
Lemma~\ref{lem:irreducibility} says that under the hypothesis of irreducibility and constant dimension,  each fiber is either contained in a given irreducible component of $X$,  or it does not intersect the component at all.

Although both $\mathrm{M}$ and $\psi$ are smooth,  $\psi(\mathrm{M})$ is very likely to have singularities.  Therefore,  the optimization problem
\begin{equation*}
  \begin{array}{rl}
  \min&\frac{1}{2}\|\mathbf b-\mathbf y\|^2\\
  \text{s.t.}&\mathbf y\in \psi(\mathrm M)
  \end{array}
\end{equation*}  
obtained by reformulating \eqref{eq:projection-general1} is a projection problem to a set with singularities.  It is extremely difficult,  if ever possible,  to deal with the projection problem for an arbitrary singular set.  However,  if $\mathrm{M}$ admits a stratification,  with which $\psi$ is compatible (in the sense of Assumption~\ref{assump:setm}),  then critical points of \eqref{eq:projection-general1} are tractable.  
\begin{assumption}\label{assump:setm}
  Given a smooth variety $\mathrm{M}$ and a subvariety $\mathrm{M}_1$, there exists a collection of locally closed subvarieties  $\{\mathrm{M}^{(s)}\}_{s=0}^q$ satisfying:
  \begin{enumerate}[label=\alph*)]
  \item $\mathrm{M}  = \bigsqcup_{s=0}^q \mathrm{M}^{(s)}$.\label{prop:generic-critical-location:item:a}
  \item $\dd \psi$ has constant rank $\delta_s$ on  $\mathrm{M}^{(s)}, 0\le s \le q$.\label{prop:generic-critical-location:item:b}
  \item For each $0 \le s \le q$, one of the following three conditions holds.
  \begin{enumerate}[label=\roman*)]
  \item $\mathrm{M}^{(s)}$ is a submanifold of $\mathrm{M}$, $\psi(\mathrm{M}^{(s)})$ is a submanifold of $\mathbb{R}^n$ and $\dim\psi (\mathrm{M}_1 \cap \mathrm{M}^{(s)}) < \dim \psi (\mathrm{M}^{(s)})$. \label{prop:generic-critical-location:item:c1}
  \item $\dim (\psi (\mathrm{M}_1 \cap \mathrm{M}^{(s)}) ) < \delta_s$ and for any $\mathbf{x},\mathbf{x}'\in \mathrm{M}_1 \cap  \mathrm{M}^{(s)}$ such that $\psi(\mathbf{x}) = \psi(\mathbf{x}')$, we have
  \[
  \operatorname{d}\psi ( \mathsf{T}_{\mathrm{M}}(\mathbf{x}) ) = \operatorname{d}\psi ( \mathsf{T}_{\mathrm{M}}(\mathbf{x}')  ).
  \]
  \label{prop:generic-critical-location:item:c2}
  \item $\dim \mathrm{M}^{(s)} < \delta_s $. \label{prop:generic-critical-location:item:c3}
  \end{enumerate}
  \label{prop:generic-critical-location:item:c}
  \end{enumerate}
\end{assumption}
\begin{theorem}[Location theorem]\label{thm:generic-critical-location}
Assume that the triplet $(\mathrm{M},\mathrm{M}_1, \psi)$ satisfies Assumption~\ref{assump:setm}. 
Then for a generic $\mathbf{b}\in \mathbb{R}^n$, critical points of \eqref{eq:projection-general1} are all contained in $\mathrm{M} \setminus \mathrm{M}_1$.
\end{theorem}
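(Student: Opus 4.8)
The plan is to bound the dimension of the \emph{exceptional locus}
\[ B \coloneqq \pi_1\!\left( Z \cap \bigl(\mathbb{R}^n \times \mathrm{M}_1\bigr) \right) \subseteq \mathbb{R}^n , \]
and show $\dim \overline{B} < n$. Since $Z$ is a variety (Lemma~\ref{lem:Zvariety}) and $\mathrm{M}_1$ is a subvariety of $\mathrm{M}$, the set $Z \cap (\mathbb{R}^n \times \mathrm{M}_1)$ is locally closed and $B$ is constructible by Chevalley's theorem; thus $\overline{B}$ is a proper subvariety once $\dim\overline{B}<n$, a generic $\mathbf{b}$ lies outside it, and for such $\mathbf{b}$ no critical point of \eqref{eq:projection-general1} lies in $\mathrm{M}_1$, by \eqref{eq:critical point} and the definition of $Z$. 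By Assumption~\ref{assump:setm}\ref{prop:generic-critical-location:item:a} we write $\mathrm{M}_1 = \bigsqcup_{s=0}^{q}(\mathrm{M}_1 \cap \mathrm{M}^{(s)})$, so it suffices to prove $\dim \pi_1(Z^{(s)}) < n$ for each $s$, where $Z^{(s)} \coloneqq Z \cap \bigl(\mathbb{R}^n \times (\mathrm{M}_1 \cap \mathrm{M}^{(s)})\bigr)$. By Lemma~\ref{lem:codimpi2} and Assumption~\ref{assump:setm}\ref{prop:generic-critical-location:item:b}, the projection $\pi_2\colon Z^{(s)} \to \mathrm{M}_1 \cap \mathrm{M}^{(s)}$ is surjective with every fibre an affine space of dimension $n-\delta_s$, so $\dim Z^{(s)} \le \dim(\mathrm{M}_1 \cap \mathrm{M}^{(s)}) + (n-\delta_s)$; sharper bounds on $\dim\pi_1(Z^{(s)})$ will come from parametrizing the image through $\psi$.

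I would then split according to Assumption~\ref{assump:setm}\ref{prop:generic-critical-location:item:c}. Under \ref{prop:generic-critical-location:item:c3}, $\dim(\mathrm{M}_1 \cap \mathrm{M}^{(s)}) \le \dim \mathrm{M}^{(s)} < \delta_s$, so already $\dim Z^{(s)} < n$. Under \ref{prop:generic-critical-location:item:c2}, the hypothesis says that on $\mathrm{M}_1 \cap \mathrm{M}^{(s)}$ the subspace $\operatorname{d}\psi(\mathsf{T}_{\mathrm{M}}(\mathbf{x}))$ — hence the affine fibre $\pi_2^{-1}(\mathbf{x}) = \psi(\mathbf{x}) + \operatorname{d}\psi(\mathsf{T}_{\mathrm{M}}(\mathbf{x}))^{\perp}$ — depends only on $\psi(\mathbf{x})$; therefore $\pi_1(Z^{(s)})$ is the union, over $\mathbf{y} \in \psi(\mathrm{M}_1 \cap \mathrm{M}^{(s)})$, of a single affine $(n-\delta_s)$-space attached to $\mathbf{y}$, i.e.\ the image of a constructible set fibred over $\psi(\mathrm{M}_1 \cap \mathrm{M}^{(s)})$ with $(n-\delta_s)$-dimensional fibres, so $\dim\pi_1(Z^{(s)}) \le \dim\psi(\mathrm{M}_1\cap\mathrm{M}^{(s)}) + (n-\delta_s) < \delta_s + (n-\delta_s) = n$.

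The remaining case \ref{prop:generic-critical-location:item:c1} is, I expect, the main obstacle. Put $N^{(s)} \coloneqq \psi(\mathrm{M}^{(s)})$, a submanifold of $\mathbb{R}^n$. The decisive fact is
\[ \mathsf{T}_{N^{(s)}}\!\bigl(\psi(\mathbf{x})\bigr) \subseteq \operatorname{d}\psi\bigl(\mathsf{T}_{\mathrm{M}}(\mathbf{x})\bigr)\qquad\text{for all } \mathbf{x}\in\mathrm{M}^{(s)} . \]
To prove it, observe that on the dense Zariski-open subset of $\mathrm{M}^{(s)}$ where $\operatorname{d}(\psi|_{\mathrm{M}^{(s)}})$ has maximal rank $\dim N^{(s)}$ its image is exactly $\mathsf{T}_{N^{(s)}}(\psi(\mathbf{x}))$, so the inclusion holds there; and the inclusion is a closed condition on $\mathbf{x}\in\mathrm{M}^{(s)}$, because $\mathbf{x}\mapsto\operatorname{d}\psi(\mathsf{T}_{\mathrm{M}}(\mathbf{x}))$ is continuous with constant dimension $\delta_s$ on $\mathrm{M}^{(s)}$ (Assumption~\ref{assump:setm}\ref{prop:generic-critical-location:item:b}) and $\mathbf{y}\mapsto\mathsf{T}_{N^{(s)}}(\mathbf{y})$ is continuous with constant dimension $\dim N^{(s)}$ — and a closed subset of $\mathrm{M}^{(s)}$ containing a dense subset is all of $\mathrm{M}^{(s)}$. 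Granting this, whenever $(\mathbf{b},\mathbf{x})\in Z^{(s)}$ we get $\mathbf{b}-\psi(\mathbf{x})\perp\mathsf{T}_{N^{(s)}}(\psi(\mathbf{x}))$, i.e.\ $\psi(\mathbf{x})$ is a critical point of the Euclidean projection of $\mathbf{b}$ onto $N^{(s)}$; hence $\pi_1(Z^{(s)})$ is contained in the $\pi_1$-image of the incidence variety $\{(\mathbf{b},\mathbf{y})\in\mathbb{R}^n\times\psi(\mathrm{M}_1\cap\mathrm{M}^{(s)}) : \mathbf{b}-\mathbf{y}\perp\mathsf{T}_{N^{(s)}}(\mathbf{y})\}$, whose fibres over the $\mathbf{y}$-factor are affine of dimension $n-\dim N^{(s)}$, so $\dim\pi_1(Z^{(s)}) \le \dim\psi(\mathrm{M}_1\cap\mathrm{M}^{(s)}) + (n-\dim N^{(s)}) < n$ by the strict inequality $\dim\psi(\mathrm{M}_1\cap\mathrm{M}^{(s)}) < \dim\psi(\mathrm{M}^{(s)}) = \dim N^{(s)}$ in~\ref{prop:generic-critical-location:item:c1}. (Alternatively one may invoke Lemma~\ref{lem:generic-morse}: for generic $\mathbf{b}$ the projection onto $N^{(s)}$ has finitely many critical points, and a further genericity reduction keeps them off the lower-dimensional set $\psi(\mathrm{M}_1\cap\mathrm{M}^{(s)})$.)

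Combining the three cases, a generic $\mathbf{b}$ outside the proper subvariety $\bigcup_{s=0}^{q}\overline{\pi_1(Z^{(s)})}$ has no critical point of \eqref{eq:projection-general1} in $\mathrm{M}_1$, which is the assertion. Apart from case~\ref{prop:generic-critical-location:item:c1}, the only tools needed are standard: Chevalley's theorem and the fibre-dimension inequality for morphisms of (real) varieties, Lemma~\ref{lem:irreducibility} to reduce to irreducible components when a stratum $\mathrm{M}^{(s)}$ is reducible, and the passage from $\dim\overline{B}<n$ to genericity of $\mathbf{b}$.
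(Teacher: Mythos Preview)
Your proof is correct and follows the same overall strategy as the paper: reduce to each stratum $\mathrm{M}_1\cap\mathrm{M}^{(s)}$ and bound $\dim\pi_1(Z^{(s)})<n$ according to which alternative of Assumption~\ref{assump:setm}\ref{prop:generic-critical-location:item:c} holds. The one notable methodological difference is in case~\ref{prop:generic-critical-location:item:c2}: the paper argues by contradiction, passing via Lemma~\ref{lem:irreducibility} to an irreducible component $\mathrm{C}\subseteq\pi_2^{-1}(\mathrm{M}_1\cap\mathrm{M}^{(s)})$ with $\dim\pi_1(\mathrm{C})=n$ and then using the constancy of $\operatorname{d}\psi(\mathsf{T}_{\mathrm{M}}(\mathbf{x}))$ along $\psi$-fibres to bound $\dim(\pi_1^{-1}(\mathbf{b})\cap\mathrm{C})$ from below and force a contradiction through the fibre-dimension formula; you instead use that same constancy hypothesis to factor $\pi_1(Z^{(s)})$ directly through a constructible set fibred over $\psi(\mathrm{M}_1\cap\mathrm{M}^{(s)})$ with $(n-\delta_s)$-dimensional affine fibres, giving $\dim\pi_1(Z^{(s)})\le\dim\psi(\mathrm{M}_1\cap\mathrm{M}^{(s)})+(n-\delta_s)<n$ without the contradiction detour --- this is shorter and arguably cleaner, while the paper's version makes the role of Lemma~\ref{lem:irreducibility} more visible. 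Your treatment of case~\ref{prop:generic-critical-location:item:c1} (the density-plus-closed-condition argument for the inclusion $\mathsf{T}_{\psi(\mathrm{M}^{(s)})}(\psi(\mathbf{x}))\subseteq\operatorname{d}\psi(\mathsf{T}_{\mathrm{M}}(\mathbf{x}))$) is also more careful than the paper's one-line assertion of the equality $\mathsf{T}_{\psi(\mathrm{M}^{(s)})}(\mathbf{y})=\operatorname{d}\psi|_{\mathrm{M}^{(s)}}(\mathsf{T}_{\mathrm{M}^{(s)}}(\mathbf{x}))$.
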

\begin{proof}
We notice that $\pi_1(\pi_2^{-1}(\mathrm{M}_1))$ consists of $\mathbf{b}\in \mathbb{R}^n$ such that \eqref{eq:projection-general1} has a critical point in $\mathrm{M}_1$. Thus critical points of a generic $\mathbf{b} \in \mathbb{R}^n$ all lie in $\mathrm{M}\setminus \mathrm{M}_1$ if and only if $\dim \pi_1(\pi_2^{-1}(\mathrm{M}_1)) < n$. Consequently, it suffices to prove
  \[
  \dim (\pi_1(\pi_2^{-1}( \mathrm{M}_1 \cap \mathrm{M}^{(s)} ))) < n,\quad 0 \le s \le q.
  \]

We first assume that condition \ref{prop:generic-critical-location:item:c1} in \ref{prop:generic-critical-location:item:c} holds.  We consider for each $\mathbf{b}\in \mathbb{R}^n$ the squared distance function:
\[
\operatorname{dist}^2(\mathbf{b},\cdot): \psi(\mathrm{M}^{(s)}) \to \mathbb{R},\quad \operatorname{dist}^2(\mathbf{b}, \mathbf{y}) =  \lVert \mathbf{y} - \mathbf{b} \rVert^2.
\]
Since $\psi(\mathrm{M}^{(s)})$ is a submanifold of $\mathbb{R}^n$,  $\operatorname{dist}^2(\mathbf{b},\cdot)$ is a smooth function on $\psi(\mathrm{M}^{(s)})$.  We notice that $\mathbf{y} \in \psi(\mathrm{M}^{(s)})$ is a critical point of $\operatorname{dist}^2(\mathbf{b},\cdot)$ if and only if $(\mathbf{y} - \mathbf{b} ) \perp \mathsf{T}_{  \psi (\mathrm{M}^{(s)}) }(\mathbf{y})$. In particular, we have
\begin{equation}\label{eq:critical-part}
\pi_1(\pi_2^{-1} (\mathrm{M}_1 \cap \mathrm{M}^{(s)})) \subseteq  \{\mathbf{b}\in \mathbb{R}^n: \operatorname{dist}^2(\mathbf{b},\cdot)~\text{has a crtical point in}~\psi(\mathrm{M}_1 \cap \mathrm{M}^{(s)})\},
\end{equation}
as $\mathsf{T}_{\psi (\mathrm{M}^{(s)})} (\mathbf{y}) = \operatorname{d}\psi|_{\mathrm{M}^{(s)}} (\mathsf{T}_{ \mathrm{M}^{(s)}}(\mathbf{x})) \subseteq \operatorname{d}\psi (\mathsf{T}_{\mathrm{M}^{(s)}}(\mathbf{x})) \subseteq \operatorname{d}\psi (\mathsf{T}_{\mathrm{M}}(\mathbf{x}))$ whenever $\mathbf{x}\in  \mathrm{M}^{(s)}$ and $\psi(\mathbf{x}) = \mathbf{y}$. Therefore,
\begin{align*}
\dim \pi_1(\pi_2^{-1} ( \mathrm{M}_1 \cap \mathrm{M}^{(s)})) & \le \dim \psi(\mathrm{M}_1 \cap \mathrm{M}^{(s)}) +\dim (\mathsf{T}_{\psi (\mathrm{M}^{(s)})}(\mathbf{y}))^\perp \\
&=  \dim \psi(\mathrm{M}_1 \cap \mathrm{M}^{(s)}) + n - \dim \mathsf{T}_{\psi(\mathrm{M}^{(s)})} ( \mathbf{y}) \\
&< n,
\end{align*}
where the first inequaltiy follows from \eqref{eq:critical-part}.

Next we address the case where condition \ref{prop:generic-critical-location:item:c2} or \ref{prop:generic-critical-location:item:c3} of \ref{prop:generic-critical-location:item:c} holds.  We prove by contradiction. Suppose that $\dim (\pi_1(\pi_2^{-1}(\mathrm{M}_1 \cap \mathrm{M}^{(s)}))) = n$ for some $0 \le s \le q$. For simplicity, we denote $\mathrm{Y} \coloneqq \mathrm{M}_1 \cap \mathrm{M}^{(s)}$ so that $\dim (\pi_1(\pi_2^{-1}(\mathrm{Y}))) = n$.  Without loss of generality, we assume that $\mathrm{Y}$ is irreducible.  Otherwise,  one can decompose $\mathrm{Y}$ into the union of irreducible components and analyze each component by the same argument. Then by Lemma~\ref{lem:irreducibility},  there exists an irreducible component $\mathrm{C}$ of $\pi_2^{-1}( \mathrm{Y} )$ such that $\dim \pi_1(\mathrm{C}) = n$. 

If \ref{prop:generic-critical-location:item:c3} holds,  then Lemma~\ref{lem:codimpi2} implies that $\pi_2^{-1}(\mathbf{x})$ is an affine linear subspace of codimension $\delta_s>\dim \mathrm{M}^{(s)}$ for any $\x\in \mathrm{M}^{(s)}$.  By definition, $\mathrm{C}$ is contained in some irreducible component of $\pi_2^{-1}(\mathrm{M}^{(s)})$.  But this leads to a contradiction
\[
  n = \dim (\pi_1 (\mathrm{C}) ) \leq \dim \mathrm{C} \le  \dim \mathrm{M}^{(s)} + (n- \delta_s) < n.
\]

Lastly,  we suppose \ref{prop:generic-critical-location:item:c2} holds. According to Lemma~\ref{lem:irreducibility},  we have $\mathrm{C} =\pi_2^{-1} ( \pi_2(\mathrm{C}))$ and
  \begin{align}\label{prop:generic-critical-location:eq:1}
   n &= \dim \mathrm{C} - \dim (\pi_1^{-1}(\mathbf{b}) \cap \mathrm{C}) \nonumber \\
   &= \dim \pi_2 (\mathrm{C}) + n -\delta_s - \dim (\pi_1^{-1}(\mathbf{b}) \cap \mathrm{C}),
  \end{align}
where $\mathbf{b}$ is a generic point in $\mathbb{R}^n$. We recall that
  \begin{equation}\label{prop:generic-critical-location:eq:2}
  \pi_1^{-1}(\mathbf{b}) \cap \mathrm{C} = \{(\mathbf{b},\mathbf x)\in \mathrm{C}: (\psi(\mathbf x) - \mathbf{b} ) \perp \dd\psi(\mathsf{T}_{\mathrm{M}}(\mathbf x)) \} \simeq \pi_2 (\pi_1^{-1}(\mathbf{b}) \cap \mathrm{C}) = \pi_2 (\pi_1^{-1}(\mathbf{b}))  \cap \pi_2 (\mathrm{C}).
  \end{equation}
We also observe that $\pi_2 (\pi_1^{-1}(\mathbf{b}))  \cap \pi_2 (\mathrm{C})$ consists of critical points of \eqref{eq:projection-general1} for $\mathbf{b}$ that are contained in $\pi_2(\mathrm{C})$. We consider $\psi|_{\pi_2(\mathrm{C})}:\pi_2(\mathrm{C}) \to \mathbb{R}^n$. It is clear that for any $\mathbf{c}\in \mathbb{R}^n$, we have
\[
(\psi|_{\pi_2(\mathrm{C})})^{-1} (\mathbf{c}) = \psi^{-1}(\mathbf{c}) \cap \pi_2(\mathrm{C}).
\]
In particular, we  pick  $\mathbf{c} \in \psi(\pi_2 (\pi_1^{-1}(\mathbf{b})) \cap \pi_2(\mathrm{C})) \subseteq \mathbb{R}^n$. Then $(\psi|_{\pi_2(\mathrm{C})})^{-1} (\mathbf{c})$ consists of all $\mathbf{x} \in \pi_2(\mathrm{C})$ such that $\psi(\mathbf{x}) = \mathbf{c}$. By definition, there exists some $\mathbf{x}_0 \in \pi_2(\mathrm{C})$ such that $\psi(\mathbf{x}_0) - \mathbf{b} \perp \operatorname{d}\psi (\mathsf{T}_{ \mathrm{M}} ( \mathbf{x}_0 ))$. Thus $(\psi|_{\pi_2(\mathrm{C})})^{-1} (\mathbf{c})$ consists of all $\mathbf{x} \in \pi_2(\mathrm{C})$ such that $\psi(\mathbf{x}) = \mathbf{c}$ and $\psi(\mathbf{x}) - \mathbf{b} \perp \operatorname{d}\psi (\mathsf{T}_{ \mathrm{M} } ( \mathbf{x} ) )$,  as $\psi(\mathbf{x}) = \psi(\mathbf{x}_0) = \mathbf{c}$ and $\operatorname{d}\psi (\mathsf{T}_{ \mathrm{M} } ( \mathbf{x} ) )= \operatorname{d}\psi (\mathsf{T}_{ \mathrm{M} } ( \mathbf{x}_0 ) )$. This implies
\[
(\psi|_{\pi_2(\mathrm{C})})^{-1} (\mathbf{c}) \subseteq \pi_2 (\pi_1^{-1}(\mathbf{b})) \cap \pi_2(\mathrm{C}).
\]
Thus we have
 \begin{equation}\label{prop:generic-critical-location:eq:3}
\dim (\pi_1^{-1}(\mathbf{b}) \cap \mathrm{C}) =   \dim \left( \pi_2 (\pi_1^{-1}(\mathbf{b})) \cap \pi_2(\mathrm{C}) \right) \ge \dim \big((\psi|_{\pi_2(\mathrm{C})})^{-1}(\mathbf{c})\big) \ge \dim \pi_2(\mathrm{C}) - \dim \psi(\pi_2(\mathrm{C})).
 \end{equation}
But then we obtain a contradiction by assembling\eqref{prop:generic-critical-location:eq:1}--\eqref{prop:generic-critical-location:eq:3}:
  \begin{align*}
n  &= n + \dim \pi_2(\mathrm{C}) - \dim (\pi_1^{-1}(\mathbf{b}) \cap \mathrm{C} ) - \delta_s \\
  &\le n + \dim \psi(\pi_2(\mathrm{C})) - \delta_s\\
  &\le n + \dim \psi(\mathrm{Y}) - \delta_s \\
  &< n.   \qedhere
  \end{align*}
  \end{proof}
  
For applications of Theorem~\ref{thm:generic-critical-location} to convergence analysis,  the set $\mathrm{M}_1$ usually captures singularities of $\psi(\mathrm{M})$ as well as some points that are difficult to analyze. A typical case is that $\mathrm{M}_1=\bigsqcup_{s=p+1}^q \mathrm{M}^{(s)}$ for some $0 \le p \le q -1$.  
\begin{corollary}\label{cor:generic-critical-location-smooth}
  Assume that $\mathrm{M}$ is a smooth variety.  Let $\{M^{(s)}\}_{s=0}^q$ be a collection of locally closed subvarieties satisfying:
  \begin{enumerate}[label=\alph*)]
  \item $\mathrm{M}  = \bigsqcup_{s=0}^q M^{(s)}$. 
  \item $\dd \psi$ has constant rank $\delta_s$ on  $M^{(s)}, 0\le s \le q$. 
  \item There exists some $0 \le p<q$ such that $\dim M^{(s)} < \delta_s$ for all $s=p+1,\dots,q$.  \label{cor:generic-critical-location-smooth:item3}
  \end{enumerate}
Then for a generic $\mathbf{b}\in \mathbb{R}^n$, critical points of \eqref{eq:projection-general} are all contained in $\bigsqcup_{s=0}^p M^{(s)}$.
  \end{corollary}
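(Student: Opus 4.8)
The plan is to deduce the corollary directly from Theorem~\ref{thm:generic-critical-location} by choosing the right subvariety $\mathrm{M}_1$ and checking that the resulting triplet satisfies Assumption~\ref{assump:setm}. Concretely, I would set
\[
\mathrm{M}_1 \coloneqq \bigsqcup_{s=p+1}^q M^{(s)},
\]
and take the given collection $\{M^{(s)}\}_{s=0}^q$ itself as the candidate stratification in Assumption~\ref{assump:setm}, i.e.\ $\mathrm{M}^{(s)}\coloneqq M^{(s)}$ for all $s$. With this choice $\mathrm{M}\setminus\mathrm{M}_1=\bigsqcup_{s=0}^p M^{(s)}$ is exactly the set appearing in the conclusion. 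Note that $\mathrm{M}_1$ is a finite union of locally closed subvarieties, hence constructible, and the dimension-counting arguments in the proof of Theorem~\ref{thm:generic-critical-location} apply verbatim to constructible sets, so this is a legitimate input.

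Next I would verify conditions a)--c) of Assumption~\ref{assump:setm}. Conditions a) and b) are immediate from hypotheses a) and b) of the corollary. For condition c), fix $0\le s\le q$ and split into two cases according to whether $s>p$ or $s\le p$, using crucially that the $M^{(t)}$ are pairwise disjoint. If $s>p$, then $\mathrm{M}_1\cap\mathrm{M}^{(s)}=M^{(s)}$, and hypothesis~\ref{cor:generic-critical-location-smooth:item3} gives $\dim\mathrm{M}^{(s)}=\dim M^{(s)}<\delta_s$; hence condition~iii) of Assumption~\ref{assump:setm}\ref{prop:generic-critical-location:item:c} holds. If $s\le p$, then $\mathrm{M}_1\cap\mathrm{M}^{(s)}=\emptyset$, so $\psi(\mathrm{M}_1\cap\mathrm{M}^{(s)})=\emptyset$ has dimension strictly smaller than $\delta_s$ (with the convention $\dim\emptyset=-\infty$, which is $<\delta_s$ since $\delta_s\ge 0$), and the condition imposed on pairs $\mathbf{x},\mathbf{x}'\in\mathrm{M}_1\cap\mathrm{M}^{(s)}=\emptyset$ is vacuously true; hence condition~ii) holds. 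In either case condition~c) is satisfied.

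Finally, applying Theorem~\ref{thm:generic-critical-location} to the triplet $(\mathrm{M},\mathrm{M}_1,\psi)$ shows that for a generic $\mathbf{b}\in\mathbb{R}^n$ all critical points of \eqref{eq:projection-general} lie in $\mathrm{M}\setminus\mathrm{M}_1=\bigsqcup_{s=0}^p M^{(s)}$, which is the desired statement.

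There is no real obstacle here: the corollary is essentially a specialization of the location theorem in which the refined hypotheses i)--iii) of Assumption~\ref{assump:setm} collapse to the single uniform dimension bound $\dim M^{(s)}<\delta_s$ on the top strata. The only points that deserve a line of care are the bookkeeping observation that pairwise disjointness of the strata forces $\mathrm{M}_1\cap M^{(s)}$ to be either all of $M^{(s)}$ or empty, and the (harmless) convention that the empty set has dimension strictly below every $\delta_s$, which is what makes condition~ii) hold vacuously on the lower strata $s\le p$.
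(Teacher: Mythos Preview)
Your proposal is correct and essentially identical to the paper's own proof: the paper also sets $\mathrm{M}_1\coloneqq\bigsqcup_{s=p+1}^q \mathrm{M}^{(s)}$, observes that $\mathrm{M}_1\cap\mathrm{M}^{(s)}$ is empty for $s\le p$ (so condition~ii) holds vacuously) and equals $\mathrm{M}^{(s)}$ for $s>p$ (so condition~iii) holds by hypothesis~\ref{cor:generic-critical-location-smooth:item3}), and then invokes Theorem~\ref{thm:generic-critical-location}. Your additional remark about $\mathrm{M}_1$ being merely constructible is a fair caveat that the paper glosses over.
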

  \begin{proof}
Let $\mathrm{M}_1 \coloneqq \bigsqcup_{s=p+1}^q \mathrm{M}^{(s)}$.  Then $\mathrm{M}_1 \cap \mathrm{M}^{(s)} = \emptyset$ for $0 \le s \le p$ and $\mathrm{M}_1 \cap \mathrm{M}^{(s)} = \mathrm{M}^{(s)}$ for $p+1 \le s \le q$.  Thus,  \ref{prop:generic-critical-location:item:c2} in Assumption~\ref{assump:setm} is satisfied for each $0 \le s \le p$.  By assumption \ref{cor:generic-critical-location-smooth:item3},  \ref{prop:generic-critical-location:item:c3} in Assumption~\ref{assump:setm} is satisfied for each $p +1 \le s \le q$.
  \end{proof}

\begin{remark}
For each integer $0\le s \le q \coloneqq \min\{m,n\}$, we define the locally closed subvariety
  \[
  \mathrm{M}^{(s)} \coloneqq \{x\in \mathrm{M}:\dim \ker (d_x \psi) = s\}.
  \]
Clearly $\{\mathrm{M}^{(s)}\}_{j=0}^{q}$ satisfies conditions \ref{prop:generic-critical-location:item:a} and \ref{prop:generic-critical-location:item:b} of Assumption~\ref{assump:setm} and Corollary~\ref{cor:generic-critical-location-smooth}.  We will see in the next section that condition~\ref{prop:generic-critical-location:item:c} is also satisfied in many applications.
\end{remark}
 \section{Examples}\label{sec:examples}
As applications of Theorem~\ref{thm:generic-critical-location},  we present some examples in this section.  Given a smooth variety $\mathrm{M}$,  its subvariety $\mathrm{M}_1$ and a polynomial map $\psi: \mathrm{M} \to \mathbb{R}^n$,  we will explicitly construct a collection $\{ \mathrm{M}^{(s)} \}_{s=0}^q$ of locally closed subvarieties that satisfies conditions \ref{prop:generic-critical-location:item:a}--\ref{prop:generic-critical-location:item:c} of Theorem~\ref{thm:generic-critical-location}.  This implies that for a generic $\mathbf{b}\in \mathbb{R}^n$,   critical points of the problem
\begin{equation}\label{eq:projection-general2}
\begin{array}{rl}
\min&\frac{1}{2}\|\mathbf b-\psi(\mathbf x)\|^2\\
\text{s.t.}&\mathbf x\in \mathrm{M}
\end{array}
\end{equation}
are all contained in $\mathrm{M} \setminus \mathrm{M}_1$.   
\subsection{Projection onto a hyperboloid}
We recall that the hyperboloid $x z - y^2 = 0$ in the orthant $\{(x,y,z)\in \mathbb{R}^3: x > 0,\ z>0\}$ can be parametrized by 
\begin{equation}\label{eq:psi-quadric}
\psi: \mathrm{M} \coloneqq \mathbb{R}_{\ast}^2 \to \mathbb{R}^3, \quad \psi(s,t) = (s^2, s^3t, s^4t^2),
\end{equation}
where $\mathbb{R}_\ast \coloneqq \mathbb{R} \setminus \{0\}$ and $\mathbb{R}_\ast^k:=(\mathbb{R}_{\ast})^k$.  In this case,  \eqref{eq:projection-general2} is the projection problem onto the one-sheeted hyperboloid:
\begin{equation}\label{eq:projection-hyperboloid}
\begin{array}{rl}
\min&\frac{1}{2}\left[ (b_1 - s^2)^2 + (b_2 - s^3t)^2 + (b_3 - s^4t^2)^2 \right] \\
\text{s.t.}& s,t \neq 0,
\end{array}
\end{equation} 
where $(b_1,b_2,b_3) \in \mathbb{R}^3$ is a given vector.  We consider $\mathrm{M}_1 \coloneqq \{(s,s)\in \mathbb{R}_{\ast}^2 \}$.  At each $(s,t)\in \mathrm{M}$, we have $\dd_{(s,t)}\psi = \begin{bsmallmatrix}
2s & 0 \\
3s^2 t & s^3 \\
4s^3 t^2 & 2 s^4 t
\end{bsmallmatrix}$.  This implies that $\rank \dd_{(s,t)}\psi =  2$ for any $(s,t)\in \mathrm{M}$ since $s \ne 0$.  We let 
\[
\mathrm{M}^{(1)} \coloneqq \mathrm{M}_1,\quad \mathrm{M}^{(0)} \coloneqq \mathrm{M} \setminus \mathrm{M}_1.
\]
Since $\dim \mathrm{M}^{(1)} = 1$, Corollary~\ref{cor:generic-critical-location-smooth} implies the following.
\begin{proposition}\label{prop:quadric surface}
Let $\psi$ and $\mathrm{M}$ be defined by \eqref{eq:psi-quadric}.  For a generic $\mathbf{b}\in \mathbb{R}^{3}$, any critical point $(s,t)$ of \eqref{eq:projection-hyperboloid} must satisfy $s \ne t$.
\end{proposition}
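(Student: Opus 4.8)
The plan is to show that the stratification $\mathrm{M} = \mathrm{M}^{(0)} \sqcup \mathrm{M}^{(1)}$ fixed in the discussion above (with $\mathrm{M}^{(1)} = \mathrm{M}_1 = \{(s,s): s\neq 0\}$ and $\mathrm{M}^{(0)} = \mathrm{M}\setminus\mathrm{M}_1$) meets all three hypotheses of Corollary~\ref{cor:generic-critical-location-smooth}, with $q = 1$ and $p = 0$. Once this is established, the corollary states that for a generic $\mathbf{b}\in\mathbb{R}^3$ every critical point of \eqref{eq:projection-hyperboloid} lies in $\bigsqcup_{s=0}^{0}\mathrm{M}^{(s)} = \mathrm{M}^{(0)}$, which is precisely the set of $(s,t)\in\mathbb{R}_{\ast}^2$ with $s\neq t$; that is the assertion of Proposition~\ref{prop:quadric surface}. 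Note that $\mathrm{M}=\mathbb{R}_{\ast}^2$ is a smooth affine variety, so the corollary applies.

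Hypothesis (a) is immediate: $\mathrm{M}_1$ is cut out of $\mathrm{M}=\mathbb{R}_{\ast}^2$ by the equation $s - t = 0$, hence is a closed (in particular locally closed) subvariety; $\mathrm{M}^{(0)}$ is its open complement; and the two sets partition $\mathrm{M}$. For hypothesis (b) I would use the Jacobian $\dd_{(s,t)}\psi$ already displayed above and check that its two columns are linearly independent at every point of $\mathrm{M}$: in any linear relation between them, the $(1,1)$ entry $2s$ forces the coefficient of the first column to vanish (since $s\neq 0$), and then the $(2,2)$ entry $s^3$ forces the coefficient of the second column to vanish as well. Hence $\rank \dd_{(s,t)}\psi \equiv 2$ on all of $\mathrm{M}$, so $\dd\psi$ has constant rank $\delta_0 = \delta_1 = 2$, as required.

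Finally, hypothesis (c) with the choice $p = 0$ demands only that $\dim \mathrm{M}^{(1)} < \delta_1$, and this holds because $\mathrm{M}^{(1)} = \mathrm{M}_1$ is a curve, so $\dim \mathrm{M}^{(1)} = 1 < 2 = \delta_1$. Applying Corollary~\ref{cor:generic-critical-location-smooth} then completes the proof. I do not anticipate any genuine obstacle: the only step with any computational content is the constant-rank check in (b), which is routine, and all the substantive work is carried out in the Location Theorem (Theorem~\ref{thm:generic-critical-location}) and its corollary.
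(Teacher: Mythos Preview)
Your proposal is correct and follows exactly the paper's own argument: the paper sets $\mathrm{M}^{(1)}=\mathrm{M}_1$, $\mathrm{M}^{(0)}=\mathrm{M}\setminus\mathrm{M}_1$, notes that $\rank\dd_{(s,t)}\psi=2$ on all of $\mathrm{M}$ since $s\neq 0$, and concludes from $\dim\mathrm{M}^{(1)}=1<2$ via Corollary~\ref{cor:generic-critical-location-smooth}. Your write-up simply makes the verification of the three hypotheses more explicit.
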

We may verify Proposition~\ref{prop:quadric surface} by a direct calculation.  According to \eqref{eq:critical point}, $(s,s)$ is a critical point of \eqref{eq:projection-hyperboloid} if and only if
\begin{align*}
\langle (s^2 - b_1,s^4 - b_2,s^6 - b_3), (2s, 3s^3, 4s^5) \rangle &= 0, \\
\langle (s^2 - b_1,s^4 - b_2,s^6 - b_3), (0,s^3,2s^5)  \rangle &= 0.
\end{align*}
By eliminating $s$, we see that if \eqref{eq:projection-hyperboloid} has a critical point of the form $(s,s)$, then $(b_1,b_2,b_3)$ must lie in some surface in $\mathbb{R}^3$ and Proposition~\ref{prop:quadric surface} follows.
\subsection{LU approximation of $2\times 2$ matrix} We observe that the LU-decomposition of $X = \begin{bsmallmatrix}
x & y \\
z & w
\end{bsmallmatrix} \in \mathbb{R}^{2\times 2}$ is given by
\[
X = \begin{bmatrix}
1 & 0 \\
\frac{z}{x} & 1
\end{bmatrix} \begin{bmatrix}
x & y \\
0 & w - \frac{yz}{x}
\end{bmatrix},
\]
as long as $x \ne 0$.  We let 
\begin{equation}\label{eq:set-m-lu}
\mathrm{M} \coloneqq \left\lbrace \begin{bmatrix}
x & y \\
z & w
\end{bmatrix} \in \mathbb{R}^{2\times 2}: x \ne 0
\right\rbrace
\end{equation}
and consider
\begin{equation}\label{eq:psi-lu}
\psi: \mathrm{M} \to \mathbb{R}^{4},\quad \begin{bmatrix}
x & y \\
z & w
\end{bmatrix} \mapsto \left( x, \frac{z}{x}, y, w - \frac{yz}{x} \right).
\end{equation}
It is clear that one can immediately recover the LU-decomposition of $X$ from $\psi(X)$ and \eqref{eq:projection-general2} becomes
\begin{equation}\label{eq:projection-LU}
\begin{array}{rl}
\min&\frac{1}{2}
\left[
(a - x)^2 + \left( b - \frac{z}{x} \right)^2 + (c - y)^2 + \left( d - w + \frac{yz}{x} \right)^2
\right]
\\
\text{s.t.}& x \in \mathbb{R}_\ast,\; y,z,w\in \mathbb{R}
\end{array}
\end{equation}
where $a,b,c,d$ are given real numbers.  We denote $\mathrm{M}_1 \coloneqq \{X\in \mathrm{M}: \det(X) = 0\}$.  A direct calculation indicates that $\rank \dd_{X} \psi = 4$ for any $X\in \mathrm{M}$.  We denote 
\[
 \mathrm{M}^{(1)} \coloneqq \mathrm{M}_1,\quad  \mathrm{M}^{(0)} = \mathrm{M} \setminus \mathrm{M}_1.
\]
Since $\dim  \mathrm{M}^{(1)} = 3$, Corollary~\ref{cor:generic-critical-location-smooth} implies the proposition that follows.
\begin{proposition}  
Let $\psi$ and $\mathrm M$ be defined by \eqref{eq:psi-lu} and \eqref{eq:set-m-lu} respectively. 
For a generic $\mathbf{b}\in \mathbb{R}^{4}$, any critical point $X = \begin{bsmallmatrix}
x & y \\
z & w
\end{bsmallmatrix}$ of \eqref{eq:projection-LU} is invertible.
\end{proposition}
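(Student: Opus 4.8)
The plan is to invoke Corollary~\ref{cor:generic-critical-location-smooth} with the two-piece stratification $\mathrm{M}^{(1)} \coloneqq \mathrm{M}_1 = \{X\in \mathrm{M}\colon \det X = 0\}$ and $\mathrm{M}^{(0)} \coloneqq \mathrm{M}\setminus \mathrm{M}_1$, taking $q = 1$ and $p = 0$. First I would record that the objects involved are of the type required by the corollary: $\mathrm{M} = \{X\in\mathbb{R}^{2\times 2}\colon x\ne 0\}$ is a Zariski-open, hence smooth and irreducible, $4$-dimensional subvariety of $\mathbb{R}^{2\times 2}$, and on $\mathrm{M}$ the only denominator appearing in \eqref{eq:psi-lu} is $x$, so the four components of $\psi$ are regular functions and $\psi$ is a morphism of varieties. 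Moreover $\mathrm{M}^{(1)}$ is closed in $\mathrm{M}$ and $\mathrm{M}^{(0)}$ is open in $\mathrm{M}$, so both are locally closed subvarieties; condition a) of the corollary ($\mathrm{M} = \mathrm{M}^{(0)}\sqcup\mathrm{M}^{(1)}$) is then tautological.

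Next I would verify condition b), that $\dd\psi$ has constant rank on each stratum. In fact I expect $\rank \dd_X\psi = 4$ for \emph{every} $X\in\mathrm{M}$, so $\delta_0 = \delta_1 = 4$. The cleanest justification is that $\psi$ is a bijection from $\mathrm{M}$ onto its image with a rational inverse: from $(u_1,u_2,u_3,u_4) = \psi(X)$ one recovers $x = u_1$, $y = u_3$, $z = u_1 u_2$, $w = u_4 + u_2 u_3$, so $\dd_X\psi$ is invertible at every point of $\mathrm{M}$. (Alternatively one computes the Jacobian of $\psi$ in the coordinates $(x,y,z,w)$ and finds $\det J_X(\psi) = -1/x \ne 0$ on $\mathrm{M}$, which also settles the earlier assertion that $\rank \dd_X\psi = 4$.) Either way, condition b) holds with $\delta_s = 4$.

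Condition c) then reduces to the single inequality $\dim\mathrm{M}^{(1)} < \delta_1 = 4$. Since $\mathrm{M}_1$ is the zero locus inside the irreducible $4$-dimensional variety $\mathrm{M}$ of the polynomial $\det X = xw - yz$, which is not identically zero on $\mathrm{M}$, it is a hypersurface, so $\dim\mathrm{M}_1 = 3 < 4$. Applying Corollary~\ref{cor:generic-critical-location-smooth} then yields that for a generic $\mathbf{b}\in\mathbb{R}^4$ every critical point of \eqref{eq:projection-LU} lies in $\mathrm{M}^{(0)} = \{X\in\mathrm{M}\colon \det X \ne 0\}$, i.e.\ is invertible, which is the claim.

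I do not anticipate a genuine obstacle: the entire content is the routine check that $\dd\psi$ has full rank everywhere on $\mathrm{M}$ together with the trivial dimension count for the determinantal hypersurface. The only point deserving a little care is ensuring that $\psi$ is handled as a morphism of varieties rather than merely a smooth map, so that $\mathrm{M}_1$, its intersections with the strata, and their images under $\psi$ are honest varieties and Lemma~\ref{lem:irreducibility} (hence the corollary) applies; this is immediate once one observes that $x$ is invertible on $\mathrm{M}$.
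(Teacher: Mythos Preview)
Your proposal is correct and follows essentially the same approach as the paper: define the two-piece stratification $\mathrm{M}^{(1)}=\mathrm{M}_1$, $\mathrm{M}^{(0)}=\mathrm{M}\setminus\mathrm{M}_1$, note that $\dd_X\psi$ has full rank $4$ everywhere on $\mathrm{M}$, observe $\dim\mathrm{M}^{(1)}=3<4$, and invoke Corollary~\ref{cor:generic-critical-location-smooth}. Your write-up simply supplies more detail (the explicit inverse of $\psi$ and the Jacobian determinant $-1/x$) where the paper says only ``a direct calculation.''
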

\subsection{Low rank tensor approximation}\label{sec:rank-two}
Let $\B(r,n)$ be defined as in \eqref{eq:prod-sp} and let $\mathrm{M} \coloneqq \B(r,n_1) \times \B(r,n_2) \times \B(r,n_3) \times \mathbb{R}^r$.  For each $(U, \lambda)\coloneqq (U^{(1)},U^{(2)}, U^{(3)}, \lambda) \in \mathrm{M}$,  we write $U^{(s)} = \begin{bsmallmatrix}
\mathbf{u}^{(s)}_1, & \cdots, & \mathbf{u}^{(s)}_r 
\end{bsmallmatrix}$ and $\lambda = (\lambda_1, \dots, \lambda_r)$ where $\mathbf{u}^{(s)}_i \in \mathbb{R}^{n_s}$,  $\lambda_i \in \mathbb{R}$ and $\lVert \mathbf{u}^{(s)}_i \rVert = 1$,  $1 \le i \le r$,  $ s = 1,2,3$.  We consider the map 
\[
\psi: \mathrm{M}  \to \mathbb{R}^{n_1 \times n_2 \times n_3},  \quad
\psi(U, \lambda) = \sum_{i=1}^r \lambda_i\mathbf u^{(1)}_i\otimes \mathbf u^{(2)}_i\otimes\mathbf u^{(3)}_i.
\]
Hence,  \eqref{eq:projection-general2} is the classical low rank tensor approximation problem:
\begin{equation}\label{eq:projection-lowrank}
\begin{array}{rl}
\min&\frac{1}{2}\left\lVert \mathcal{B} - \sum_{i=1}^r \lambda_i\mathbf u^{(1)}_i\otimes \mathbf u^{(2)}_i\otimes\mathbf u^{(3)}_i  \right\rVert^2\\
\text{s.t.}& (U^{(1)},U^{(2)}, U^{(3)}, \lambda) \in \B(r,n_1) \times \B(r,n_2) \times \B(r,n_3) \times \mathbb{R}^r,
\end{array}
\end{equation}
where $\mathcal{B} \in \mathbb{R}^{n_1\times n_2 \times n_3}$ is a given tensor.
\subsubsection{Case $r = 1$} For completeness,  we briefly discuss the rank one tensor approximation.  Let
\[
\mathrm{M}^{(0)} \coloneqq \B(1,n_1) \times \B(1,n_2) \times \B(1,n_3) \times \mathbb{R}_{\ast},\quad  \mathrm{M}^{(1)} \coloneqq \B(1,n_1) \times \B(1,n_2) \times \B(1,n_3) \times \{0\}.
\]
Although Corollary~\ref{cor:generic-critical-location-smooth} is applicable,  images $\psi ( \mathrm{M}^{(0)} ) = \{ \mathcal{A}\in \mathbb{R}^{n_1\times n_2 \times n_3}: \rank (\mathcal{A}) = 1\}$ and $\psi ( \mathrm{M}^{(1)} ) = \{0\}$ are already sufficiently simple.  Thus,  one can analyze \eqref{eq:projection-lowrank} with respect to this stratification directly without invoking Corollary~\ref{cor:generic-critical-location-smooth}.  We refer interested readers to \cite{HL-18} for details.
\subsubsection{Case $r = 2$}
We denote 
\[
\mathrm{M}_1 \coloneqq \left\lbrace (U^{(1)},U^{(2)},U^{(3)})\in \B(2,n_1)\times \B(2,n_2)\times\B(2,n_3): \sum_{i=1}^3 \rank(U^{(i)}) = 5 \right\rbrace \times \mathbb{R}_{\ast}^2
\] 
and consider
\begin{align*}
\mathrm{M}^{(0)} & \coloneqq \OB(2,n_1) \times \OB(2,n_2) \times \OB(2,n_3) \times \mathbb{R}_{\ast}^2,  \quad 
\mathrm{M}^{(2)}  \coloneqq \mathrm{M}_1,   \quad 
\mathrm{M}^{(1)}  \coloneqq \mathrm{M} \setminus \left( \mathrm{M}^{(0)} \sqcup \mathrm{M}^{(2)} \right). 
\end{align*}
Here $\OB(2,n_s)$ is the oblique manifold defined in \eqref{eq:oblique}.  By Kruskal's theorem \cite{Kruskal77},  the rank decomposition of any tensor parameterized by $\mathrm{M}^{(0)}$ is essentially unique.  A consequence of this observation is that such tensors can be easily analyzed.  However,  rank decompositions of tensors parametrized by $\mathrm{M}^{(2)}$ may not be essentially unique,  from which difficulties arise.  Fortunately,  such tensors can be avoided by  Corollary~\ref{cor:generic-critical-location-smooth}.
\begin{proposition}\label{prop:rank2}
Assume that $n_1,n_2,n_3 \ge 4$. For a generic tensor $\mathcal{B} \in \mathbb{R}^{n_1\times n_2\times n_3}$, a critical point $(U,\lambda)$ of \eqref{eq:projection-lowrank} is contained in $\mathrm{M}^{(0)} \sqcup \mathrm{M}^{(1)}$.  
  \end{proposition}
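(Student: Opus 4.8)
The plan is to verify the hypotheses of Corollary~\ref{cor:generic-critical-location-smooth} with (the pieces of) $\mathrm{M}_1$ serving as the ``bad'' strata $M^{(p+1)},\dots,M^{(q)}$. Note first that $\mathrm{M}=\B(2,n_1)\times\B(2,n_2)\times\B(2,n_3)\times\mathbb{R}^2$ is smooth of dimension $\sum_{i}2(n_i-1)+2=2(n_1+n_2+n_3)-4$. Decompose $\mathrm{M}_1$ into the three locally closed pieces $\mathrm{M}_1^{(\ell)}$ ($\ell=1,2,3$) on which $U^{(\ell)}$ has rank $1$ and the other two factor matrices have rank $2$; since the rank-one locus of $\B(2,n_\ell)=\mathbb{S}^{n_\ell-1}\times\mathbb{S}^{n_\ell-1}$ is the closed set $\{\mathbf{u}^{(\ell)}_1=\pm\mathbf{u}^{(\ell)}_2\}$ of dimension $n_\ell-1$, while $\OB(2,n_j)$ is open of dimension $2(n_j-1)$, a direct count gives
\[
\dim\mathrm{M}_1^{(\ell)}=(n_\ell-1)+2(n_j-1)+2(n_k-1)+2=2(n_1+n_2+n_3)-n_\ell-3,
\]
where $\{j,k\}=\{1,2,3\}\setminus\{\ell\}$; hence $\dim\mathrm{M}_1=2(n_1+n_2+n_3)-\min_i n_i-3$.

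The heart of the argument is to compute $\rank\dd_{x}\psi$ at a point $x=(U^{(1)},U^{(2)},U^{(3)},\lambda)\in\mathrm{M}_1^{(\ell)}$. Writing $\mathbf{u}:=\mathbf{u}^{(\ell)}_1=\pm\mathbf{u}^{(\ell)}_2$, I would show, by expanding $\dd_x\psi$ term by term, that its image equals the internal direct sum
\[
\bigl(\mathbf{u}\otimes W\bigr)\;\oplus\;\Bigl(\mathbf{u}^\perp\otimes\spn\bigl(\mathbf{u}^{(j)}_1\otimes\mathbf{u}^{(k)}_1,\ \mathbf{u}^{(j)}_2\otimes\mathbf{u}^{(k)}_2\bigr)\Bigr),
\]
where $W\subseteq\mathbb{R}^{n_j\times n_k}$ is the tangent space at $B:=\lambda_1\mathbf{u}^{(j)}_1\otimes\mathbf{u}^{(k)}_1+\lambda_2\mathbf{u}^{(j)}_2\otimes\mathbf{u}^{(k)}_2$ to the variety of matrices of rank $\le 2$. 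On $\mathrm{M}_1$ the columns of $U^{(j)}$ (resp. $U^{(k)}$) are independent and $\lambda_1\lambda_2\neq0$, so $B$ has rank exactly $2$; therefore $\dim W=2(n_j+n_k)-4$, the two rank-one matrices above are independent, and the second summand has dimension $2(n_\ell-1)$. Consequently
\[
\rank\dd_x\psi=\bigl(2(n_j+n_k)-4\bigr)+2(n_\ell-1)=2(n_1+n_2+n_3)-6,
\]
which is symmetric in $n_1,n_2,n_3$ and uses only the defining conditions of $\mathrm{M}_1$; hence $\dd\psi$ has constant rank $\delta:=2(n_1+n_2+n_3)-6$ on each $\mathrm{M}_1^{(\ell)}$.

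Next I would stratify $\mathrm{M}\setminus\mathrm{M}_1=\mathrm{M}^{(0)}\sqcup\mathrm{M}^{(1)}$ into finitely many constant-rank locally closed subvarieties via the rank function $x\mapsto\rank\dd_x\psi$ (these are the ``good'' strata and carry no dimension constraint), and adjoin $\mathrm{M}_1^{(1)},\mathrm{M}_1^{(2)},\mathrm{M}_1^{(3)}$ to obtain a stratification $\{M^{(s)}\}_{s=0}^{q}$ satisfying conditions (a) and (b) of Corollary~\ref{cor:generic-critical-location-smooth}. Taking $p$ to be the last index of a good stratum, condition (c) holds because, using $n_1,n_2,n_3\ge 4$,
\[
\dim\mathrm{M}_1^{(\ell)}=2(n_1+n_2+n_3)-n_\ell-3\le 2(n_1+n_2+n_3)-7<2(n_1+n_2+n_3)-6=\delta .
\]
Corollary~\ref{cor:generic-critical-location-smooth} then yields that for a generic $\mathcal{B}$ every critical point of \eqref{eq:projection-lowrank} lies in $\bigsqcup_{s\le p}M^{(s)}=\mathrm{M}\setminus\mathrm{M}_1=\mathrm{M}^{(0)}\sqcup\mathrm{M}^{(1)}$, which is the claim.

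The main obstacle is the rank computation of the second paragraph: one must identify the image of $\dd_x\psi$ precisely and, crucially, show the rank does not drop anywhere on $\mathrm{M}_1$. This is exactly where the definition of $\mathrm{M}_1$ (sum of factor ranks equal to $5$ and $\lambda\in\mathbb{R}_{\ast}^2$) is used: it forces the ``transverse'' matrix $B$ to have rank exactly $2$ and the $\lambda_i$ to be nonzero, which prevents any further degeneration of the differential and makes $\delta$ constant; this is also the geometric source of the non-uniqueness of the rank decomposition for tensors parametrized by $\mathrm{M}_1$. A secondary, routine point is checking that the pieces $\mathrm{M}_1^{(\ell)}$ and the rank-stratification pieces of $\mathrm{M}\setminus\mathrm{M}_1$ are indeed locally closed subvarieties.
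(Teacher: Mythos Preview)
Your proposal is correct and follows essentially the same approach as the paper: verify that $\dd\psi$ has constant rank $2(n_1+n_2+n_3)-6$ on $\mathrm{M}_1$, compare with the dimension of (each piece of) $\mathrm{M}_1$, and apply Corollary~\ref{cor:generic-critical-location-smooth}. The only cosmetic difference is that you compute the rank by describing the image of $\dd_x\psi$ as the direct sum $(\mathbf{u}\otimes W)\oplus(\mathbf{u}^\perp\otimes\spn(\cdots))$, whereas the paper computes the two-dimensional kernel directly and then invokes the dimension of the rank-two matrix manifold; both reductions hinge on the same fact that the map $\psi'$ parametrizing $n_j\times n_k$ matrices of rank two has differential of rank $2(n_j+n_k)-4$.
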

  \begin{proof}
  We notice that
  \[
  \{(U,\lambda) \in \mathrm{M}: \rank(\psi((U,\lambda))) = 2\} = \mathrm{M}^{(0)} \sqcup \mathrm{M}^{(2)},\quad  \{(U,\lambda) \in \mathrm{M}: \rank(\psi((U,\lambda))) \le 1 \} = \mathrm{M}^{(1)}.
  \]
Thus it suffices to prove that for a generic $\mathcal{B}\in \mathbb{R}^{n_1\times n_2\times n_3}$, a critical point $(U,\lambda)$ of \eqref{eq:projection-general2} such that $\rank(\psi((U,\lambda))) = 2$ lies in $\mathrm{M}^{(0)}$.  To that end,  
we claim that $\delta_{2} = 2(n_1+n_2+n_3) - 6 > n_1+2(n_2+n_3) - 3 = \dim \mathrm{M}^{(2)}$. Granted this, Corollary~\ref{cor:generic-critical-location-smooth} implies that $(U,\lambda) \in \mathrm{M}^{(0)}$. 


It is left to calculate $\delta_{2}$.  Indeed, let $(A,B,C,\lambda_1,\lambda_2)$ be a point in $\mathrm{M}^{(2)}$.  Without loss of generality, we assume that $\rank(A) = 1$.  Then $A = [\mathbf{a}, \mathbf{a}]$ for some $\mathbf{a}\in \mathbb{S}^{n-1}$.  Here the possible sign can be absorbed into the coefficient.  By assumption we also have
  \[
  B = \begin{bmatrix}
    \mathbf b_1 & \mathbf  b_2
  \end{bmatrix} \in \OB(2,n_2),\quad C = \begin{bmatrix}
    \mathbf c_1 &  \mathbf c_2
  \end{bmatrix} \in \OB(2,n_3),\quad (\lambda_1,\lambda_2) \in \mathbb{R}_{\ast}^2.
  \]
  Thus $\operatorname{d}\psi (\Tt_{ \mathrm{M}}(\mathbf x))$ is spanned by tensors of the following types:
  \begin{align*}
    \mathbf a\otimes\mathbf  b_1 \otimes \mathbf c_1, \quad  \mathbf a'\otimes \mathbf b_1 \otimes \mathbf c_1, \quad \mathbf a\otimes \mathbf b_1' \otimes \mathbf c_1, \quad \mathbf a\otimes \mathbf b_1 \otimes \mathbf c_1', \\
    \mathbf a \otimes \mathbf b_2 \otimes \mathbf c_2, \quad  \mathbf a''\otimes \mathbf b_2 \otimes \mathbf c_2, \quad \mathbf a\otimes \mathbf b_2' \otimes \mathbf c_2, \quad \mathbf a\otimes \mathbf b_2 \otimes \mathbf c_2',
  \end{align*}
  where $\mathbf u'$ and $\mathbf u''$ denote unit vectors which are perpendicular to $\mathbf u\in \mathbb{R}^n$. We let $\{\mathbf a'_{j}\}_{j=1}^{n_1-1}$ (resp. $\{\mathbf b'_{ij}\}_{j=1}^{n_2-1}$,$\{\mathbf c'_{ij}\}_{j=1}^{n_3-1}$, $i=1,2$) be an orthonormal basis of $\mathbf a^{\perp}$ (resp. $\mathbf b_i^\perp$, $\mathbf c_i^\perp$,$i=1,2$).  If there exists some $x,y,x_{1j},y_{1j},x_{2j},y_{2j},x_{3j},y_{3j} \in \mathbb{R}$ such that
  \begin{align}\label{prop:rank2:eq1}
   &x \mathbf a\otimes \mathbf b_1 \otimes \mathbf c_1 + \sum_{j=1}^{n_1-1} x_{1j}\mathbf a_j'\otimes \mathbf b_1 \otimes \mathbf c_1 + \sum_{j=1}^{n_2-1} x_{2j}\mathbf a\otimes \mathbf b'_{1j} \otimes \mathbf c_1 + \sum_{j=1}^{n_3-1} x_{3j}\mathbf a\otimes \mathbf b_1 \otimes \mathbf c'_{1j} \nonumber \\
  = &y \mathbf a\otimes \mathbf b_2 \otimes \mathbf c_2 + \sum_{j=1}^{n_1-1} y_{1j}\mathbf a_j'\otimes \mathbf b_2 \otimes \mathbf c_2 + \sum_{j=1}^{n_2-1} y_{2j}\mathbf a\otimes \mathbf b'_{2j} \otimes \mathbf c_2 + \sum_{j=1}^{n_3-1} y_{3j}\mathbf a\otimes \mathbf b_2 \otimes \mathbf c'_{2j},
  \end{align}
  then clearly we must have $x_{1j} = y_{1j} = 0, 1\le j \le n_1-1$, which can be seen by contracting both sides of \eqref{prop:rank2:eq1} with $\mathbf a_j'$ for each $1\le j \le n_1-1$. Thus \eqref{prop:rank2:eq1} reduces to
  \begin{multline}\label{prop:rank2:eq1}
    \mathbf a\otimes \big(
  x  \mathbf b_1 \otimes \mathbf c_1 + \sum_{j=1}^{n_2-1} x_{2j} \mathbf b'_{1j} \otimes \mathbf c_1 + \sum_{j=1}^{n_3-1} x_{3j} \mathbf b_1 \otimes \mathbf c'_{1j} \nonumber \\ - y \mathbf b_2 \otimes \mathbf c_2  - \sum_{j=1}^{n_2-1} y_{2j} \mathbf b'_{2j} \otimes \mathbf c_2 - \sum_{j=1}^{n_3-1} y_{3j}\mathbf  b_2 \otimes \mathbf c'_{2j} \big) = \mathbf 0.
  \end{multline}
  We notice that the term in the parenthesis lies in $\operatorname{d}\psi' (\mathsf{T}_{\OB(2,n_2)\times \OB(2,n_3)\times \mathbb{R}_{\ast}^2} ((B,C,\lambda_1,\lambda_2))) $ where
  \[
  \psi': \OB(2,n_2)\times \OB(2,n_3)\times \mathbb{R}_{\ast}^2\to \mathbb{R}^{n_2\times n_3},\quad (B,C,\lambda_1,\lambda_2) \mapsto \lambda_1 \mathbf b_1 \otimes\mathbf  c_1 + \lambda_2 \mathbf b_2 \otimes\mathbf  c_2.
  \]
Since $\psi'$ parametrizes the manifold consisting of $n_2 \times n_3$ matrices of rank two,  the rank of $\dd \psi'$ is the constant $2(n_2+n_3)-4$ by \cite{Lee2010}.  We conclude that $\dim \left( \ker \dd_{(A,B,C,\lambda_1,\lambda_2)}\psi \right) = 2$, from which we obtain $\delta_2 = (2(n_1+n_2+n_3)-4) - 2 = 2(n_1+n_2+n_3) - 6$.
\end{proof}

\subsection{Low rank partially orthogonal tensor approximation}\label{sec:lrpota}
Let $r \le \min\{n_i\colon i=1,\dots,k\}$ and $1\le s \le k$ be positive integers.  We denote 
\[
\mathrm{M} \coloneqq  \V(r,n_1) \times \cdots \times \V(r,n_s) \times \B(r,n_{s+1}) \times \cdots \times \B(r,n_k) \times \mathbb{R}_{\ast}^r,
\]
where $\V(r,n)$ is the Stiefel manifold defined in \eqref{eq:stf}.  For each $(U, \lambda) \coloneqq (U^{(1)},\dots,  U^{(k)}, \lambda_1,\dots,  \lambda_r) \in \mathrm{M}$,  we write $U^{(i)} \coloneqq \begin{bmatrix}
\mathbf{u}^{(i)}_1, & \cdots, & \mathbf{u}^{(i)}_r
\end{bmatrix} \in \B(r,n_i)$,  $1 \le i \le k$.  We consider the map 
\begin{equation}\label{eq:psi-tensor}  
\psi: \mathrm{M} \to \mathbb{R}^{n_1} \otimes \cdots \otimes \mathbb R^{n_k},\quad \psi(U, \lambda) \coloneqq \sum_{j=1}^r \lambda_j \mathbf{u}^{(1)}_j \otimes  \cdots \otimes \mathbf{u}^{(k)}_j. 
\end{equation}
In this case,  \eqref{eq:projection-general2} is the rank-$r$ partially orthogonal tensor approximation problem:
\begin{flalign}\label{eq:sota}
  &\text{(LRPOTA-r)}\ \ \ \ \ \ \begin{array}{rl}\min& g(U,\lambda) \coloneqq \frac{1}{2}\left\lVert \mathcal{B} - \sum_{j=1}^r \lambda_j \mathbf{u}^{(1)}_j \otimes  \cdots \otimes \mathbf{u}^{(k)}_j \right\rVert^2\\
  \text{s.t.}  & U^{(i)} \in \V(r,n_i) \text{~for~} 1 \le i \le s,\\
  & U^{(i)}\in\B(r,n_i) \text{~for~} s+1 \le i \le k,\\
  & \lambda_j \in\mathbb R_{\ast} \text{~for~} 1 \le j \le r.
  \end{array}&
  \end{flalign}
We consider $\mathrm{M}_1 \coloneqq \left\lbrace
(U,\lambda)\in \mathrm{M}: \rank (U^{(s + i)}) < r\text{~for some~} 1 \le i \le k - s
\right\rbrace$ and define
\begin{align*}
\mathrm{M}^{(0)} &\coloneqq \left\lbrace
(U,\lambda)\in \mathrm{M}: \rank(U^{(s + i)}) = r  \text{~for all~} 1 \le i \le k - s
\right\rbrace,\\
\mathrm{M}^{(i)} &\coloneqq \left\lbrace
(U,\lambda)\in \mathrm{M}: \rank(U^{(s + i)}) < r, \;  \rank(U^{(s + j)}) = r,\;  j < i
\right\rbrace,\quad 1 \le i \le k - s.
\end{align*}
\begin{proposition}\cite[Propositions~3.24,  3.27 and 3.28]{YH-22}\label{prop:partial-geq3}
If $k\ge s \ge 1$ and when $s = 1$ it holds that
\begin{align}
\sum_{i'=2}^k n_{i'} &\geq n_1 +k-1, \label{lem:defectivity s=1:eq01}\\
\sum_{i'=2}^k n_{i'} &\ge n_{i''} + k,\quad 2 \le i'' \le k, \label{lem:defectivity s=1:eq02}
\end{align}
then for a generic $\mathcal{B}\in \mathbb{R}^{n_1 \times \cdots \times n_k}$, any critical point of \eqref{eq:sota} is contained in $\V(r,n_1) \times \cdots \times \V(r,n_s) \times \OB(r,n_{s+1}) \times \cdots \times \OB(r,n_k) \times \mathbb{R}_{\ast}^r$.
\end{proposition}
\begin{proof}
Details of calculations below can be found in \cite[Propositions~3.24,  3.27 and 3.28]{YH-22},  thus we only provide a sketch here.  For $s \ge 3$,  a direct calculation implies that $\dd\psi$ has the constant rank 
\[
\delta = \dim \mathrm{M} = \left( \sum_{i=1}^k n _i - \frac{r-1}{2}s - k + 1 \right) r\] 
on $\mathrm{M} $.  It is also easy to verify that $\bigsqcup_{i=0}^{k-s}  \mathrm{M}^{(i)} = \mathrm{M}$,  $\bigsqcup_{i=1}^{k-s}  \mathrm{M}^{(i)} = \mathrm{M}_1$ and $\dim  \mathrm{M}^{(i)} < \delta$ for each $1 \le i \le k-s$.  Thus,  we obtain the desired conclusion by Corollary~\ref{cor:generic-critical-location-smooth}.

For $s = 2$ or $s = 1$ with \eqref{lem:defectivity s=1:eq01} and \eqref{lem:defectivity s=1:eq02},  $\operatorname{d}\psi$ is no longer of constant rank on $\mathrm{M}$.  However,  by further partitioning $\mathrm{M}^{(i)}$ for each $1 \le i \le k-s$,   we obtain a refined stratification which is denoted by $\{ \widetilde{\mathrm{M}}^{(i)}\}_{i=0}^{q}$.  For each $0 \le i \le q$,  $\dd \psi$ has the constant rank $\delta_i$ on $\widetilde{\mathrm{M}}^{(i)}$.  Moreover,  we have $\dim \widetilde{\mathrm{M}}^{(i)} < \delta_i$.  Therefore,  Corollary~\ref{cor:generic-critical-location-smooth} is applicable again and this completes the proof.  
\end{proof}

We notice that in most scenarios, we may assume $n_1 = \cdots = n_k = n$. In this case, conditions \eqref{lem:defectivity s=1:eq01} and \eqref{lem:defectivity s=1:eq02} reduce to $n \ge 1 +\frac{2}{k-2}$, which can be further rewritten as $n \ge 2 + \delta_{k,3}$, where $\delta_{k,3}$ is the Kronecker delta function. Hence we obtain the following.
  \begin{corollary}\label{cor:KKT location s=1}
Assume that $s = 1$,  $k\ge 4$ and $n_1 = \cdots = n_k = n \ge 2$. For a generic $\mathcal{B}$, all KKT points of \eqref{eq:sota} are contained in $\V(n,r) \times \OB(n,r)^{\times (k-1)} \times \mathbb{R}_{\ast}^r$.
  \end{corollary}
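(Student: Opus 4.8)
The plan is to obtain Corollary~\ref{cor:KKT location s=1} as an immediate specialization of Proposition~\ref{prop:partial-geq3}, so that essentially all the work reduces to checking one elementary inequality. First I would record that a KKT point of the constrained problem (LRPOTA-$r$) in \eqref{eq:sota} is the same thing as a critical point of the reformulated projection problem \eqref{eq:projection-general2} in the sense used throughout Section~\ref{sec:critical-general}: the constraints $\lambda_j \in \mathbb{R}_\ast$ carve out an open subset of $\mathbb{R}^r$ and hence carry no multipliers, while the Stiefel and oblique factors together with $\mathbb{R}_\ast^r$ assemble into the smooth manifold $\mathrm{M}$, so stationarity of $g$ on $\mathrm{M}$ is exactly the condition \eqref{eq:critical point} for the map $\psi$ of \eqref{eq:psi-tensor} and the datum $\mathbf{b} = \mathcal{B}$. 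It therefore suffices to invoke Proposition~\ref{prop:partial-geq3}, whose ambient hypotheses ($k \ge s \ge 1$, $r \le \min_i n_i$) are inherited from the setup of Section~\ref{sec:lrpota}, provided its extra hypotheses \eqref{lem:defectivity s=1:eq01} and \eqref{lem:defectivity s=1:eq02} hold for $s = 1$ and $n_1 = \cdots = n_k = n$.

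Next I would verify those two inequalities under $n_1 = \cdots = n_k = n$. Condition \eqref{lem:defectivity s=1:eq01}, namely $\sum_{i'=2}^k n_{i'} \ge n_1 + k - 1$, reads $(k-1)n \ge n + k - 1$, i.e. $(k-2)n \ge k-1$, i.e. $n \ge 1 + \tfrac{1}{k-2}$; and condition \eqref{lem:defectivity s=1:eq02}, namely $\sum_{i'=2}^k n_{i'} \ge n_{i''} + k$ for all $2 \le i'' \le k$, reads $(k-1)n \ge n + k$, i.e. $(k-2)n \ge k$, i.e. $n \ge 1 + \tfrac{2}{k-2}$. The second inequality dominates the first for $k > 2$. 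Since the statement assumes $k \ge 4$, we have $\tfrac{2}{k-2} \le 1$, so the hypothesis $n \ge 2$ already forces $n \ge 1 + \tfrac{2}{k-2}$, and hence both \eqref{lem:defectivity s=1:eq01} and \eqref{lem:defectivity s=1:eq02} hold.

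With the hypotheses in force, Proposition~\ref{prop:partial-geq3} yields that for a generic $\mathcal{B}$ every critical point of \eqref{eq:sota} lies in $\V(r,n_1) \times \cdots \times \V(r,n_s) \times \OB(r,n_{s+1}) \times \cdots \times \OB(r,n_k) \times \mathbb{R}_\ast^r$; setting $s = 1$ and $n_1 = \cdots = n_k = n$ collapses this to $\V(r,n) \times \OB(r,n)^{\times(k-1)} \times \mathbb{R}_\ast^r$, which is the claimed set (the appearance of $(r,n)$ versus $(n,r)$ in the matrix factors is only a notational flip of the arguments). There is no genuine obstacle here: the substantive content is entirely in Proposition~\ref{prop:partial-geq3}, and the only point requiring attention is the borderline case $k = 3$, where \eqref{lem:defectivity s=1:eq02} would instead demand $n \ge 3$ — this is precisely the $\delta_{k,3}$ correction noted in the discussion preceding the statement and the reason the corollary is restricted to $k \ge 4$.
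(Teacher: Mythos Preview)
Your proposal is correct and follows essentially the same approach as the paper: the corollary is obtained directly from Proposition~\ref{prop:partial-geq3} by specializing to $n_1=\cdots=n_k=n$ and checking that \eqref{lem:defectivity s=1:eq01} and \eqref{lem:defectivity s=1:eq02} reduce to $n \ge 1+\tfrac{2}{k-2}$, which holds for $k\ge 4$ and $n\ge 2$. Your additional remark identifying KKT points of \eqref{eq:sota} with critical points in the sense of \eqref{eq:critical point} is a welcome clarification that the paper leaves implicit.
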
 
\section{Linear Convergence Rate}\label{sec:generic}
In this section, we establish a linear convergence rate for an iterative sequence $\{\x_k\}$ generated by an algorithm for \eqref{eq:projection-general}.  We recall from Subsection~\ref{subsec:ineq} that the unified Hoffman error bound holds at $A \in \mathbb{R}^{m\times n}$ if $\tau(A)$ in \eqref{eq:hoffman} is bounded near $A$.
\begin{proposition}\label{prop:unified-hoffman}
If $\eta : \mathrm{M} \rightarrow \mathrm{N}$ is a smooth map between two smooth manifolds and $\dd_{\x}\eta$ is surjective for every $\x\in\mathrm{M}$,  then the unified Hoffman error bound holds at $\dd_{\x}\eta$ for every $\x\in\mathrm{M}$. 
\end{proposition}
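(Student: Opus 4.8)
The plan is to reduce the assertion to a statement about matrices and then invoke continuity of singular values. First I would make the conclusion concrete: choosing smooth local orthonormal frames for the tangent bundles $\Tt\mathrm{M}$ near $\x$ and $\Tt\mathrm{N}$ near $\eta(\x)$, the differential $\dd_{\x'}\eta$ is represented, for $\x'$ near $\x$, by a matrix $A(\x')$ that depends continuously (indeed smoothly) on $\x'$, and $\dd_{\x}\eta$ being surjective means $A(\x)$ has full row rank. So it suffices to prove: \emph{if $A$ has full row rank, then the Hoffman constant $\tau(\cdot)$ of Lemma~\ref{lem:hoffman} is bounded on some neighbourhood of $A$ in matrix space.}

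Second, I would compute $\tau(A)$ explicitly in the surjective case. Let $A^\dagger$ denote the Moore--Penrose pseudoinverse. Given any $\mathbf b$ in the codomain and any solution $\x$ of $A\y=\mathbf b$, put $\x_0\coloneqq \x - A^\dagger\mathbf b$. Surjectivity gives $AA^\dagger = I$, hence $A\x_0=\mathbf b-\mathbf b=\mathbf 0$, so $\x_0\in\{\y\colon A\y=\mathbf 0\}$, and $\|\x-\x_0\|=\|A^\dagger\mathbf b\|\le\|A^\dagger\|_2\,\|\mathbf b\|$, where $\|\cdot\|_2$ is the operator norm. Thus Lemma~\ref{lem:hoffman} holds with $\tau(A)=\|A^\dagger\|_2=1/\sigma_{\min}(A)$, where $\sigma_{\min}(A)$ is the smallest singular value of $A$; this is strictly positive precisely because $A$ has full row rank.

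Third, I would invoke the continuity of singular values: by Weyl's perturbation inequality each singular value is a $1$-Lipschitz function of the matrix, so $A'\mapsto\sigma_{\min}(A')$ is continuous on the whole matrix space. Hence if $\sigma_{\min}(A)=c>0$ there is $\rho>0$ with $\sigma_{\min}(A')\ge c/2$ whenever $\|A'-A\|_2<\rho$; in particular every such $A'$ still has full row rank, and the second step gives $\tau(A')\le 2/c$ on that ball. This is exactly the unified Hoffman error bound at $A$, and taking $A=\dd_{\x}\eta$ for each $\x\in\mathrm{M}$ completes the proof.

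I do not expect a genuine obstacle here; the only subtlety to get right is that the continuity argument must be carried out for the auxiliary function $\sigma_{\min}$ rather than for $\tau$ itself, since $\tau(\cdot)$ genuinely blows up (is discontinuous) as one approaches the rank-deficient locus — it is the restriction to the open set of full-row-rank matrices that makes the bound uniform. The remaining bookkeeping, namely passing from the intrinsic map of manifolds to a continuously varying Jacobian matrix via local frames, is routine.
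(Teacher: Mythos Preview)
Your proposal is correct and follows the same two-step structure as the paper's proof: first reduce to a statement about matrices by choosing local charts (the paper) or local frames (you), and then establish that the Hoffman constant is locally bounded at every full-rank matrix. The only difference is cosmetic: the paper disposes of the matrix step by citing \cite[Lemma~3.2.3]{facchinei2003finite}, whereas you write out that argument explicitly via $\tau(A)=\lVert A^\dagger\rVert_2=1/\sigma_{\min}(A)$ and continuity of singular values --- which is precisely the content of the cited lemma, so the two proofs coincide in substance.
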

\begin{proof}
Denote $m \coloneqq \dim \mathrm{M}$ and $n \coloneqq \dim \mathrm{N}$.  By choosing local coordinate charts of $\mathrm{M}$ and $\mathrm{N}$ respectively,  the problem reduces to prove that the unified Hoffman error bound holds at $A\in\mathbb R^{m\times n}$ whenever $\rank (A) = n$.  This statement is true according to the proof of \cite[Lemma~3.2.3]{facchinei2003finite}.
\end{proof}

Let $\mathrm{M}$ be a smooth $m$-dimensional manifold and let $\psi: \mathrm{M} \to \mathbb{R}^n$ be a smooth map as in \eqref{eq:projection-general}. 
\begin{corollary}
If $\mathrm{M}_1$ is a closed subset of $\mathrm{M}$ such that $\psi(\mathrm{M} \setminus \mathrm{M}_1)$ is a smooth manifold and $\psi: \mathrm{M} \setminus \mathrm{M}_1 \to \psi(\mathrm{M} \setminus \mathrm{M}_1)$ is a local diffeomorphism,  then the unified Hoffman error bound holds at $\dd_{\x}\psi$ for every $\x\in \mathrm{M} \setminus \mathrm{M}_1$.
\end{corollary}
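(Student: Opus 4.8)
The plan is to deduce the statement from Proposition~\ref{prop:unified-hoffman} applied to the restriction of $\psi$. First I would record the purely formal preliminaries: since $\mathrm{M}_1$ is closed in $\mathrm{M}$, the complement $\mathrm{M}\setminus\mathrm{M}_1$ is an open submanifold of the smooth manifold $\mathrm{M}$, hence itself a smooth manifold of dimension $m$; by hypothesis $\mathrm{N}\coloneqq\psi(\mathrm{M}\setminus\mathrm{M}_1)$ is a smooth manifold and $\eta\coloneqq\psi|_{\mathrm{M}\setminus\mathrm{M}_1}\colon \mathrm{M}\setminus\mathrm{M}_1\to\mathrm{N}$ is a local diffeomorphism. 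In particular, for every $\x\in\mathrm{M}\setminus\mathrm{M}_1$ the differential $\dd_{\x}\eta\colon \mathsf{T}_{\mathrm{M}}(\x)\to\mathsf{T}_{\mathrm{N}}(\eta(\x))$ is a linear isomorphism, hence surjective. Proposition~\ref{prop:unified-hoffman} then applies with the roles of $\mathrm{M},\mathrm{N},\eta$ there played by $\mathrm{M}\setminus\mathrm{M}_1,\mathrm{N},\eta$, and shows that the unified Hoffman error bound holds at $\dd_{\x}\eta$ for every $\x\in\mathrm{M}\setminus\mathrm{M}_1$.

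The remaining task is to transfer this conclusion from $\dd_{\x}\eta$ to the extrinsic Jacobian $\dd_{\x}\psi$ that actually appears in the statement. Here I would use that $\mathrm{N}$ is a submanifold of $\mathbb{R}^n$ equipped with the induced metric, so that $\dd_{\x}\psi$ is the composition of $\dd_{\x}\eta$ with the isometric inclusion $\mathsf{T}_{\mathrm{N}}(\eta(\x))\hookrightarrow\mathbb{R}^n$. Consequently $\ker\dd_{\x}\psi=\ker\dd_{\x}\eta$ and $\|\dd_{\x}\psi\,\mathbf v\|=\|\dd_{\x}\eta\,\mathbf v\|$ for all $\mathbf v\in\mathsf{T}_{\mathrm{M}}(\x)$, so the Hoffman constant is unchanged: $\tau(\dd_{\x}\psi)=\tau(\dd_{\x}\eta)$. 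Moreover, since $\eta$ is a local diffeomorphism, $\dd_{\x}\psi$ is injective, i.e.\ it has full column rank $m\le n$; this rank is maximal, hence locally constant, so for every matrix $B$ in a small neighborhood of $\dd_{\x}\psi$ one still has $\ker B=\{\mathbf 0\}$ and $\tau(B)=\sigma_{\min}(B)^{-1}$, which is continuous and therefore bounded near $\dd_{\x}\psi$. This is precisely the unified Hoffman error bound at $\dd_{\x}\psi$.

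The only delicate point — and the place I expect any friction — is this bookkeeping between the intrinsic formulation of Proposition~\ref{prop:unified-hoffman}, in which the relevant differential is a surjection onto the abstract manifold $\mathrm{N}$, and the extrinsic object $\dd_{\x}\psi$ valued in the full ambient space $\mathbb{R}^n$. One must check that enlarging the target from $\mathsf{T}_{\mathrm{N}}(\eta(\x))$ to $\mathbb{R}^n$ neither enlarges the kernel of the differential nor alters the residual norm $\|\dd_{\x}\psi\,\mathbf v\|$ governing the error bound, which is exactly what the isometric factorization above guarantees; once that identification is recorded, nothing further remains.
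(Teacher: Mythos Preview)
Your proposal is correct, and its first paragraph is exactly the paper's intended argument (the corollary is stated without proof): a local diffeomorphism has surjective differential, so Proposition~\ref{prop:unified-hoffman} applies directly to $\psi|_{\mathrm{M}\setminus\mathrm{M}_1}$. Your remaining discussion distinguishing the intrinsic differential $\dd_{\x}\eta$ from an extrinsic Jacobian into $\mathbb{R}^n$ is careful and not wrong, but it is unnecessary here, since in the paper $\dd_{\x}\psi$ already denotes the differential of the restricted map between tangent spaces (cf.\ the proof of Lemma~\ref{lem:linear}).
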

\subsection{General theory}\label{sec:generic-general}
We further assume that $\mathrm{M}$ is a submanifold of $\mathbb R^{m+p}$ and we denote the objective function in \eqref{eq:projection-general} by $g(\x):=\frac{1}{2}\|\mathbf b-\psi(\x)\|^2$.  Suppose that $\{\mathbf x_k\}$ is a sequence in $\mathrm{M}$ generated by an algorithm for \eqref{eq:projection-general} such that $\mathbf x_k\rightarrow\mathbf x_\ast$.  In the following, we present a general theory on the linear convergence rate of $\mathbf x_k\rightarrow\mathbf x_\ast$.  To this end,  we impose the following assumption on $\{\mathbf x_k\}$. 
\begin{assumption}\label{assump:iteration}
For the sequence $\{\mathbf x_k\}$,  the following conditions hold:
  \begin{enumerate}[label=\alph*)]
    \item there exists a constant $\gamma>0$ such that for all sufficiently large $k$
  \begin{equation}\label{eq:sufficient-general}
   g(\x_k)-g(\x_{k+1})  \geq \gamma\|\mathbf x_{k+1}-\mathbf x_k\|^2.
  \end{equation}
    \item there exists a constant $\kappa>0$ such that for all sufficiently large $k$
    \begin{equation}\label{eq:differential-bound}
      \lVert  \left( \dd_{\mathbf x_k}\psi\right)^{\tp}(\psi(\x_k)-\mathbf b)-\mathbf w_k \rVert \leq \kappa\|\mathbf x_{k-1}-\mathbf x_k\|
    \end{equation}
    for some $\mathbf w_k\in \mathsf{N}_{\mathrm{M}}(\x_k)$.
  \end{enumerate}
  \end{assumption}
\begin{lemma}[Linear Convergence Rate]\label{lem:linear}
Assume $\mathrm{M}\subseteq \mathbb{R}^{m + p}$ is a submanifold.  Let $\mathrm{M}_1\subseteq \mathrm{M}$ be a closed subset such that $\psi(\mathrm{M}\setminus\mathrm{M}_1)$ is a smooth manifold and $\dd\psi : \mathsf{T}_{\mathrm{M}\setminus\mathrm{M}_1}(\mathbf{x}) \rightarrow\operatorname{T}_{\psi(\mathrm{M}\setminus\mathrm{M}_1)} (\psi(\mathbf{x}))$ is surjective for every $\mathbf{x} \in \mathrm{M}\setminus\mathrm{M}_1$.  Suppose that $\{\x_k\}_{k=1}^{\infty} \subseteq \mathrm{M}\setminus\mathrm{M}_1$ is a sequence that satisfies Assumption~\ref{assump:iteration} and converges to a critical point $\x_*\in \mathrm{M}\setminus\mathrm{M}_1$ of \eqref{eq:projection-general}.  If $\psi(\x_*)$ is a nondegenerate critical point of the problem 
 \begin{equation}\label{lem:linear:eq1}
  \min_{\mathbf y\in \psi(\mathrm{M}\setminus\mathrm{M}_1)}\frac{1}{2}\|\mathbf y-\mathbf b\|^2,
  \end{equation}
then the convergence rate is $R$-linear.
  \end{lemma}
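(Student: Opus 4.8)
The plan is to transport the problem from $\mathrm{M}\setminus\mathrm{M}_1$ down to the smooth manifold $\psi(\mathrm{M}\setminus\mathrm{M}_1)$, where classical \L{}ojasiewicz-type arguments apply, and then pull the resulting estimates back up. Set $Y \coloneqq \psi(\mathrm{M}\setminus\mathrm{M}_1)$ and let $h(\mathbf y) \coloneqq \tfrac12\|\mathbf y-\mathbf b\|^2$ so that $g = h\circ\psi$ on $\mathrm{M}\setminus\mathrm{M}_1$. Since $\psi(\x_*)$ is a nondegenerate critical point of $h$ on $Y$, Proposition~\ref{prop:lojasiewicz} gives a neighborhood and a constant $\mu>0$ with $\|\operatorname{grad}(h)(\mathbf y)\| \ge \mu\,|h(\mathbf y)-h(\psi(\x_*))|^{1/2}$ for $\mathbf y$ near $\psi(\x_*)$; for large $k$ this applies at $\mathbf y = \psi(\x_k)$. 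The first task is to relate $\|\operatorname{grad}(h)(\psi(\x_k))\|$ to $\|\x_{k-1}-\x_k\|$: the Riemannian gradient of $h$ on $Y$ at $\psi(\x_k)$ is the orthogonal projection of $\psi(\x_k)-\mathbf b$ onto $\mathsf{T}_Y(\psi(\x_k)) = \dd_{\x_k}\psi(\mathsf{T}_{\mathrm{M}}(\x_k))$, which by definition of the adjoint equals, up to the surjective linear map $\dd_{\x_k}\psi$, the quantity $(\dd_{\x_k}\psi)^\tp(\psi(\x_k)-\mathbf b)$ modulo $\ker\big((\dd_{\x_k}\psi)^\tp\big)$; Assumption~\ref{assump:iteration}(b) then bounds the distance from this vector to $\mathsf{N}_{\mathrm{M}}(\x_k)$ by $\kappa\|\x_{k-1}-\x_k\|$. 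Using Lemma~\ref{prop:unified-hoffman} (the unified Hoffman error bound at $\dd_{\x_k}\psi$, valid since $\dd\psi$ is surjective off $\mathrm{M}_1$), together with continuity of $\x\mapsto\tau(\dd_{\x}\psi)$ near $\x_*$, one obtains a uniform constant $C$ with $\|\operatorname{grad}(h)(\psi(\x_k))\| \le C\|\x_{k-1}-\x_k\|$ for all large $k$.

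Combining the \L{}ojasiewicz inequality with this bound yields
\begin{equation}\label{plan:loj}
|g(\x_k)-g(\x_*)|^{1/2} \;=\; |h(\psi(\x_k))-h(\psi(\x_*))|^{1/2} \;\le\; \frac{C}{\mu}\,\|\x_{k-1}-\x_k\|.
\end{equation}
Next I would feed this into the standard telescoping argument. Write $\epsilon_k \coloneqq g(\x_k)-g(\x_*) \ge 0$ (monotone nonincreasing by Assumption~\ref{assump:iteration}(a), with $\epsilon_k\to 0$ by continuity since $\x_k\to\x_*$). From \eqref{eq:sufficient-general}, $\gamma\|\x_{k+1}-\x_k\|^2 \le \epsilon_k-\epsilon_{k+1}$, and from \eqref{plan:loj}, $\epsilon_k \le (C/\mu)^2\|\x_{k-1}-\x_k\|^2$. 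Chaining these with the concavity trick $\sqrt{\epsilon_k}-\sqrt{\epsilon_{k+1}} \ge (\epsilon_k-\epsilon_{k+1})/(2\sqrt{\epsilon_k})$ produces a contraction of the form $\|\x_{k+1}-\x_k\| \le \rho\,\|\x_k-\x_{k-1}\|$ for some $\rho\in(0,1)$ depending only on $\gamma, C, \mu$ — or more directly one shows $\sqrt{\epsilon_{k+1}} \le \theta\sqrt{\epsilon_k}$ with $\theta<1$, hence $\epsilon_k$ decays geometrically. Summing the step bounds $\|\x_{k+1}-\x_k\| \le \rho^{k}\|\x_1-\x_0\|$ over a tail gives $\|\x_k-\x_*\| \le \sum_{j\ge k}\|\x_{j+1}-\x_j\| \le \rho^{k}/(1-\rho)\cdot\|\x_1-\x_0\|$, which is exactly $R$-linear convergence.

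I expect the main obstacle to be the translation step in the first paragraph: carefully identifying $\|\operatorname{grad}(h)(\psi(\x_k))\|$ with a controlled function of $(\dd_{\x_k}\psi)^\tp(\psi(\x_k)-\mathbf b)$ and then of $\|\x_{k-1}-\x_k\|$. The subtlety is that $\psi$ is only a local diffeomorphism onto $Y$ (not globally injective), so one must work in a chart around $\x_*$ where $\psi$ is an embedding; there the pushforward $\dd_{\x_k}\psi$ restricted to $\mathsf{T}_{\mathrm{M}}(\x_k)$ is a linear isomorphism onto $\mathsf{T}_Y(\psi(\x_k))$, but its operator norm and the norm of its inverse must be bounded uniformly for large $k$ — this is where the Hoffman error bound being \emph{unified} (i.e.\ $\tau(\dd_{\x}\psi)$ locally bounded, guaranteed by Proposition~\ref{prop:unified-hoffman}) does the essential work, converting the intrinsic gradient estimate into the extrinsic quantity appearing in Assumption~\ref{assump:iteration}(b). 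Once that uniform comparison is in hand, the remaining \L{}ojasiewicz/telescoping machinery is routine and essentially follows the pattern of \cite{hu2023linear}.
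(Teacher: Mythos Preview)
Your plan is correct and follows the paper's own argument closely: apply \L{}ojasiewicz on the image manifold $Y=\psi(\mathrm{M}\setminus\mathrm{M}_1)$, use Assumption~\ref{assump:iteration}(b) together with the unified Hoffman bound (Proposition~\ref{prop:unified-hoffman}) to control $\lVert\operatorname{Proj}_{\mathsf{T}_Y(\psi(\x_k))}(\psi(\x_k)-\mathbf b)\rVert$ by $\|\x_k-\x_{k-1}\|$, deduce Q-linear decay of $g(\x_k)-g(\x_*)$, and conclude R-linear convergence of the iterates. One small correction: the hypothesis only gives that $\dd_{\x_k}\psi$ is \emph{surjective} onto $\mathsf{T}_Y(\psi(\x_k))$, not that it is a linear isomorphism --- the kernel may be nontrivial, and Hoffman's bound is precisely what lets you select a preimage of controlled norm (the paper makes this explicit by writing $\mathbf v_k=\mathbf v_k'-\mathbf v_k''$ with $\mathbf v_k''\in\ker(\dd_{\x_k}\psi)$ and then pairing $\mathbf v_k$ against $(\dd_{\x_k}\psi)^\tp(\psi(\x_k)-\mathbf b)-\mathbf z_k\in\mathsf{N}_{\mathrm{M}}(\x_k)$ to extract $\lVert\operatorname{Proj}_{\mathsf{T}_Y}(\psi(\x_k)-\mathbf b)\rVert^2$).
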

  
\begin{proof} 
Since $\x_k\rightarrow\x_* \in \mathrm{M}\setminus\mathrm{M}_1$ and $\mathrm{M}_1$ is a closed subset,  clearly $\x_k\in\mathrm{M}\setminus\mathrm{M}_1$ for sufficiently large $k$.  As we are only concerned with the local convergence rate,  we may assume that both $\{\x_k\}_{k=1}^{\infty}$ and $\x_\ast$ lie in $\mathrm{M}\setminus\mathrm{M}_1$.  By assumption, $\mathbf y_* \coloneqq \psi(\x_*)$ is  a nondegenerate critical point of \eqref{lem:linear:eq1}.  Thus,  Proposition~\ref{prop:lojasiewicz} implies the existence of a constant $\mu > 0$ such that
  \begin{equation}\label{eq:loj-gradient}
  \|\operatorname{Proj}_{\mathsf{T}_{\psi(\mathrm{M}\setminus\mathrm{M}_1)}( \mathbf y ) }( \mathbf y-\mathbf b)\|\geq \mu |g(\x)-g(\x_*)|^{\frac{1}{2} }
  \end{equation}
where $\operatorname{Proj}_{\mathsf{T}_{\psi(\mathrm{M}\setminus\mathrm{M}_1)}( \mathbf y ) }$ is the projection onto $\mathsf{T}_{\psi(\mathrm{M}\setminus\mathrm{M}_1)}( \mathbf y )$, whenever $\x$ is sufficiently close to $\x_*$ and $\mathbf{y} \coloneqq \psi (\mathbf{x})$.  

Similarly,  we denote $\mathbf{y}_k \coloneqq \psi (\mathbf{x}_k)$.  By \eqref{eq:differential-bound},  we have $  (\dd_{\mathbf x_k} \psi)^{\tp}(\mathbf{y}_k-\mathbf b)-\mathbf z_k\in \mathsf{N}_{\mathrm{M}}(\x_k) $ where $\|\z_k\|
\leq \kappa\|\mathbf x_{k-1}-\mathbf x_k\|$ for some constant $\kappa > 0$.  The surjectivity of $\dd \psi$ implies $\operatorname{Proj}_{\mathsf{T}_{\psi(\mathrm{M}\setminus\mathrm{M}_1)}( \y_k )} ( \y_k - \mathbf b)=\operatorname{d}_{{\x_k}}\psi(\mathbf v_k')$ for some $\mathbf v_k'\in \mathsf{T}_{\mathrm{M}\setminus\mathrm{M}_1}(\x_k)$.  Since $\y_*$ is a critical point of \eqref{lem:linear:eq1} and $\y_k\rightarrow\y_*$,  we must have $  \operatorname{Proj}_{\mathsf{T}_{ \psi(\mathrm{M}\setminus\mathrm{M}_1)}( \mathbf y_k)} (\mathbf y_k-\mathbf b)\rightarrow 0$.  Thus, for sufficiently large $k$,  Proposition~\ref{prop:unified-hoffman} ensures the existence of $\tau>0$ and $\mathbf v_k^{''}\in\operatorname{ker}(\operatorname{d}_{\x_k}\psi)$ such that $\| \mathbf v_k  \|\leq\tau\|\operatorname{Proj}_{\mathsf{T}_{\mathbf \y_k}(\psi(\mathrm{M}\setminus\mathrm{M}_1))} (\y_k-\mathbf b)\|$ where $\mathbf v_k \coloneqq \mathbf v_k'-\mathbf v_k^{''}$.  This implies  
\[
0=\langle  \big(\operatorname{d}_{\x_k}\psi)^{\tp}(\mathbf y_k-\mathbf b)-\mathbf z_k,-\mathbf v_k\rangle 
=\langle\mathbf z_k, \mathbf v_k\rangle-\|\operatorname{Proj}_{\mathsf{T}_{\mathbf y_k}(\psi(\mathrm{M}\setminus\mathrm{M}_1))} (\mathbf y_k-\mathbf b)\|^2. 
\]

We notice that 
\[   
|\langle\mathbf z_k, \mathbf v_k\rangle|
    \leq\tau\|\mathbf z_k\|\| \operatorname{Proj}_{\mathsf{T}_{\mathbf y_k}(\psi(\mathrm{M}\setminus\mathrm{M}_1))} (\mathbf y_k-\mathbf b)\|
\leq \tau \kappa \|\x_k-\x_{k-1}\|\| \operatorname{Proj}_{\mathsf{T}_{\mathbf y_k}(\psi(\mathrm{M}\setminus\mathrm{M}_1))} (\mathbf y_k-\mathbf b)\|. 
\]  
Hence,  we obtain from \eqref{eq:sufficient-general} that
  \begin{equation}\label{eq:linear-ineq}
    \|\operatorname{Proj}_{\mathsf{T}_{\mathbf y_k}(\psi(\mathrm{M}\setminus\mathrm{M}_1))} (\mathbf y_k-\mathbf b)\|\leq \tau\kappa \|\x_k-\x_{k-1}\|\leq \zeta\big(g(\x_{k-1})-g(\x_k)\big)^\frac{1}{2},
  \end{equation}
where $\zeta \coloneqq \frac{\tau\kappa}{\sqrt{\gamma}}$.  

Combining \eqref{eq:loj-gradient} with \eqref{eq:linear-ineq} , we derive 
  \[
  (\mu^2 +\zeta^2)(g(\x_k)-g(\x_*))\leq \zeta^2(g(\x_{k-1})-g(\x_*)).
  \]
  This is the Q-linear convergence of the objective function residual. The monotonicity of this sequence and the sufficient decrease property of the iteration sequence then imply the R-linear convergence of the iteration sequence. A proof can be found in \cite[Theorem~4]{hu2023linear}. 
  \end{proof}
\begin{theorem}[Generic Linear Convergence Rate]\label{thm:linear}
Assume that $\mathrm{M}\subseteq \mathbb{R}^{m + p}$ is a smooth variety,  $\psi: \mathrm{M} \to \mathbb{R}^n$ is a polynomial map and $\mathrm{M}_1\subseteq \mathrm{M}$ is a subvariety.  Let $\{\x_k\}_{k=1}^{\infty} \subseteq \mathrm{M}\setminus\mathrm{M}_1$ be a sequence and let $\mathbf{b} \in \mathbb{R}^n$ be a generic vector.  Suppose that the following conditions hold:
\begin{enumerate}[label=\alph*)]
\item $\psi(\mathrm{M}\setminus\mathrm{M}_1)$ is smooth and $\dd\psi : \mathsf{T}_{\mathrm{M}\setminus\mathrm{M}_1}(\mathbf{x}) \rightarrow\operatorname{T}_{\psi(\mathrm{M}\setminus\mathrm{M}_1)} (\psi(\mathbf{x}))$ is surjective for every $\mathbf{x} \in \mathrm{M}\setminus\mathrm{M}_1$.  \label{thm:linear:itema}
\item The triple $(\mathrm{M},  \psi,   \mathrm{M}_1)$ satisfies Assumption~\ref{assump:setm}.  \label{thm:linear:itemb}
\item The sequence $\{\mathbf{x}_k\}$ converges to a critical point $\x_*\in \mathrm{M}$ of \eqref{eq:projection-general}. \label{thm:linear:itemc}
\item The sequence $\{\mathbf{x}_k\}$ satisfies Assumption~\ref{assump:iteration}.  \label{thm:linear:itemd}
\end{enumerate}
Then the convergence rate of $\mathbf{x}_k \rightarrow \mathbf{x}_\ast$ is $R$-linear. 
\end{theorem}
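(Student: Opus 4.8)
The plan is to derive Theorem~\ref{thm:linear} as a direct combination of the location theorem (Theorem~\ref{thm:generic-critical-location}) and the local linear convergence lemma (Lemma~\ref{lem:linear}), with the only extra work being the verification that the hypothesis ``$\psi(\x_*)$ is a \emph{nondegenerate} critical point'' of the projection problem \eqref{lem:linear:eq1} holds for a generic $\mathbf b$. First I would invoke condition~\ref{thm:linear:itemb} together with Theorem~\ref{thm:generic-critical-location}: since the triple $(\mathrm{M}, \mathrm{M}_1, \psi)$ satisfies Assumption~\ref{assump:setm}, there is a Zariski-open dense set of $\mathbf b \in \mathbb{R}^n$ for which every critical point of \eqref{eq:projection-general} lies in $\mathrm{M}\setminus \mathrm{M}_1$. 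In particular the limit point $\x_*$ of condition~\ref{thm:linear:itemc}, being a critical point, satisfies $\x_* \in \mathrm{M}\setminus\mathrm{M}_1$ for generic $\mathbf b$.

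Next I would treat the nondegeneracy condition. By condition~\ref{thm:linear:itema}, $\psi(\mathrm{M}\setminus\mathrm{M}_1)$ is a smooth submanifold of $\mathbb{R}^n$, so Lemma~\ref{lem:generic-morse} (projection is generically Morse) applies: for a generic $\mathbf b$, the squared distance function $\mathbf y \mapsto \|\mathbf y - \mathbf b\|^2$ is a Morse function on $\psi(\mathrm{M}\setminus\mathrm{M}_1)$, hence all of its critical points are nondegenerate. Since $\x_*$ is a critical point of \eqref{eq:projection-general} lying in $\mathrm{M}\setminus\mathrm{M}_1$, and since by the surjectivity in \ref{thm:linear:itema} the point $\mathbf y_* \coloneqq \psi(\x_*)$ is a critical point of \eqref{lem:linear:eq1} (the tangent-space condition $(\psi(\x_*)-\mathbf b)\perp \dd\psi(\mathsf{T}_{\mathrm{M}}(\x_*))$ pushes forward to $(\mathbf y_* - \mathbf b)\perp \mathsf{T}_{\psi(\mathrm{M}\setminus\mathrm{M}_1)}(\mathbf y_*)$), it follows that $\mathbf y_*$ is a nondegenerate critical point of \eqref{lem:linear:eq1}. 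The small subtlety here is that $\psi(\mathrm{M}\setminus\mathrm{M}_1)$ may fail to be closed; but Lemma~\ref{lem:generic-morse} and Proposition~\ref{prop:lojasiewicz} only require a submanifold structure and hold locally near $\mathbf y_*$, so this causes no difficulty.

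Finally I would assemble the pieces. Taking $\mathbf b$ in the intersection of the two generic (Zariski-open dense) sets produced above — one from Theorem~\ref{thm:generic-critical-location} guaranteeing $\x_* \in \mathrm{M}\setminus\mathrm{M}_1$, the other from Lemma~\ref{lem:generic-morse} guaranteeing the Morse property on $\psi(\mathrm{M}\setminus\mathrm{M}_1)$ — all the hypotheses of Lemma~\ref{lem:linear} are met: $\mathrm{M}_1$ is a subvariety hence closed in $\mathrm{M}$, $\psi(\mathrm{M}\setminus\mathrm{M}_1)$ is a smooth manifold with $\dd\psi$ surjective onto its tangent spaces by \ref{thm:linear:itema}, the sequence $\{\x_k\}\subseteq \mathrm{M}\setminus\mathrm{M}_1$ satisfies Assumption~\ref{assump:iteration} by \ref{thm:linear:itemd} and converges to $\x_*\in\mathrm{M}\setminus\mathrm{M}_1$, and $\psi(\x_*)$ is a nondegenerate critical point of \eqref{lem:linear:eq1}. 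Lemma~\ref{lem:linear} then yields the $R$-linear convergence of $\x_k \to \x_*$, completing the proof.

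I expect no serious obstacle: the theorem is essentially a packaging result, and the main point to be careful about is the logical bookkeeping of ``generic'' — namely that a finite intersection of Zariski-open dense subsets of $\mathbb{R}^n$ is again Zariski-open dense — together with the observation that the non-closedness of $\psi(\mathrm{M}\setminus\mathrm{M}_1)$ is harmless because every tool used (the Morse lemma, the {\L}ojasiewicz inequality, Proposition~\ref{prop:unified-hoffman}) is local around $\x_*$ and $\mathbf y_* = \psi(\x_*)$.
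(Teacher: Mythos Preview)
Your proposal is correct and follows essentially the same approach as the paper: use Theorem~\ref{thm:generic-critical-location} via condition~\ref{thm:linear:itemb} to place $\x_*$ in $\mathrm{M}\setminus\mathrm{M}_1$, invoke Lemma~\ref{lem:generic-morse} on the smooth manifold $\psi(\mathrm{M}\setminus\mathrm{M}_1)$ to get nondegeneracy of $\psi(\x_*)$, verify that $\psi(\x_*)$ is indeed a critical point of \eqref{lem:linear:eq1} using the surjectivity in~\ref{thm:linear:itema} (the paper phrases this step via Lemma~\ref{lem:critical point}), and conclude by Lemma~\ref{lem:linear}. Your additional remarks on the intersection of generic sets and the local nature of the tools are accurate and, if anything, make the argument slightly more explicit than the paper's own version.
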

  
\begin{proof}   
By \ref{thm:linear:itemb} and Theorem~\ref{thm:generic-critical-location},  we have $\mathbf{x}_\ast \in  \mathrm{M}\setminus\mathrm{M}_1$.  On the other hand, a critical point $\mathbf{y}$ of  \eqref{lem:linear:eq1} is characterized by
$\mathbf y-\mathbf b\in \mathsf{N}_{\psi(\mathrm{M}\setminus\mathrm{M}_1)}(\mathbf y)$.  By \ref{thm:linear:itema}, we have $(\operatorname{d}_{\x}\psi)^\tp (\mathbf y-\mathbf b)\in\mathsf{N}_{\mathrm{M}\setminus\mathrm{M}_1}(\x)$ if $\mathbf{y} = \psi (\mathbf{x})$. By Lemma~\ref{lem:critical point},  $\mathbf{y}_{\ast} \coloneqq \psi(\mathbf{x}_\ast)$ is a critical point. Furthermore,  Lemma~\ref{lem:generic-morse} implies that $\mathbf{y}_{\ast}$ is a nondegenerate critical point of \eqref{lem:linear:eq1} since $\mathbf b$ is generic.  The proof is completed by applying Lemma~\ref{lem:linear}. 
\end{proof}

\subsection{Application to the convergence analysis of the iAPD-ALS Algorithm}\label{sec:iapd-als}
Motivated by the APD-ALS algorithm presented in \cite{GC-19},  we propose Algorithm~\ref{algo} to solve the optimization problem
\begin{flalign}\label{eq:LRPOTA}
  &\text{(LRPOTA)}\ \ \ \ \ \ \begin{array}{rl}\min& \frac{1}{2}\left\lVert \mathcal A - \sum_{j=1}^r \lambda_j \mathbf{u}^{(1)}_j \otimes  \cdots \otimes \mathbf{u}^{(k)}_j \right\rVert^2\\
  \text{s.t.}  & U^{(i)} \in \V(r,n_i) \text{~for~} 1 \le i \le s,\\
  & U^{(i)}\in\B(r,n_i) \text{~for~} s+1 \le i \le k,\\
  & \lambda_j \in\mathbb R \text{~for~} 1 \le j \le r.
  \end{array}&
  \end{flalign}
We observe that the parameter $\lambda \coloneqq (\lambda_1,\dots,  \lambda_r) \in \mathbb{R}^r$ in \eqref{eq:LRPOTA} can be eliminated by considering the optimality conditions.  Indeed,  the optimal $\lambda$ is given by 
\begin{equation}\label{eq:optimal-lambda}
\lambda_j:=\Big(\big(\big(U^{(1)}\big)^\tp ,\dots,\big(U^{(k)}\big)^\tp \big)\cdot\mathcal A\Big)_{{j,\dots,j}},\quad  1 \le j \le r,
\end{equation}
where $\big((U^{(1)})^\tp,\dots,(U^{(k)})^\tp\big)\cdot\mathcal{A}$ is the tensor in $\mathbb{R}^{k\times \cdots \times k}$ obtained by linearly extending
\[
  \big((U^{(1)})^\tp,\dots,(U^{(k)})^\tp\big) \cdot \left( \mathbf{a}_1 \otimes \cdots \otimes \mathbf{a}_k \right) = \left( (U^{(1)})^\tp \mathbf{a}_1 \right)\otimes \cdots \otimes \left( (U^{(k)})^\tp\mathbf{a}_k \right).
\]
We adopt this simplification in Algorithm~\ref{algo} and avoid explicitly presenting the updates of $\lambda$.  Algorithm~\ref{algo} is called the \emph{improved APD-ALS} algorithm,  abbreviated as iAPD-ALS.  The goal of this subsection is to establish the linear convergence rate of Algorithm~\ref{algo} by Theorem~\ref{thm:linear}.

In the sequel,  the index $1 \le i \le k$ (resp.  $1 \le j \le r$,  $p \in \mathbb{N}$) labels factor matrices (resp.  components of the tensor decomposition,  iterations).  For ease of reference,  we list our notations below:
\begin{itemize}
\item[$\diamond$] $\mathcal{A}\tau: \mathbb{R}^{n_1} \times \cdots \times \mathbb{R}^{n_k} \to \mathbb{R}$ is the function defined by  
\[
\mathcal{A}\tau (\mathbf{u}_1,\dots, \mathbf{u}_k) = \langle\mathcal A,\mathbf u_1\otimes\dots \otimes\mathbf u_k\rangle.
\]
\item[$\diamond$] $\mathcal{A}\tau_i: \mathbb{R}^{n_1} \times \cdots \times \mathbb{R}^{n_k} \to \mathbb{R}^{n_i}$ is the map defined by  
\[
\langle\mathcal{A}\tau_i (\mathbf{u}_1,\dots, \mathbf{u}_k),\mathbf{u}_i\rangle = \mathcal{A}\tau (\mathbf{u}_1,\dots, \mathbf{u}_k).
\]
\item[$\diamond$] Given a matrix $Y \in \mathbb{R}^{n \times m}$ where $n \ge m$,  we denote by $\operatorname{Polar}(Y)$ the set of orthonormal factors of polar decompositions of $Y$ (cf.  Proposition~\ref{lem:polar}). 
\item[$\diamond$] $\{U_{[p]}\}_{p=0}^{\infty}$ is an iteration sequence generated by Algorithm~\ref{algo},  where $U_{[p]} \coloneqq\left( U^{(1)}_{[p]},  \dots,  U^{(k)}_{[p]} \right)$.  
\item[$\diamond$] $\mathbf u^{(i)}_{j,[p]} \in \mathbb{R}^{n_i}$ is the $j$-th column of the matrix $U^{(i)}_{[p]} \in \mathbb{R}^{n_i \times r}$.  
\item[$\diamond$] $\mathbf x^{i}_{j,[p]} \coloneqq \left( \mathbf u^{(1)}_{j,[p]},\dots,\mathbf u^{(i-1)}_{j,[p]}, \mathbf u^{(i)}_{j,[p-1]}, \mathbf u^{(i+1)}_{j,[p-1]},\dots,\mathbf u^{(k)}_{j,[p-1]} \right)\in \mathbb{R}^{n_1} \times \cdots \times \mathbb{R}^{n_k}$.
\item[$\diamond$] $\lambda^{i-1}_{j,[p]} \coloneqq \mathcal A\tau\left( \mathbf x^{i}_{j,[p]} \right) \in \mathbb{R}$ and $\Lambda^{(i)}_{[p]} \coloneqq \operatorname{diag}\left( \lambda^{i-1}_{1,[p]},\dots,\lambda^{i-1}_{r,[p]} \right) \in \mathbb{R}^{r\times r}$.
\item[$\diamond$] $\mathbf v^{(i)}_{j,[p]} \coloneqq \mathcal A\tau_i \left( \mathbf x^{i}_{j,[p]}\right) \in \mathbb{R}^{n_i}$ and $ V^{(i)}_{[p]} \coloneqq \begin{bmatrix}\mathbf v^{(i)}_{1,[p]},&\dots,&\mathbf v^{(i)}_{r,[p]}\end{bmatrix} \in \mathbb{R}^{n_i \times r}$.
\item[$\diamond$] For each $({W}, \lambda) = (W^{(1)},\dots, W^{(k)},\lambda) \in \mathbb{R}^{n_1\times r} \times \cdots \times \mathbb{R}^{n_k \times r} \times {\mathbb{R}^r}$,  we define 
\[
\| (W,\lambda) \|_F^2 \coloneqq \sum_{i=1}^k\| W^{(i)}\|_F^2+ \lVert \lambda \rVert^2.
\]
\item[$\diamond$] We define $g: \B(r,n_{1}) \times \cdots \times \B(r,n_k) \times\mathbb R^r\to \mathbb{R}$ by 
\[
 g(U,\lambda):= \frac{1}{2}\left\lVert \mathcal{A} - \sum_{j=1}^r \lambda_j \mathbf{u}^{(1)}_j \otimes  \cdots \otimes \mathbf{u}^{(k)}_j \right\rVert^2.
\]
\item[$\diamond$] $\lambda_{[p]}$ is defined by \eqref{eq:optimal-lambda} from $U_{[p]}$. 
\end{itemize}
\begin{algorithm}[!htbp]
\caption{iAPD-ALS algorithm for low rank partially orthogonal tensor approximation problem}\label{algo}
\begin{algorithmic}[1]
\renewcommand{\algorithmicrequire}{\textbf{Input}}
\Require
A nonzero tensor $\mathcal A\in\mathbb R^{n_1}\otimes\dots\otimes\mathbb R^{n_k}$, a positive integer $r$ and a proximal parameter $\varepsilon > 0$.
\renewcommand{\algorithmicrequire}{\textbf{Output}}
\Require
a partially orthogonal tensor approximation of $\mathcal{A}$
\renewcommand{\algorithmicrequire}{\textbf{Initialization}}
\Require
Choose $ U_{[0]}:= \left( U^{(1)}_{[0]},\dots,U^{(k)}_{[0]}\right) \in \V(r,n_1) \times\dots\times \V(r,n_s)\times\B(r,n_{s+1})\times\dots\times \B(r,n_k)$ such that
$2g \left( U_{[0]},\lambda_{[0]} \right)<\|\mathcal{A}\|^2$, a truncation parameter $\kappa\in \left( 0,\sqrt{\big(\|\mathcal{A}\|^2-2g( U_{[0]},\lambda_{[0]})\big)/r} \right)$ and set $p:=1$
\While{not converge}
\For{$i=1,\dots, k$}
\If {$i\le s$} \Comment{alternating polar decompositions}
\State{Compute $U_{[p]}^{(i)}\in\operatorname{Polar}\left( V^{(i)}_{[p]}\Lambda^{(i)}_{[p]}\right)$ and $S^{(i)}_{[p]}:= \left( U^{(i)}_{[p]}\right)^\tp V^{(i)}_{[p]}\Lambda^{(i)}_{[p]}$.}
\If{$\sigma^{(i)}_{r,[p]}=\lambda_{\min}(S^{(i)}_{[p]})<\varepsilon$}  \Comment{proximal correction}
\State {Update $U_{[p]}^{(i)} \in \operatorname{Polar}\left( V^{(i)}_{[p]}\Lambda^{(i)}_{[p]}+\varepsilon U^{(i)}_{[p-1]} \right)$ and $S^{(i)}_{[p]} \coloneqq \left( U^{(i)}_{[p]} \right)^\tp \left( V^{(i)}_{[p]}\Lambda^{(i)}_{[p]}+\varepsilon U^{(i)}_{[p-1]}\right)$} 
\EndIf
\EndIf
\If{$\left\lvert \left( \left(U^{(s)}_{[p]}\right)^\tp V^{(s)}_{[p]}\right)_{jj} \right\rvert<\kappa$ for some $j\in J\subseteq\{1,\dots,r\}$}  \Comment{truncation}
\If{$i\in\{1,\dots,s\}$}
\State{Update $U^{(i)}_{[p]}:=\left(U^{(i)}_{[p]}\right)_{\{1,\dots,r\}\setminus J}$.}
\Else
\State{Update $U^{(i)}_{[p-1]}:=\left(U^{(i)}_{[p-1]}\right)_{\{1,\dots,r\}\setminus J}$.}
\EndIf
\State{Update $r:=r-|J|$.}
\EndIf
\If{$i=s+1,\dots,k$}
\Comment{alternating least squares}
\For{$j=1,\dots,r$}
\State{Compute
\begin{equation}\label{eq:iteration0}
\mathbf u^{(i)}_{j,[p]}:=
\sgn\left( \lambda^{i-1}_{j,[p]}\right) \frac{\mathcal A\tau_i \left( \mathbf x^{i}_{j,[p]}\right)}{\left\lVert \mathcal A\tau_i \left( \mathbf x^{i}_{j,[p]}\right) \right\rVert}.
\end{equation}
}
\EndFor
\EndIf
\EndFor
\State{Update $p:=p+1$.} \Comment{iteration number}
\EndWhile
\end{algorithmic}
\end{algorithm} 
Before we proceed,  we investigate some basic properties of Algorithm~\ref{algo}.  Firstly,  by a similar reasoning to that in \cite{hu2023linear,Y-19},  one may establish the global convergence of Algorithm~\ref{algo}.  Readers are referred to \cite{YH-22} for a detailed proof.
\begin{proposition}\label{prop:global}
Given a sequence $\{ U_{[p]}\}$ generated by Algorithm~\ref{algo}, the sequence $\{g( U_{[p]})\}$ decreases monotonically {for all $p\geq N_0$ with some positive $N_0$} and hence converges, the sequence $\{ (U_{[p]},\lambda_{[p]})\}$ is bounded and converges to a KKT point of problem~\eqref{eq:LRPOTA}.
  \end{proposition}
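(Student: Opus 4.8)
The plan is to run the standard Kurdyka--\L ojasiewicz argument for block-coordinate descent, adapted to the truncation and proximal-correction steps of Algorithm~\ref{algo}.  The three ingredients are a sufficient-decrease inequality, a relative-error (gradient) bound, and the \L ojasiewicz gradient inequality for the semialgebraic objective; throughout, $g(U_{[p]})$ denotes $g$ evaluated with $\lambda$ profiled out via \eqref{eq:optimal-lambda}.  The computation parallels \cite{hu2023linear,Y-19} and is carried out in full in \cite{YH-22}.

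First I would show that truncation occurs only finitely often: each execution strictly decreases the positive integer $r$, and the hypotheses $2g(U_{[0]},\lambda_{[0]})<\|\mathcal A\|^2$ and $\kappa<\sqrt{\big(\|\mathcal A\|^2-2g(U_{[0]},\lambda_{[0]})\big)/r}$ guarantee that at least one component always survives, so $r$ stabilizes for all $p\ge N_0$.  On this tail every update is an alternating polar decomposition (possibly with the $\varepsilon$-proximal correction) or the column-wise update \eqref{eq:iteration0}.  For $i\le s$, since $\|Y\|_F^2=r$ is constant on $\V(r,n_i)$, the optimality of the polar factor (Proposition~\ref{lem:polar}) shows that $U^{(i)}_{[p]}$ maximizes $\langle Y,\,V^{(i)}_{[p]}\Lambda^{(i)}_{[p]}\rangle$ over $\V(r,n_i)$ --- or, in the corrected case, the strongly concave surrogate $\langle Y,\,V^{(i)}_{[p]}\Lambda^{(i)}_{[p]}\rangle-\tfrac{\varepsilon}{2}\|Y-U^{(i)}_{[p-1]}\|_F^2$; either way the decrease of $g$ dominates a positive multiple of $\|U^{(i)}_{[p]}-U^{(i)}_{[p-1]}\|_F^2$, with the proximal term supplying the needed curvature precisely when $\sigma^{(i)}_{r,[p]}<\varepsilon$.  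For $i>s$, \eqref{eq:iteration0} is the exact minimizer of $g$ in the single column $\mathbf u^{(i)}_j$ on the unit sphere, so the decrease again dominates $\|\mathbf u^{(i)}_{j,[p]}-\mathbf u^{(i)}_{j,[p-1]}\|^2$.  Summing over blocks yields $g(U_{[p]})-g(U_{[p+1]})\ge\gamma\|U_{[p+1]}-U_{[p]}\|_F^2$ for some $\gamma>0$ and all $p\ge N_0$; hence $\{g(U_{[p]})\}$ is nonincreasing on this tail and, being bounded below by $0$, converges, and telescoping gives $\sum_p\|U_{[p+1]}-U_{[p]}\|_F^2<\infty$, so $\|U_{[p+1]}-U_{[p]}\|_F\to 0$.

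The factor matrices lie in the compact sets $\V(r,n_i)$ and $\B(r,n_i)$, while $\lambda_{[p]}$ is the closed-form expression \eqref{eq:optimal-lambda} in these bounded matrices and the fixed tensor $\mathcal A$, so $\{(U_{[p]},\lambda_{[p]})\}$ is bounded.  Next I would establish the relative-error bound $\|\operatorname{grad} g(U_{[p]})\|\le\kappa'\|U_{[p]}-U_{[p-1]}\|_F$ for large $p$ (the version of Assumption~\ref{assump:iteration}(b) appropriate to this algorithm): subtracting consecutive iterates in the first-order optimality condition of each block update leaves only the proximal corrections $\varepsilon(U^{(i)}_{[p]}-U^{(i)}_{[p-1]})$ and the increments of $V^{(i)}_{[p]}\Lambda^{(i)}_{[p]}$, all $O(\|U_{[p]}-U_{[p-1]}\|_F)$ by smoothness.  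Since $g$ is a polynomial, it is real-analytic and hence satisfies the \L ojasiewicz gradient inequality near every point of the compact closure of the orbit; the standard argument (as in \cite[Theorem~4]{hu2023linear}) then upgrades sufficient decrease, relative error and the \L ojasiewicz inequality to finite length $\sum_p\|U_{[p+1]}-U_{[p]}\|_F<\infty$, so $U_{[p]}$ converges to a single limit $U_\ast$ and \eqref{eq:optimal-lambda} forces $\lambda_{[p]}\to\lambda_\ast$.  Passing to the limit in the block optimality conditions and using $\|U_{[p]}-U_{[p-1]}\|_F\to 0$ to annihilate the proximal terms and the ``$[p]$ versus $[p-1]$'' discrepancies in $V^{(i)},\Lambda^{(i)}$ shows that $(U_\ast,\lambda_\ast)$ is a KKT point of \eqref{eq:LRPOTA}.

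I expect the main obstacle to be the bookkeeping around the truncation and proximal-correction steps: proving that truncation terminates, that the thresholds $\varepsilon$ and $\kappa$ do not destroy a \emph{uniform} sufficient-decrease constant $\gamma$, and that the relative-error bound survives the discrete updates on the Stiefel and oblique factors (where the gradient must be tracked through the polar decomposition).  These verifications are routine but lengthy, and are carried out in detail in \cite{YH-22}.
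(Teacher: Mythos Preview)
Your proposal is correct and follows essentially the same approach the paper intends: the paper does not give a self-contained proof of this proposition but simply states that ``by a similar reasoning to that in \cite{hu2023linear,Y-19}, one may establish the global convergence,'' deferring the details to \cite{YH-22}; your sketch spells out precisely that Kurdyka--\L ojasiewicz block-coordinate-descent argument (finite truncation, sufficient decrease, boundedness, relative-error bound, \L ojasiewicz inequality for the polynomial objective, finite length), and indeed the sufficient-decrease and relative-error ingredients you outline are exactly what the paper isolates separately as Proposition~\ref{prop:subfficient} and Lemma~\ref{lem:subdiff}.
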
 

Next,  we discuss the property of the sufficient decrease of $g$. 
\begin{proposition}[Sufficient Decrease]\label{prop:subfficient}
If the $p+1$-th iteration in Algorithm~\ref{algo} is not a truncation iteration, then we have
\begin{equation}\label{eq:obj-suf}
g( U_{[p]},\lambda_{[p]})-g(U_{[p+1]},\lambda_{[p+1]})\geq \zeta 
\|(U_{[p]},\lambda_{[p]})-(U_{[p+1]},\lambda_{[p+1]})\|_F^2,
\end{equation}
where $\zeta>0$ is some constant. 
\end{proposition}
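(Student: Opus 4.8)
The plan is to prove \eqref{eq:obj-suf} by telescoping the decrease of $g$ over the $k$ block updates carried out during the $(p+1)$-th iteration. For $0\le i\le k$ let $g_i$ be the value of $g$, with $\lambda$ re-optimised through \eqref{eq:optimal-lambda}, at the configuration whose first $i$ factor matrices are their $[p+1]$-iterates and whose remaining $k-i$ are their $[p]$-iterates; thus $g_0=g(U_{[p]},\lambda_{[p]})$ and $g_k=g(U_{[p+1]},\lambda_{[p+1]})$. Since $s\ge1$, at least one factor has orthonormal columns in every such configuration, so the rank-one terms there are orthonormal, \eqref{eq:optimal-lambda} returns the globally optimal coefficient vector, and $g$ with $\lambda$ set to the values $\lambda^{i-1}_{j,[p+1]}$, regarded as a function of the $i$-th factor matrix $W$ with the others frozen, is $\tfrac12\|\mathcal A\|^2-\langle W,V^{(i)}_{[p+1]}\Lambda^{(i)}_{[p+1]}\rangle+\tfrac12\|\Lambda^{(i)}_{[p+1]}\|_F^2$. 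As $g_{i-1}$ is this expression at $W=U^{(i)}_{[p]}$ (for which these $\lambda$-values are optimal) and $g_i$ is at most this expression at $W=U^{(i)}_{[p+1]}$ (re-optimising $\lambda$ never raises $g$), we obtain the fundamental per-block inequality
\[
g_{i-1}-g_i\ \ge\ \bigl\langle U^{(i)}_{[p+1]}-U^{(i)}_{[p]},\,V^{(i)}_{[p+1]}\Lambda^{(i)}_{[p+1]}\bigr\rangle,\qquad 1\le i\le k.
\]
It then suffices to bound the right-hand side below by $c_i\|U^{(i)}_{[p+1]}-U^{(i)}_{[p]}\|_F^2$ with $c_i>0$ independent of $p$; the $\lambda$-part of the norm in \eqref{eq:obj-suf} is afterwards absorbed via $\|\lambda_{[p+1]}-\lambda_{[p]}\|\le L\bigl(\sum_i\|U^{(i)}_{[p+1]}-U^{(i)}_{[p]}\|_F^2\bigr)^{1/2}$, valid because \eqref{eq:optimal-lambda} is multilinear and the factor matrices lie in the compact set $\V(r,n_1)\times\cdots\times\B(r,n_k)$.

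For a block $i\le s$ (an alternating polar decomposition step), when no proximal correction occurs $U^{(i)}_{[p+1]}$ is the orthonormal polar factor of $M_i\coloneqq V^{(i)}_{[p+1]}\Lambda^{(i)}_{[p+1]}$, and I would use the elementary bound: for a polar decomposition $M=QH$ with $H\succeq0$ and any $Y\in\V(r,n)$, $\langle Q-Y,M\rangle\ge\tfrac12\sigma_{\min}(M)\|Q-Y\|_F^2$ --- proved by writing $\langle Q-Y,M\rangle=\tr(BH)$ with $B\coloneqq I-\tfrac12(Y^\tp Q+Q^\tp Y)$, noting $B\succeq0$ since $\|Y^\tp Q\|_{\mathrm{op}}\le1$ and the skew part of $Y^\tp Q$ annihilates the symmetric $H$, and then $\tr(BH)\ge\lambda_{\min}(H)\tr(B)=\tfrac12\sigma_{\min}(M)\|Q-Y\|_F^2$. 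Here $\sigma_{\min}(M_i)=\lambda_{\min}(S^{(i)}_{[p+1]})\ge\varepsilon$ precisely because the proximal correction was not triggered, so $g_{i-1}-g_i\ge\tfrac{\varepsilon}{2}\|U^{(i)}_{[p+1]}-U^{(i)}_{[p]}\|_F^2$. When the proximal correction is triggered, $U^{(i)}_{[p+1]}$ is the polar factor of $M_i+\varepsilon U^{(i)}_{[p]}$, hence $\langle U^{(i)}_{[p+1]}-U^{(i)}_{[p]},\,M_i+\varepsilon U^{(i)}_{[p]}\rangle\ge0$; combining this with $\langle U^{(i)}_{[p+1]}-U^{(i)}_{[p]},\varepsilon U^{(i)}_{[p]}\rangle=-\tfrac{\varepsilon}{2}\|U^{(i)}_{[p+1]}-U^{(i)}_{[p]}\|_F^2$ (both matrices being on $\V(r,n_i)$) gives the same bound $g_{i-1}-g_i\ge\tfrac{\varepsilon}{2}\|U^{(i)}_{[p+1]}-U^{(i)}_{[p]}\|_F^2$.

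For a block $i>s$ (an alternating least squares step), \eqref{eq:iteration0} sets $\mathbf u^{(i)}_{j,[p+1]}=\sgn(\lambda^{i-1}_{j,[p+1]})\mathbf v^{(i)}_{j,[p+1]}/\|\mathbf v^{(i)}_{j,[p+1]}\|$, whence the right-hand side of the fundamental inequality equals $\sum_j\bigl(|\lambda^{i-1}_{j,[p+1]}|\,\|\mathbf v^{(i)}_{j,[p+1]}\|-(\lambda^{i-1}_{j,[p+1]})^2\bigr)$, and using $\|\mathbf u^{(i)}_{j,[p+1]}-\mathbf u^{(i)}_{j,[p]}\|^2=2\bigl(\|\mathbf v^{(i)}_{j,[p+1]}\|-|\lambda^{i-1}_{j,[p+1]}|\bigr)/\|\mathbf v^{(i)}_{j,[p+1]}\|$ it equals $\tfrac12\sum_j|\lambda^{i-1}_{j,[p+1]}|\,\|\mathbf v^{(i)}_{j,[p+1]}\|\,\|\mathbf u^{(i)}_{j,[p+1]}-\mathbf u^{(i)}_{j,[p]}\|^2$. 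To make this at least $\tfrac{\kappa^2}{2}\|U^{(i)}_{[p+1]}-U^{(i)}_{[p]}\|_F^2$ I would bound $|\lambda^{i-1}_{j,[p+1]}|$ from below: the preceding least-squares sub-steps satisfy $|\lambda^{\ell}_{j,[p+1]}|=\|\mathbf v^{(\ell)}_{j,[p+1]}\|\ge|\lambda^{\ell-1}_{j,[p+1]}|$ for $s<\ell\le i-1$, and the non-truncation hypothesis means $|\lambda^{s}_{j,[p+1]}|=\bigl|\bigl((U^{(s)}_{[p+1]})^\tp V^{(s)}_{[p+1]}\bigr)_{jj}\bigr|\ge\kappa$ for all $j$, so $|\lambda^{i-1}_{j,[p+1]}|\ge\kappa$ and then also $\|\mathbf v^{(i)}_{j,[p+1]}\|\ge|\lambda^{i-1}_{j,[p+1]}|\ge\kappa$. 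Summing the $k$ block bounds with $\zeta_0\coloneqq\min\{\varepsilon/2,\kappa^2/2\}$ and absorbing the $\lambda$-part as above yields \eqref{eq:obj-suf} with $\zeta\coloneqq\zeta_0/(1+L^2)$.

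The step I expect to be the main obstacle is not any single estimate but the bookkeeping around it: one must be scrupulous about which intermediate configuration each of $V^{(i)}_{[p+1]}$, $\Lambda^{(i)}_{[p+1]}$, $\mathbf v^{(i)}_{j,[p+1]}$, $\lambda^{i-1}_{j,[p+1]}$ and $S^{(i)}_{[p+1]}$ refers to, and must verify that the proximal threshold $\varepsilon$ and the non-truncation hypothesis genuinely furnish the two uniform lower bounds --- $\sigma_{\min}(M_i)\ge\varepsilon$ on the orthonormal blocks and $|\lambda^{i-1}_{j,[p+1]}|\ge\kappa$ on the non-orthonormal ones --- that keep the constants independent of $p$. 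Once these are secured, the remainder consists of routine manipulations with polar decompositions and with maximisations of linear functionals over the Stiefel manifold and the unit sphere.
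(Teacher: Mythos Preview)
Your proposal is correct and follows essentially the same strategy as the paper: telescope over the $k$ block updates, treat the orthonormal blocks via the polar-decomposition optimality and the proximal safeguard, treat the ALS blocks via the identity linking $\lVert\mathbf u^{(i)}_{j,[p+1]}-\mathbf u^{(i)}_{j,[p]}\rVert^2$ to $\lVert\mathbf v^{(i)}_{j,[p+1]}\rVert-|\lambda^{i-1}_{j,[p+1]}|$ and the non-truncation bound $|\lambda^s_{j,[p+1]}|\ge\kappa$, then absorb the $\lambda$-increment by Lipschitz continuity of \eqref{eq:optimal-lambda}; you arrive at the same constant $\zeta_0=\min\{\varepsilon,2\kappa^2\}/2$ as the paper. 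The only substantive difference is that for the non-proximal polar case the paper defers to an external error bound \cite[Theorem~9]{hu2023linear}, whereas you give a self-contained argument via $\langle Q-Y,M\rangle\ge\tfrac12\sigma_{\min}(M)\lVert Q-Y\rVert_F^2$, which is a nice addition.
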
 

\begin{proof}
  To obtain $U_{[p]}$ from $U_{[p-1]}$, Algorithm~\ref{algo} performs two different types of operations, depending on whether $i \le s$ or $i > s$:
\begin{enumerate}[label=\alph*)]
\item \label{prop:subfficient:item1} $1 \le i \le s$: $U^{(i)}_{[p-1]}$ is updated by computing the polar decomposition 
  \begin{equation}\label{eq:algorithm1-polar}
  U^{(i)}_{[p]} S^{(i)}_{[p]}  = V^{(i)}_{[p]}\Lambda^{(i)}_{[p]}+ \alpha U^{(i)}_{[p-1]},
  \end{equation}
  where $\alpha = \varepsilon$ or $0$ depending on whether or not there is a proximal correction.  
  According to Proposition~\ref{lem:polar},  this is achieved by solving an optimization problem.  Indeed,  if $\sigma_{\min}(S^{(i)}) <\varepsilon$,   then $\alpha = \varepsilon$.  We consider the matrix optimization problem
  \begin{equation}\label{eq:proximal}
  \begin{array}{rl}
  \max&\langle V^{(i)}_{[p]}\Lambda^{(i)}_{[p]},H\rangle-\frac{\varepsilon}{2}\|H-{U}^{(i)}_{[p-1]}\|_F^2\\
  \text{s.t.}& H\in V(r,n_{i}) 
  \end{array}
  \end{equation}
  Since $H,{U}^{(i)}_{[p-1]} \in {V(r,n_{i})}$, we must have $\frac{\varepsilon}{2}\|H- {U}^{(i)}_{[p-1]}\|_F^2=\varepsilon r-\varepsilon\langle {U}^{(i)}_{[p-1]},H\rangle$.  Thus,  Proposition~\ref{lem:polar} implies that a global maximizer of \eqref{eq:proximal} is given by an orthonormal factor of the polar decomposition of $V^{(i)}_{[p]}\Lambda^{(i)}_{[p]}+\varepsilon {U}^{(i)}_{[p-1]}$.  On the other hand,  the proximal step in Algorithm~\ref{algo} indicates that $U^{(i)}_{[p]}$ is such a polar orthonormal factor.  Hence it is a global maximizer of \eqref{eq:proximal}.  The sufficient decrease of $g$ in this case is an immediate consequence.  The case where $\alpha = 0$ follows immediately from the error bound \cite[Theorem~9]{hu2023linear}.
\item \label{prop:subfficient:item2} $s+1\le  i \le k$: for each $1\le j \le r$ we have
  \begin{equation}\label{eq:lambda-k}
  \lambda^{i+1}_{j,[p]}=\lambda^i_{j,[p]}+\sgn(\lambda^i_{j,[p]})\frac{{|\lambda^{i+1}_{j,[p]}|}}{2}\|\mathbf u^{(i+1)}_{j,[p]}- {\mathbf u}^{(i+1)}_{j,[p-1]}\|^2.
  \end{equation}
\end{enumerate}  
For either \ref{prop:subfficient:item1} or \ref{prop:subfficient:item2},  it is easy to obtain 
\begin{equation*} 
  g( U_{[p]},\lambda_{[p]})-g(U_{[p+1]},\lambda_{[p+1]})\geq
\frac{\min\{\varepsilon,2\kappa^2\}}{2}\| U_{[p]}- U_{[p+1]}\|_F^2.
\end{equation*}
By \eqref{eq:optimal-lambda} and the boundedness of $\{U_{[p]}\}$, we may conclude that 
\[
\|\lambda_{[p]}-\lambda_{[p+1]}\|\leq \zeta_0\|U_{[p]}-U_{[p+1]}\|_F
\]
for some constant $\zeta_0>0$ and this completes the proof.
\end{proof}

In Algorithm~\ref{algo}, if one column of the iteration matrix $U^{(i)}_{[p]}$ is removed in the truncation step, we say that one truncation occurs. It is possible that several truncations may appear in one iteration. We denote by $J_p$ the set of indices of columns which are removed by these truncations in the $p$-th iteration and thus the number of truncations occurred in the $p$-th iteration of Algorithm~\ref{algo} is the cardinality of $J_p$.  There are at most $r$ truncations in Algorithm~\ref{algo} and the increase of the objective function value caused by each truncation is at most $\kappa^2$. Thus, the total increase of the objective function value caused by all truncations is at most $r\kappa^2$.  As a consequence,  we have the following lemma.
\begin{lemma}\label{lem:truncation-obj}
After finitely many steps,  the iterations of Algorithm~\ref{algo} will be stable inside $ \V(t,n_1) \times \cdots \times \V(t,n_s) \times \B(t,n_{s+1}) \times \cdots \times \B(t,n_k) \times \mathbb{R}_{\ast}^t$ for some $t\leq r$.  
\end{lemma}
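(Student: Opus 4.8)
The plan is to dispose first of the manifold membership, which is immediate from the update rules of Algorithm~\ref{algo}, and then to deal with the two substantive points: that the working rank eventually stops changing, and that the coefficient vector eventually has all of its entries nonzero. For $1\le i\le s$ the matrix $U^{(i)}_{[p]}$ is produced as an orthonormal polar factor (cf.\ Proposition~\ref{lem:polar}), hence lies in $\V(t,n_i)$ as soon as the working rank equals $t$; for $s+1\le i\le k$ the columns of $U^{(i)}_{[p]}$ are explicitly rescaled to unit length in \eqref{eq:iteration0}, hence $U^{(i)}_{[p]}\in\B(t,n_i)$. So only the constancy of the rank and the nonvanishing of the coefficients need an argument.

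For the rank, I would argue exactly as in the paragraph preceding the statement: each truncation deletes $\lvert J_p\rvert\ge 1$ columns and replaces the working rank by $r-\lvert J_p\rvert$, so the working rank is a strictly decreasing sequence of positive integers and can change only finitely often; thus there are an index $p_0$ and an integer $t\le r$ such that no truncation occurs for $p\ge p_0$ and the working rank is constantly $t$ from then on. Here $t\ge 1$, because the choice $\kappa\in\bigl(0,\sqrt{(\lVert\mathcal A\rVert^2-2g(U_{[0]},\lambda_{[0]}))/r}\,\bigr)$, the bound ``total increase from truncations $\le r\kappa^2$'', and the sufficient-decrease inequality of Proposition~\ref{prop:subfficient} on the non-truncation iterations together keep $g(U_{[p]},\lambda_{[p]})<\tfrac12\lVert\mathcal A\rVert^2$ for all $p$, so the running approximant $\sum_j\lambda_{j,[p]}\mathbf{u}^{(1)}_{j,[p]}\otimes\cdots\otimes\mathbf{u}^{(k)}_{j,[p]}$ is never zero.

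For the nonvanishing of the coefficients, I would use that for $p\ge p_0$ the truncation test $\bigl\lvert\bigl((U^{(s)}_{[p]})^{\tp}V^{(s)}_{[p]}\bigr)_{jj}\bigr\rvert<\kappa$ fails for every $1\le j\le t$, i.e.\ $\bigl\lvert\bigl((U^{(s)}_{[p]})^{\tp}V^{(s)}_{[p]}\bigr)_{jj}\bigr\rvert\ge\kappa$. Unwinding the definitions of $V^{(s)}_{[p]}$ and $\mathbf{x}^{s}_{j,[p]}$, this diagonal entry equals $\bigl\langle\mathbf{u}^{(s)}_{j,[p]},\,\mathcal A\tau_s(\mathbf{x}^{s}_{j,[p]})\bigr\rangle$, a quantity which along the convergent sequence $\{(U_{[p]},\lambda_{[p]})\}$ of Proposition~\ref{prop:global} tends to $\pm\lambda_{j,\ast}$, because at the limiting KKT point the polar/ALS updates and the coefficient formula \eqref{eq:optimal-lambda} are mutually consistent. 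Passing to the limit gives $\lvert\lambda_{j,\ast}\rvert\ge\kappa>0$ for every $j$, so $\lambda_\ast\in\mathbb{R}_{\ast}^{t}$, and hence $\lambda_{[p]}\in\mathbb{R}_{\ast}^{t}$ for all sufficiently large $p$. Combining the three parts shows that the iterates, and their limit, eventually lie in $\V(t,n_1)\times\cdots\times\V(t,n_s)\times\B(t,n_{s+1})\times\cdots\times\B(t,n_k)\times\mathbb{R}_{\ast}^{t}$.

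I expect the last step to be the main obstacle. The finiteness of the truncations and the manifold memberships are essentially bookkeeping, but one must carefully relate the truncation quantity $\bigl((U^{(s)}_{[p]})^{\tp}V^{(s)}_{[p]}\bigr)_{jj}$ --- read off at the intermediate stage $i=s$ of the $p$-th sweep --- to the coefficient $\lambda_{j,[p]}$ that is actually produced at the end of the sweep, and check that the lower bound $\kappa$ genuinely survives the subsequent ALS updates \eqref{eq:iteration0} for $i>s$ uniformly in $p$; this uses the unit normalization in \eqref{eq:iteration0} together with the boundedness of $\{U_{[p]}\}$ and \eqref{eq:optimal-lambda}.
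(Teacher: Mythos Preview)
Your proposal is correct and follows the same core idea as the paper. The paper's proof is in fact just the paragraph immediately preceding the lemma: since each truncation deletes at least one column and one starts with $r$ columns, there are at most $r$ truncations; hence the working rank stabilizes at some $t\le r$ after finitely many steps. The paper does not spell out the $\mathbb{R}_\ast^t$ claim or the manifold memberships at all, so your treatment is strictly more detailed than the original.

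One point worth streamlining: your argument for $\lambda_{[p]}\in\mathbb{R}_\ast^t$ routes through the limit via Proposition~\ref{prop:global}, which is heavier than needed and leaves you worrying (in your final paragraph) about whether the bound ``survives the subsequent ALS updates.'' It does, and directly: once truncation ceases you have $\lvert((U^{(s)}_{[p]})^{\tp}V^{(s)}_{[p]})_{jj}\rvert=\lvert\lambda^{s}_{j,[p]}\rvert\ge\kappa$, and the recursion \eqref{eq:lambda-k} shows that $\lvert\lambda^{i}_{j,[p]}\rvert$ is nondecreasing in $i$ for $i\ge s$, so $\lvert\lambda_{j,[p]}\rvert=\lvert\lambda^{k}_{j,[p]}\rvert\ge\kappa>0$ for every $p\ge p_0$ and every $j$. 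This avoids invoking global convergence altogether and gives the uniform-in-$p$ bound you were after.
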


Next,  we estimate the differential map of $\psi: \V(r,n_1) \times \cdots \times \V(r,n_s) \times \B(r,n_{s+1}) \times \cdots \times \B(r,n_k) \times \mathbb{R}_{\ast}^r \to \mathbb{R}^{n_1 \times \cdots \times n_k}$ defined by \eqref{eq:psi-tensor}. We observe that 
\begin{equation}\label{eq:differetial-formula}
\dd\psi(U,\lambda)^*(\psi(U,\lambda)-\mathcal A)=\begin{bmatrix}-V^{(1)} {\Gamma}+U^{(1)}{\Gamma^2}\\ \vdots\\ -V^{(k)} {\Gamma}+U^{(k)}{\Gamma^2}\\ \lambda-\operatorname{Diag}_k\big(((U^{(1)})^\tp ,\dots,(U^{(k)})^\tp )\cdot\mathcal A\big) \end{bmatrix},
\end{equation}
where $\Gamma=\diag(\lambda)$ is the diagonal matrix determined by $\lambda$, and $\operatorname{Diag}_k(\mathcal{T})$ is the vector consisting of diagonal elements of a tensor $\mathcal{T}$.  For simplicity,  we define 
\[
\mathrm{W}_r \coloneqq \V(r,n_1) \times \cdots \times \V(r,n_s) \times \OB(r,n_{s+1}) \times \cdots \times \OB(r,n_k) \times \mathbb{R}_{\ast}^r.
\]  
\begin{lemma}\label{lem:subdiff}
If the $(p+1)$-th iteration is not a truncation iteration, then there exists a $ (W_{[p+1]},\mathbf{0})\in\mathsf{N}_{\mathrm{W}_r}(U_{[p+1]},\lambda_{[p+1]})$ such that
\begin{equation}\label{eq:subdiff}
\|\dd\psi(U_{[p+1]},\lambda_{[p+1]})^*(\psi(U_{[p+1]},\lambda_{[p+1]})-\mathcal A)- (W_{[p+1]},\mathbf{0})\|_F\leq {2\sqrt{k}}(2r{\sqrt{k}}{\|\mathcal A\|^2} +\varepsilon)\| U_{[p+1]}- U_{[p]}\|_F.
\end{equation}
\end{lemma}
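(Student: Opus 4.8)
The plan is to evaluate the gradient formula \eqref{eq:differetial-formula} at the iterate $(U_{[p+1]},\lambda_{[p+1]})$ and to peel off from each of its blocks a normal vector read off from the $(p+1)$-th iteration of Algorithm~\ref{algo}. By Lemma~\ref{lem:truncation-obj} I may assume $(U_{[p+1]},\lambda_{[p+1]})\in\mathrm{W}_r$; since $\OB(r,n_i)$ is open in $\B(r,n_i)$ and $\mathbb{R}_\ast$ is open in $\mathbb{R}$, $\mathrm{W}_r$ is an open submanifold of $\V(r,n_1)\times\cdots\times\V(r,n_s)\times\B(r,n_{s+1})\times\cdots\times\B(r,n_k)\times\mathbb{R}^r$, whose normal space at $(U_{[p+1]},\lambda_{[p+1]})$ is
\[
\mathsf{N}_{\mathrm{W}_r}(U_{[p+1]},\lambda_{[p+1]})=\bigl\{\,(U^{(1)}_{[p+1]}X_1,\dots,U^{(s)}_{[p+1]}X_s,\,U^{(s+1)}_{[p+1]}D_{s+1},\dots,U^{(k)}_{[p+1]}D_k,\,\mathbf 0)\,\bigr\},
\]
with $X_1,\dots,X_s\in\SS^r$ (using \eqref{eq:normal-stf}) and $D_{s+1},\dots,D_k$ diagonal. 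The $\lambda$-block of \eqref{eq:differetial-formula} is $\lambda_{[p+1]}-\Diag_k\bigl((U^{(1)}_{[p+1]})^\tp,\dots,(U^{(k)}_{[p+1]})^\tp)\cdot\mathcal A\bigr)=\mathbf 0$, because $\lambda_{[p+1]}$ is defined by \eqref{eq:optimal-lambda}; so the $\lambda$-component of $(W_{[p+1]},\mathbf 0)$ is already $\mathbf 0$, and it remains to build the factor blocks $W_{[p+1]}=(W^{(1)}_{[p+1]},\dots,W^{(k)}_{[p+1]})$.

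The key input is a pair of Lipschitz estimates separating the analytic data in \eqref{eq:differetial-formula} from the data actually computed by Algorithm~\ref{algo}. Let $\widehat V^{(i)}_{[p+1]}$ be the matrix whose $j$-th column is $\mathcal A\tau_i(\mathbf u^{(1)}_{j,[p+1]},\dots,\mathbf u^{(k)}_{j,[p+1]})$ — this is the matrix $V^{(i)}$ occurring in \eqref{eq:differetial-formula} at $(U_{[p+1]},\lambda_{[p+1]})$ — and set $\Gamma_{[p+1]}:=\diag(\lambda_{[p+1]})$. These differ from the algorithm's $V^{(i)}_{[p+1]}$ (columns $\mathcal A\tau_i(\mathbf x^i_{j,[p+1]})$) and $\Lambda^{(i)}_{[p+1]}$ only in the tensor slots $i,i+1,\dots,k$, in which $\mathbf x^i_{j,[p+1]}$ still carries the vectors of iteration $p$. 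Since $\mathcal A\tau$ is multilinear in its arguments, each of which is a unit vector, so that $|\mathcal A\tau(\cdot)|\le\|\mathcal A\|$, a telescoping estimate shows $\|\widehat V^{(i)}_{[p+1]}-V^{(i)}_{[p+1]}\|_F$ and $\|\Gamma_{[p+1]}-\Lambda^{(i)}_{[p+1]}\|$ are each $O(\|\mathcal A\|\sqrt k\,\|U_{[p+1]}-U_{[p]}\|_F)$, while also $\|\Gamma_{[p+1]}\|,\|\Lambda^{(i)}_{[p+1]}\|\le\|\mathcal A\|$, $\|V^{(i)}_{[p+1]}\|_F\le\sqrt r\,\|\mathcal A\|$ and $\|U^{(i)}_{[p+1]}\|_F=\sqrt r$.

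For $s+1\le i\le k$ the power step \eqref{eq:iteration0} makes every $\mathbf v^{(i)}_{j,[p+1]}$ a scalar multiple of $\mathbf u^{(i)}_{j,[p+1]}$, so $V^{(i)}_{[p+1]}=U^{(i)}_{[p+1]}D_i$ for a diagonal $D_i$ with $\|D_i\|\le\|\mathcal A\|$; substituting $\widehat V^{(i)}_{[p+1]}=V^{(i)}_{[p+1]}+(\widehat V^{(i)}_{[p+1]}-V^{(i)}_{[p+1]})$ into the $i$-th block $-\widehat V^{(i)}_{[p+1]}\Gamma_{[p+1]}+U^{(i)}_{[p+1]}\Gamma^2_{[p+1]}$ rewrites it as $U^{(i)}_{[p+1]}(\Gamma^2_{[p+1]}-D_i\Gamma_{[p+1]})-(\widehat V^{(i)}_{[p+1]}-V^{(i)}_{[p+1]})\Gamma_{[p+1]}$, and I take $W^{(i)}_{[p+1]}:=U^{(i)}_{[p+1]}(\Gamma^2_{[p+1]}-D_i\Gamma_{[p+1]})$, a diagonal scaling of $U^{(i)}_{[p+1]}$ hence normal, with remainder $-(\widehat V^{(i)}_{[p+1]}-V^{(i)}_{[p+1]})\Gamma_{[p+1]}$. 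For $1\le i\le s$ the polar identity \eqref{eq:algorithm1-polar} is $U^{(i)}_{[p+1]}S^{(i)}_{[p+1]}=V^{(i)}_{[p+1]}\Lambda^{(i)}_{[p+1]}+\alpha U^{(i)}_{[p]}$ with $S^{(i)}_{[p+1]}\in\SS^r$ and $\alpha\in\{0,\varepsilon\}$, so
\[
-V^{(i)}_{[p+1]}\Lambda^{(i)}_{[p+1]}+U^{(i)}_{[p+1]}(\Lambda^{(i)}_{[p+1]})^2=U^{(i)}_{[p+1]}\bigl((\Lambda^{(i)}_{[p+1]})^2-S^{(i)}_{[p+1]}+\alpha I_r\bigr)+\alpha\bigl(U^{(i)}_{[p]}-U^{(i)}_{[p+1]}\bigr),
\]
and I take $W^{(i)}_{[p+1]}:=U^{(i)}_{[p+1]}\bigl((\Lambda^{(i)}_{[p+1]})^2-S^{(i)}_{[p+1]}+\alpha I_r\bigr)$, which is normal since the bracket is symmetric; the proximal term is split as $\alpha U^{(i)}_{[p]}=U^{(i)}_{[p+1]}(\alpha I_r)+\alpha(U^{(i)}_{[p]}-U^{(i)}_{[p+1]})$ exactly so that only $\alpha(U^{(i)}_{[p]}-U^{(i)}_{[p+1]})$ survives, and adding the discrepancy between $-\widehat V^{(i)}_{[p+1]}\Gamma_{[p+1]}+U^{(i)}_{[p+1]}\Gamma^2_{[p+1]}$ and the left side above (a sum of the terms $(\widehat V^{(i)}_{[p+1]}-V^{(i)}_{[p+1]})\Gamma_{[p+1]}$, $V^{(i)}_{[p+1]}(\Gamma_{[p+1]}-\Lambda^{(i)}_{[p+1]})$ and $U^{(i)}_{[p+1]}(\Gamma^2_{[p+1]}-(\Lambda^{(i)}_{[p+1]})^2)$) gives the remainder for $i\le s$. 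In each case the estimates of the second paragraph bound the remainder by a constant times $\|U_{[p+1]}-U_{[p]}\|_F$; summing the Frobenius norms over $i=1,\dots,k$ (the $\lambda$-remainder being $\mathbf 0$) and absorbing the constants produces \eqref{eq:subdiff}. I expect this last accounting to be the only real difficulty: one has to keep strict track of the mismatch between the ALS/polar quantities $V^{(i)}_{[p+1]},\Lambda^{(i)}_{[p+1]},S^{(i)}_{[p+1]}$ — which blend iterations $p$ and $p+1$ — and the analytic quantities $\widehat V^{(i)}_{[p+1]},\Gamma_{[p+1]}$, and must split the proximal correction as above so that it enters only through $U^{(i)}_{[p]}-U^{(i)}_{[p+1]}$.
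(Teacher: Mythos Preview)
Your proposal is correct and follows essentially the same route as the paper: identify the block structure of $\mathsf N_{\mathrm W_r}$, observe that the $\lambda$-block of \eqref{eq:differetial-formula} vanishes by \eqref{eq:optimal-lambda}, use the polar identity \eqref{eq:algorithm1-polar} for $i\le s$ and the ALS normalization \eqref{eq:iteration0} for $i>s$ to read off a normal block $W^{(i)}_{[p+1]}$, and bound the remainder by telescoping Lipschitz estimates on $\widehat V^{(i)}_{[p+1]}-V^{(i)}_{[p+1]}$ and $\Gamma_{[p+1]}-\Lambda^{(i)}_{[p+1]}$.

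The only cosmetic difference is the precise choice of $W^{(i)}_{[p+1]}$ for $i\le s$. The paper takes $W^{(i)}_{[p+1]}=U^{(i)}_{[p+1]}\Gamma_{[p+1]}^{2}-V^{(i)}_{[p+1]}\Lambda^{(i)}_{[p+1]}-\alpha\bigl(U^{(i)}_{[p]}-U^{(i)}_{[p+1]}\bigr)$, which is already $U^{(i)}_{[p+1]}$ times a symmetric matrix by \eqref{eq:algorithm1-polar}; this cancels the $U^{(i)}_{[p+1]}\Gamma_{[p+1]}^{2}$ term in \eqref{eq:differetial-formula} exactly and leaves only the three remainder pieces $(\widehat V^{(i)}_{[p+1]}-V^{(i)}_{[p+1]})\Gamma_{[p+1]}$, $V^{(i)}_{[p+1]}(\Gamma_{[p+1]}-\Lambda^{(i)}_{[p+1]})$, and $\alpha(U^{(i)}_{[p]}-U^{(i)}_{[p+1]})$. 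Your choice $W^{(i)}_{[p+1]}=U^{(i)}_{[p+1]}\bigl((\Lambda^{(i)}_{[p+1]})^{2}-S^{(i)}_{[p+1]}+\alpha I_r\bigr)$ generates an additional remainder $U^{(i)}_{[p+1]}\bigl(\Gamma_{[p+1]}^{2}-(\Lambda^{(i)}_{[p+1]})^{2}\bigr)$, but this term is itself normal (diagonal bracket), so you could simply absorb it into $W^{(i)}_{[p+1]}$ and recover the paper's choice. Either way the final bound goes through with room to spare for the stated constant.
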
 
\begin{proof}
  Since $\mathrm{W}_r$ is a product manifold, it follows that  
  \begin{multline}\label{eq:sub-block}
    \mathsf{N}_{\mathrm{W}_r}(U,\lambda)=\mathsf{N}_{\V(r,n_1)}(U^{(1)})\times\dots\times \mathsf{N}_{\V(r,n_s)}(U^{(s)})\\ 
    \times \mathsf{N}_{\BB(r,n_{s+1})}(U^{(s+1)})\times\dots\times \mathsf{N}_{\BB(r,n_k)}(U^{(k)})\times\{\mathbf 0\}.
  \end{multline}
  Following notations in Algorithm~\ref{algo}, for each $1 \le j \le r$, we set
  \begin{align}
  \mathbf x_{j} &\coloneqq (\mathbf u^{(1)}_{j,[p+1]},\dots,\mathbf u^{(k)}_{j,[p+1]}), \\
  \mathbf v^{(i)}_j &\coloneqq \mathcal A\tau_i(\mathbf x_{j}), \\ \lambda_j &\coloneqq \mathcal A\tau(\mathbf x_{j}), \\
  V^{(i)} &\coloneqq \begin{bmatrix}\mathbf v^{(i)}_1&\dots&\mathbf v^{(i)}_r\end{bmatrix}, \label{eq:matrix-vi} \\
  \Lambda &\coloneqq {\operatorname{diag}}(\lambda_1,\dots,\lambda_r),\label{eq:matrix-lambda}
  \end{align}
  where $\mathbf u^{(i)}_{j,[p+1]}$ is the {$j$-th} column of the matrix $U^{(i)}_{[p+1]}, 1 \le i \le k, 1 \le j \le r$. With these notations, from \eqref{eq:differetial-formula}, we have 
  \begin{equation}\label{eq:diff-iteration}
    \dd\psi(U_{[p+1]},\lambda_{[p+1]})^*(\psi(U_{[p+1]},\lambda_{[p+1]})-\mathcal A)=\begin{bmatrix}-V^{(1)} {\Lambda}+U^{(1)}_{[p+1]}{\Lambda^2}\\ \vdots\\ -V^{(k)} {\Lambda}+U^{(k)}_{[p+1]}{\Lambda^2}\\ \mathbf 0 \end{bmatrix}.
  \end{equation}

  In the following, we divide the proof into two parts.
  \begin{enumerate}[label=(\roman*)]
  \item\label{proof:sub-1} \if The situation for $i=1,\dots,s$.\fi For $i=1,\dots,s$, we have by \eqref{eq:algorithm1-polar} and \eqref{eq:proximal} that
  \begin{equation}\label{eqn:proof of lemma 4.5-1}
  V^{(i)}_{[p+1]} \Lambda^{(i)}_{[p+1]}+\alpha_{i,[p+1]} U^{(i)}_{[p]}=U^{(i)}_{[p+1]}S^{(i)}_{[p+1]},
  \end{equation}
  where $\alpha_{i,[p+1]} = \varepsilon$ or $0$ depending on {whether or not} there is a proximal correction. {According to {Lemma~\ref{lem:critical point},} \eqref{eq:normal-stf} and \eqref{eqn:proof of lemma 4.5-1}, we have
  \[
  -U^{(i)}_{[p+1]} \in \mathsf{N}_{\V(r,n_i)}(U^{(i)}_{[p+1]}), \quad
  V^{(i)}_{[p+1]}\Lambda^{(i)}_{[p+1]}+\alpha_{i,[p+1]} U^{(i)}_{[p]} \in \mathsf{N}_{\V(r,n_i)}(U^{(i)}_{[p+1]}),
  \]
  which implies that $V^{(i)}_{[p+1]}\Lambda^{(i)}_{[p+1]}+\alpha_{i,[p+1]}\big(U^{(i)}_{[p]}-U^{(i)}_{[p+1]}\big)\in \mathsf{N}_{\V(r,n_i)}(U^{(i)}_{[p+1]})$. If we take
  \begin{equation}\label{eq:w}
  W^{(i)}_{[p+1]}:=U^{(i)}_{[p+1]}\Lambda^2 -V^{(i)}_{[p+1]}\Lambda^{(i)}_{[p+1]}-\alpha_{i,[p+1]}\big(U^{(i)}_{[p]}-U^{(i)}_{[p+1]}\big),
  \end{equation}
  then we have}
  \[
  W^{(i)}_{[p+1]}\in \mathsf{N}_{\V(r,n_i)}(U^{(i)}_{[p+1]}).
  \]
  On the other hand,
  \begin{align}
  &\quad\ \|-V^{(i)} {\Lambda}+U^{(i)}_{[p+1]}{\Lambda^2}-W^{(i)}_{[p+1]}\|_F\nonumber\\
  &=\|V^{(i)}\Lambda -V^{(i)}_{[p+1]}\Lambda^{(i)}_{[p+1]}-\alpha_{i,[p+1]}\big(U^{(i)}_{[p]}-U^{(i)}_{[p+1]}\big)\|_F\nonumber\\
  &\leq\|V^{(i)}\Lambda -V^{(i)}_{[p+1]}\Lambda \|_F+\|V^{(i)}_{[p+1]}\Lambda -V^{(i)}_{[p+1]}\Lambda^{(i)}_{[p+1]}\|_F+\alpha_{i,[p+1]}\|U^{(i)}_{[p]}-U^{(i)}_{[p+1]}\|_F\nonumber\\
  &\leq\|V^{(i)}-V^{(i)}_{[p+1]}\|_F\|\Lambda \|_F+\|V^{(i)}_{[p+1]}\|_F\|\Lambda -\Lambda^{(i)}_{[p+1]}\|_F+\alpha_{i,[p+1]}\|U^{(i)}_{[p]}-U^{(i)}_{[p+1]}\|_F\\
  &\leq{\|\mathcal A\|}\|\Lambda \|_F\big(\sum_{j=1}^r\|\tau_i(\mathbf x_j)-\tau_i(\mathbf x^i_{j,[p+1]})\|\big)\nonumber\\
  &\quad +\|V^{(i)}_{[p+1]}\|_F\|\mathcal A\|\big(\sum_{j=1}^r\|\tau(\mathbf x_j)-\tau(\mathbf x^i_{j,[p+1]})\|\big)+\alpha_{i,[p+1]}\|U^{(i)}_{[p]}-U^{(i)}_{[p+1]}\|_F\nonumber\\
  &\leq \sqrt{r}\|\mathcal A\|^2\big(\sum_{j=1}^r\sum_{t=i+1}^{k}\|\mathbf u^{(t)}_{j,[p+1]}-\mathbf u^{(t)}_{j,[p]}\| \big)\nonumber\\
  &\quad+\sqrt{r}\|\mathcal A\|^2\big(\sum_{j=1}^r\sum_{t=i}^{k}\|\mathbf u^{(t)}_{j,[p+1]}-\mathbf u^{(t)}_{j,[p]}\|\big)+\alpha_{i,[p+1]}\|U^{(i)}_{[p]}-U^{(i)}_{[p+1]}\|_F\nonumber\\
  &{\leq 2\sqrt{r}\|\A\|^2\sum_{t=1}^k \left( \sum_{j=1}^r\big( \|\mathbf u^{(t)}_{j,[p+1]}-\mathbf u^{(t)}_{j,[p]}\|\big) \right)+\alpha_{i,[p+1]}\|U^{(i)}_{[p]}-U^{(i)}_{[p+1]}\|_F}\nonumber\\
  &\leq {2r{\sqrt{k}}\|\mathcal A\|^2 \| U_{[p+1]}- U_{[p]}\|_F +\varepsilon \|U^{(i)}_{[p]}-U^{(i)}_{[p+1]}\|_F},\label{eq:subdif-est}
  \end{align}
  where the third inequality follows from
  \[
  V^{(i)}-V^{(i)}_{[p+1]}=\begin{bmatrix}\mathcal A(\tau_i(\mathbf x_1)-\tau_i(\mathbf x^i_{1,[p+1]}))&\dots&\mathcal A(\tau_i(\mathbf x_r)-\tau_i(\mathbf x^i_{r,[p+1]}))\end{bmatrix},
  \]
  and a similar formula for $(\Lambda -\Lambda^{(i)}_{[p+1]})$, the fourth inequality is derived from
  \[
  |\mathcal A\tau(\mathbf x)|\leq \|\mathcal A\|
  \]
  for any vector $\mathbf x:=(\mathbf x_1,\dots,\mathbf x_k)$ with $\|\mathbf x_i\|=1$ for all $i=1,\dots,k$ and the last one follows from $\alpha_{i,[p+1]} \leq\epsilon$ and $\sum_{j=1}^r\|\mathbf u^{(t)}_{j,[p+1]}-\mathbf u^{(t)}_{j,[p]}\| \le \sqrt{r}\| U^{(t)}_{[p+1]}- U^{(t)}_{[p]}\|_F$ for all $t=1,\dots, k$. 
  \item\label{proof:sub-2} For $i=s,\dots,k$, recall that 
  \begin{equation}\label{eq:lambda-als}
  {\Lambda_{[p+1]}^{(i+1)} = \diag(\lambda^i_{1,[p+1]},\dots,\lambda^i_{r,[p+1]})}.
  \end{equation}
  It follows from the alternating least square step in Algorithm~\ref{algo}, \eqref{eq:iteration0} and \eqref{eq:lambda-k} that
  \[
  \mathbf{v}^{(i)}_{j,[p+1]}\lambda^i_{j,[p+1]}=\mathbf u^{(i)}_{j,[p+1]}(\lambda^i_{j,[p+1]})^2, \quad j=1,\dots,r,
  \]
  which can be written more compactly as 
  \begin{equation}\label{eq:opt-k}
  V^{(i)}_{[p+1]}\Lambda_{[p+1]}^{(i+1)} =U^{(i)}_{[p+1]}\big(\Lambda_{[p+1]}^{(i+1)} \big)^2.
  \end{equation}
  If we take
  \begin{equation}\label{eq:w-k}
  W^{(i)}_{[p+1]}:=U^{(i)}_{[p+1]}\Lambda^2 -V^{(i)}_{[p+1]}\Lambda_{[p+1]}^{(i+1)},
  \end{equation}
  then we have
  \[
  W^{(i)}_{[p+1]}\in \mathsf{N}_{\B(r,n_i)}(U^{(i)}_{[p+1]}).
  \]
  By a similar argument as in \ref{proof:sub-1}, we have
  \begin{equation}\label{eq:subdif-est-als}
  \|-V^{(i)} {\Lambda}+U^{(i)}_{[p+1]}{\Lambda^2}-W^{(i)}_{[p+1]}\|_F=\|V^{(i)}\Lambda -V^{(i)}_{[p+1]}\Lambda_{[p+1]}^{(i+1)}\|_F\leq {2r\sqrt{k}}\|\mathcal A\|^2\| U_{[p+1]}- U_{[p]}\|_F.
  \end{equation}
  \end{enumerate}
  Noticing that $(W_{[p+1]},\mathbf{0})\in \mathsf{N}_{\mathrm{W}_r}(U_{[p+1]},\lambda_{[p+1]})$, we obtain \eqref{eq:subdiff} from \ref{proof:sub-1} and \ref{proof:sub-2}.
  \end{proof}

We are in the position to establish the R-linear convergence of Algorithm~\ref{algo} for a generic tensor $\mathcal{A} \in \mathbb{R}^{n_1}\otimes \cdots \otimes \mathbb{R}^{n_k}$. 
The following characterization of a KKT point of \eqref{eq:LRPOTA} can be obtained by a direct computation. 
\begin{proposition}\label{prop:critical-equivalence}
Suppose that $(U_\ast,\lambda_\ast)\in \V(r,n_1) \times \cdots \times \V(r,n_s) \times \OB(r,n_{s+1}) \times \cdots \times \OB(r,n_k) \times \mathbb{R}_{\ast}^r$.  Then $(U_\ast,\lambda_\ast)$ is a critical point of $g$ if and only if it is a KKT point of  \eqref{eq:LRPOTA}.   
\end{proposition}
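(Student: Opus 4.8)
The plan is to compare the KKT conditions of the constrained problem \eqref{eq:LRPOTA} with the critical-point condition for $g$ restricted to the smooth manifold $\mathrm{W}_r = \V(r,n_1) \times \cdots \times \V(r,n_s) \times \OB(r,n_{s+1}) \times \cdots \times \OB(r,n_k) \times \mathbb{R}_{\ast}^r$, and show that at a point $(U_\ast,\lambda_\ast)$ with all $\lambda$'s nonzero and all the oblique factors of full rank, the two conditions coincide. First I would write down the KKT system of \eqref{eq:LRPOTA} explicitly: the Lagrange multipliers associated with the constraints $(U^{(i)})^\tp U^{(i)} = I_r$ for $1 \le i \le s$ form symmetric matrices $S^{(i)} \in \SS^r$, while the multipliers for the normalization constraints $\|\mathbf u^{(i)}_j\|^2 = 1$ for $s+1 \le i \le k$ are scalars $\mu^{(i)}_j$, i.e.\ the corresponding block of the multiplier is a diagonal matrix $D^{(i)}$. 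The stationarity equations then say that the $i$-th block of $\dd\psi(U_\ast,\lambda_\ast)^*(\psi(U_\ast,\lambda_\ast)-\mathcal{A})$ equals $U^{(i)}_\ast S^{(i)}$ (for $i \le s$) or $U^{(i)}_\ast D^{(i)}$ (for $i > s$), together with the $\lambda$-block vanishing, which is exactly \eqref{eq:optimal-lambda}.

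Next I would invoke the description of the normal space. Using \eqref{eq:sub-block}, \eqref{eq:normal-stf}, and the analogous fact that $\mathsf{N}_{\B(r,n)}(A) = \{A\Delta : \Delta \in \mathbb{R}^{r\times r}\text{ diagonal}\}$ (which holds at a point of $\B(r,n)$, and in particular on the open subset $\OB(r,n)$), the normal space $\mathsf{N}_{\mathrm{W}_r}(U_\ast,\lambda_\ast)$ consists precisely of tuples whose $i$-th block is $U^{(i)}_\ast X^{(i)}$ with $X^{(i)} \in \SS^r$ for $i \le s$, $X^{(i)}$ diagonal for $s+1 \le i \le k$, and whose $\lambda$-block is $\mathbf{0}$. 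Comparing with the formula \eqref{eq:differetial-formula} for $\dd\psi(U_\ast,\lambda_\ast)^*(\psi(U_\ast,\lambda_\ast)-\mathcal{A})$, I see that the criticality condition from Lemma~\ref{lem:critical point}, namely that this gradient lies in $\mathsf{N}_{\mathrm{W}_r}(U_\ast,\lambda_\ast)$, translates block-by-block into: $-V^{(i)}\Gamma + U^{(i)}_\ast \Gamma^2 = U^{(i)}_\ast X^{(i)}$ for a symmetric (resp.\ diagonal) $X^{(i)}$, plus $\lambda_\ast = \operatorname{Diag}_k(\cdots)$. This is literally the KKT system identified in the first paragraph, with $S^{(i)} = X^{(i)}$, so the two notions agree.

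The one point that requires care — and I expect it to be the main obstacle — is the direction of the equivalence on the oblique factors, where one must check that the relevant block of the gradient automatically has the form $U^{(i)}_\ast(\text{diagonal})$ rather than merely $U^{(i)}_\ast(\text{symmetric})$. This is where the hypotheses $(U^{(i)}_\ast)^\tp U^{(i)}_\ast$ full rank and $\lambda_\ast \in \mathbb{R}_\ast^r$ enter: the $i$-th block of \eqref{eq:differetial-formula} is $-V^{(i)}\Gamma + U^{(i)}_\ast\Gamma^2$, and since $\B(r,n_i)$ is a product of spheres its normal space at $U^{(i)}_\ast$ is genuinely the diagonal family $\{U^{(i)}_\ast\Delta : \Delta\text{ diagonal}\}$ — there is no symmetric-versus-diagonal gap for $\B$, so the criticality condition for $g$ on $\mathrm{W}_r$ forces $U^{(i)}_\ast{}^\tp V^{(i)}\Gamma - \Gamma^2$ to be diagonal, which is exactly column-wise orthogonality of $\mathbf u^{(i)}_{j,\ast}$ to the residual contracted in the other modes; multiplying by $\Gamma^{-1}$ (legitimate because $\lambda_\ast\in\mathbb{R}_\ast^r$) recovers the scalar stationarity of \eqref{eq:LRPOTA}. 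Conversely, a KKT point of \eqref{eq:LRPOTA} plainly satisfies the normal-space membership, so it is critical for $g$ on $\mathrm{W}_r$. I would close by remarking that, because $\mathrm{W}_r$ is an open submanifold of $\B(r,n_1)\times\cdots\times\B(r,n_k)\times\mathbb{R}^r$ near such a point, being a critical point of $g$ on $\mathrm{W}_r$ is the same as being a critical point of $g$ in the sense of \eqref{eq:critical point}, completing the proof.
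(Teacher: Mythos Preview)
Your proposal is correct and is essentially the direct computation the paper alludes to; the paper itself does not give a proof beyond the sentence ``can be obtained by a direct computation.'' Your block-by-block comparison of the KKT multiplier structure with the normal-space description \eqref{eq:sub-block}, \eqref{eq:normal-stf} (and the diagonal analogue for $\B(r,n)$), together with Lemma~\ref{lem:critical point} and formula \eqref{eq:differetial-formula}, is exactly the intended verification.
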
  

The following fact is also straightforward to verify.
\begin{lemma}[Local Diffeomorphism]\cite{YH-22}\label{lem:localdiff}
The set $\mathrm{W}_r \coloneqq \V(r,n_1) \times \cdots \times \V(r,n_s) \times \OB(r,n_{s+1}) \times \cdots \times \OB(r,n_k) \times \mathbb{R}_{\ast}^r$ is a smooth manifold and is locally diffeomorphic to the manifold $ \psi(\mathrm{W}_r)$,  where $\psi$ is the map defined in \eqref{eq:psi-tensor}.
\end{lemma}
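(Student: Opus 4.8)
The statement has two parts, and I would dispose of the first at once: $\mathrm{W}_r$ is a smooth manifold because each $\V(r,n_i)$ is a Stiefel manifold, each $\OB(r,n_i)$ is the open submanifold of $\B(r,n_i)=(\mathbb{S}^{n_i-1})^r$ cut out by $\det\!\big((U^{(i)})^\tp U^{(i)}\big)\neq 0$, and $\mathbb{R}_\ast^r$ is open in $\mathbb{R}^r$, so a finite product of smooth manifolds is smooth. The real content is that $\psi$ restricts to a local diffeomorphism of $\mathrm{W}_r$ onto $\psi(\mathrm{W}_r)$, and the plan is to prove this by showing $\psi|_{\mathrm{W}_r}$ is an immersion and then invoking the immersion theorem together with the essential uniqueness of the rank decomposition.

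First I would compute the differential. For a tangent vector $(\dot U,\dot\lambda)$ at $(U,\lambda)\in\mathrm{W}_r$,
\[
\dd_{(U,\lambda)}\psi(\dot U,\dot\lambda)=\sum_{j=1}^r\dot\lambda_j\,\mathbf u^{(1)}_j\otimes\cdots\otimes\mathbf u^{(k)}_j+\sum_{j=1}^r\lambda_j\sum_{i=1}^k\mathbf u^{(1)}_j\otimes\cdots\otimes\dot{\mathbf u}^{(i)}_j\otimes\cdots\otimes\mathbf u^{(k)}_j ,
\]
where the tangency conditions (the unit-norm constraint on the oblique factors and the skew-symmetry constraint on the Stiefel factors) both force $\langle\mathbf u^{(i)}_j,\dot{\mathbf u}^{(i)}_j\rangle=0$ for all $i,j$. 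Since every factor matrix $U^{(i)}$ has linearly independent columns (orthonormal if $i\le s$, linearly independent by definition of $\OB$ if $i>s$), for each mode $i$ there are dual vectors $\mathbf w^{(i)}_l\in\mathbb{R}^{n_i}$, namely the columns of $U^{(i)}\big((U^{(i)})^\tp U^{(i)}\big)^{-1}$, with $\langle\mathbf w^{(i)}_l,\mathbf u^{(i)}_j\rangle=\delta_{lj}$. Fixing a mode $m$ and contracting $\dd_{(U,\lambda)}\psi(\dot U,\dot\lambda)=0$ against $\bigotimes_{i\ne m}\mathbf w^{(i)}_l$ in all modes except $m$, and using $k\ge 3$ so that in each resulting sum the surviving products of inner products pin the summation index to $j=l$, one gets
\[
\dot\lambda_l\,\mathbf u^{(m)}_l+\lambda_l\,\dot{\mathbf u}^{(m)}_l+\lambda_l\Big(\textstyle\sum_{i\ne m}\langle\mathbf w^{(i)}_l,\dot{\mathbf u}^{(i)}_l\rangle\Big)\mathbf u^{(m)}_l=0 .
\]
Projecting onto the orthogonal complement of $\mathbf u^{(m)}_l$ and using $\langle\mathbf u^{(m)}_l,\dot{\mathbf u}^{(m)}_l\rangle=0$ and $\lambda_l\neq 0$ gives $\dot{\mathbf u}^{(m)}_l=0$; letting $m$ range over $1,\dots,k$ yields $\dot U=0$, and then the displayed identity forces $\dot\lambda_l=0$ for all $l$. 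Hence $\ker\dd_{(U,\lambda)}\psi=0$ at every point, so $\psi|_{\mathrm{W}_r}$ is an immersion.

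Given the immersion, the immersion theorem shows $\psi$ is locally an embedding: every point of $\mathrm{W}_r$ has a neighbourhood on which $\psi$ is a diffeomorphism onto an embedded submanifold of $\mathbb{R}^{n_1\times\cdots\times n_k}$. To assemble these local pieces into a smooth structure on $\psi(\mathrm{W}_r)$ for which $\psi$ is a local diffeomorphism, I would use that the fibres of $\psi$ over $\mathrm{W}_r$ are exactly the orbits of the finite group $G$ acting by simultaneous permutations of the indices $j$ and by the sign changes of the unit vectors that leave $\psi$ invariant — this is the essential uniqueness of rank decompositions with linearly independent factors for $k\ge 3$ (cf. \cite{Kruskal77}) — and that $G$ acts freely on $\mathrm{W}_r$, since two equal columns would contradict linear independence. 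Then $\mathrm{W}_r\to\mathrm{W}_r/G$ is a covering, $\mathrm{W}_r/G$ is a smooth (Hausdorff, second countable) manifold, $\psi$ descends to a bijective immersion $\mathrm{W}_r/G\to\psi(\mathrm{W}_r)$, and $\psi$ itself is a local diffeomorphism onto $\psi(\mathrm{W}_r)$ with this structure. I expect the injectivity of $\dd_{(U,\lambda)}\psi$ — the contraction bookkeeping and, in particular, the role of the hypothesis $k\ge 3$, without which the argument fails already for matrices — to be the main obstacle; reducing the global statement to it via the immersion theorem and identifiability is routine.
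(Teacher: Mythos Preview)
The paper does not actually prove this lemma; it is stated with a citation to \cite{YH-22} and prefaced by ``The following fact is also straightforward to verify.''  So there is no in-paper argument to compare against, and your proposal supplies what the authors defer to the reference.

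Your argument is sound for $k\ge 3$ (the regime the paper actually uses, as the hypotheses of Theorem~\ref{thm:generic} force $k\ge 3$).  The contraction computation is correct: with dual vectors $\mathbf w^{(i)}_l$ available in every mode because each factor matrix has full column rank, and with at least one Kronecker-delta factor surviving in the cross terms precisely when $k\ge 3$, injectivity of $\dd\psi$ follows cleanly.  The passage from ``immersion with finite, free, fibre-transitive symmetry group'' to ``local diffeomorphism onto the quotient'' is the right way to equip $\psi(\mathrm{W}_r)$ with its manifold structure; just be explicit that you are \emph{defining} the smooth structure on $\psi(\mathrm{W}_r)$ via the bijection with $\mathrm{W}_r/G$, not asserting it is an embedded submanifold of the tensor space.

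Two small points worth tightening: (i) the invocation of Kruskal for fibre-transitivity of $G$ needs $kr\ge 2r+k-1$, which for $k=3$ means $r\ge 2$; the case $r=1$ is elementary and should be mentioned separately.  (ii) For $k=2$, $s=1$ your immersion argument genuinely fails (a dimension count shows $\dim\mathrm{W}_r>\dim\psi(\mathrm{W}_r)$), so the restriction $k\ge 3$ is not merely technical --- you note this, but it is worth flagging that the lemma as literally stated needs this hypothesis, which the paper leaves implicit.
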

 
\begin{theorem}[Generic Linear Convergence Rate]\label{thm:generic}
  Let $s\geq 1$ and let $\{ U_{[p]}\}$ be a sequence generated by the iAPD-ALS algorithm for a generic tensor $\mathcal A\in\mathbb R^{n_1}\otimes\dots\otimes\mathbb R^{n_k}$. If $s = 1$, assume in addition that 
  \[
  \sum_{i'=2}^k n_{i'} \geq n_1 +k-1,\quad \sum_{i'=2}^k n_{i'} \ge n_{i''} + k,\quad 2 \le i'' \le k.
  \]
  The sequence $\{ U_{[p]}\}$ converges $R$-linearly to a KKT point of \eqref{eq:LRPOTA}.
  \end{theorem}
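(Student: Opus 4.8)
The plan is to obtain Theorem~\ref{thm:generic} as a special case of the general criterion Theorem~\ref{thm:linear}. I apply the latter with $\mathrm{M} \coloneqq \V(r,n_1) \times \cdots \times \V(r,n_s) \times \B(r,n_{s+1}) \times \cdots \times \B(r,n_k) \times \mathbb{R}_{\ast}^{r}$, the polynomial map $\psi$ of \eqref{eq:psi-tensor}, the subvariety $\mathrm{M}_1$ of rank-deficient tuples introduced in Section~\ref{sec:lrpota}, and data vector $\mathbf b = \mathcal A$; note that $\mathrm{M}\setminus\mathrm{M}_1 = \mathrm{W}_r$. Before invoking it one must pass to the stabilised regime of Algorithm~\ref{algo}: by Proposition~\ref{prop:global} the sequence $\{(U_{[p]},\lambda_{[p]})\}$ converges to a KKT point $(U_\ast,\lambda_\ast)$ of \eqref{eq:LRPOTA}, and by Lemma~\ref{lem:truncation-obj} there is an index beyond which no further truncation occurs and all iterates lie in $\V(t,n_1)\times\cdots\times\V(t,n_s)\times\B(t,n_{s+1})\times\cdots\times\B(t,n_k)\times\mathbb{R}_{\ast}^{t}$ for some fixed $t\le r$. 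Since a generic $\mathcal A$ is generic simultaneously for each of the finitely many ranks $t\in\{1,\dots,r\}$, I may relabel $t$ as $r$ and assume from the start that the tail of the sequence lies in this stable manifold $\mathrm{M}$; in particular $\lambda_\ast\ne 0$, so $(U_\ast,\lambda_\ast)\in\mathrm{M}$, and because the constraint set of \eqref{eq:LRPOTA} is a smooth manifold agreeing with $\mathrm{M}$ near any point with nonzero $\lambda$ --- equivalently, by Proposition~\ref{prop:critical-equivalence} --- the limit $(U_\ast,\lambda_\ast)$ is a critical point of \eqref{eq:projection-general} on $\mathrm{M}$.

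It then suffices to verify the four hypotheses of Theorem~\ref{thm:linear} for the tail of $\{(U_{[p]},\lambda_{[p]})\}$ (dropping finitely many initial terms changes neither the limit nor the local convergence rate, and makes $\{\mathbf x_k\}\subseteq\mathrm{M}\setminus\mathrm{M}_1$). Hypothesis~\ref{thm:linear:itema} is Lemma~\ref{lem:localdiff}: $\psi$ restricts to a local diffeomorphism of $\mathrm{W}_r$ onto the smooth manifold $\psi(\mathrm{W}_r)$, so $\dd\psi$ is in particular a surjection of tangent spaces at every point of $\mathrm{W}_r=\mathrm{M}\setminus\mathrm{M}_1$. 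Hypothesis~\ref{thm:linear:itemb} --- that $(\mathrm{M},\psi,\mathrm{M}_1)$ satisfies Assumption~\ref{assump:setm} --- is precisely what the proof of Proposition~\ref{prop:partial-geq3} establishes: for $s\ge3$ one uses the stratification $\{\mathrm{M}^{(i)}\}$ of Section~\ref{sec:lrpota}, while for $s=2$, or for $s=1$ under the assumed inequalities \eqref{lem:defectivity s=1:eq01}--\eqref{lem:defectivity s=1:eq02}, one passes to the refined stratification $\{\widetilde{\mathrm{M}}^{(i)}\}$ on which $\dd\psi$ has constant rank $\delta_i>\dim\widetilde{\mathrm{M}}^{(i)}$ --- and this is exactly where the extra conditions on the $n_i$ in the $s=1$ case are consumed. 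Hypothesis~\ref{thm:linear:itemc} was checked in the previous paragraph. Finally, Hypothesis~\ref{thm:linear:itemd}, i.e.\ Assumption~\ref{assump:iteration}, is supplied by the two estimates already proved for non-truncation iterations: the first condition is the sufficient-decrease inequality of Proposition~\ref{prop:subfficient}, and the second is the subdifferential estimate of Lemma~\ref{lem:subdiff}, whose right-hand side bounds $\lVert\dd_{\mathbf x_k}\psi^{\tp}(\psi(\mathbf x_k)-\mathbf b)-\mathbf w_k\rVert$ by $\lVert \mathbf x_k-\mathbf x_{k-1}\rVert$ with $\mathbf w_k\in\mathsf{N}_{\mathrm{W}_r}(\mathbf x_k)=\mathsf{N}_{\mathrm{M}}(\mathbf x_k)$; both hold for all large $p$ since no truncation happens on the tail and the iterates, converging to $(U_\ast,\lambda_\ast)\in\mathrm{W}_r$, eventually lie in the open submanifold $\mathrm{W}_r$ as well.

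With all four hypotheses in place, Theorem~\ref{thm:linear} yields $R$-linear convergence of $(U_{[p]},\lambda_{[p]})\to(U_\ast,\lambda_\ast)$; since $\lambda_{[p]}$ depends Lipschitz-continuously on $U_{[p]}$ through \eqref{eq:optimal-lambda} and the coordinate projection onto the $U$-block is $1$-Lipschitz, $U_{[p]}\to U_\ast$ is $R$-linear too, and $(U_\ast,\lambda_\ast)$ is a KKT point of \eqref{eq:LRPOTA} by Proposition~\ref{prop:global}. The genuinely delicate step is the reduction in the first paragraph: the combinatorial truncation mechanism of Algorithm~\ref{algo} may lower the working rank and drive iterates toward the rank-deficient locus, so one must carefully marry Proposition~\ref{prop:global}, Lemma~\ref{lem:truncation-obj}, and the location theorem (entering through Hypothesis~\ref{thm:linear:itemb}) to be certain that, eventually, the limit is an interior critical point lying in $\mathrm{W}_r=\mathrm{M}\setminus\mathrm{M}_1$ and that the tail of the iteration does as well. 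Once this is secured, the rest is bookkeeping, the analytic substance having been isolated in Proposition~\ref{prop:subfficient} and Lemma~\ref{lem:subdiff}.
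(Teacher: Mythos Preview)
Your proof is correct and follows essentially the same route as the paper's own argument: reduce to the stable rank via Lemma~\ref{lem:truncation-obj}, then verify hypotheses \ref{thm:linear:itema}--\ref{thm:linear:itemd} of Theorem~\ref{thm:linear} using Lemma~\ref{lem:localdiff}, the proof of Proposition~\ref{prop:partial-geq3}, Proposition~\ref{prop:global}, and the pair Proposition~\ref{prop:subfficient}/Lemma~\ref{lem:subdiff}, respectively. You are in fact slightly more careful than the paper in two places---noting that genericity of $\mathcal A$ can be imposed simultaneously for all $t\le r$, and explicitly checking that the tail lies in $\mathrm{W}_r=\mathrm{M}\setminus\mathrm{M}_1$ so that Assumption~\ref{assump:iteration} and the hypothesis $\{\mathbf x_k\}\subseteq\mathrm{M}\setminus\mathrm{M}_1$ of Theorem~\ref{thm:linear} are met.
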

\begin{proof}
It suffices to verify conditions \ref{thm:linear:itema}--\ref{thm:linear:itemd} in Theorem~\ref{thm:linear}. It follows from Lemma~\ref{lem:truncation-obj} that there is some $t \le r$ such that $(U_{[p]},\lambda_{[p]}) \in  \V(t,n_1) \times \cdots \times \V(t,n_s) \times \B(t,n_{s+1}) \times \cdots \times \B(t,n_k) \times \mathbb{R}_{\ast}^t$ for sufficiently large $p$.  According to Lemma~\ref{lem:localdiff},  $\mathrm{W}_t$ is locally diffeomorphic to $\psi(\mathrm{W}_t)$.  Hence \ref{thm:linear:itema} is satisfied.  Moreover,  we already see that \ref{thm:linear:itemb} holds in the proof of Proposition \ref{prop:partial-geq3}.   By Proposition~\ref{prop:global}, the sequence $\{ (U_{[p]},\lambda_{[p]})\}$ globally converges  to a KKT point $(U_*,\lambda_*)$ of \eqref{eq:LRPOTA} and this verifies \ref{thm:linear:itemc}.  Lastly,  Proposition~\ref{prop:subfficient} and Lemma~\ref{lem:subdiff} ensure that \ref{thm:linear:itemd} is satisfied.  
\end{proof}

Note that by Corollary~\ref{cor:KKT location s=1}, the requirement for \eqref{lem:defectivity s=1:eq01} and \eqref{lem:defectivity s=1:eq02} can be removed if $s=1, k\ge 4$ and $n_1 = \cdots = n_k$. Hence we obtain the following.
\begin{theorem}[{Square Tensors}]\label{thm:generic-square}
Assume that $k\ge 4$ and $n_1 = \cdots =n_k \ge 2$. For a generic $\mathcal{A}$, the sequence $\{ U_{[p]}\}$ generated by Algorithm~\ref{algo} converges $R$-linearly to a KKT point of \eqref{eq:LRPOTA}.
\end{theorem}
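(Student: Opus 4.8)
The plan is to obtain the statement as an immediate specialization of Theorem~\ref{thm:generic}. Recall that Theorem~\ref{thm:generic} already delivers $R$-linear convergence with no extra hypotheses whenever $s\ge 2$, so in that range there is nothing to do. Hence the only case requiring attention is $s=1$, where Theorem~\ref{thm:generic} invokes the two inequalities \eqref{lem:defectivity s=1:eq01} and \eqref{lem:defectivity s=1:eq02}, and the entire task is to check that these hold automatically under the present assumptions $k\ge 4$ and $n_1=\cdots=n_k$.

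First I would substitute $n_1=\cdots=n_k=n$ into \eqref{lem:defectivity s=1:eq01} and \eqref{lem:defectivity s=1:eq02}. The first becomes $(k-1)n\ge n+k-1$, i.e.\ $(k-2)n\ge k-1$; the second becomes $(k-1)n\ge n+k$, i.e.\ $(k-2)n\ge k$. The second is the binding one, equivalent to $n\ge 1+\tfrac{2}{k-2}$. For $k=4$ this reads $n\ge 2$, and for $k\ge 5$ one has $1+\tfrac{2}{k-2}\le \tfrac{5}{3}<2$, so $n\ge 2$ suffices there too. Thus under $k\ge 4$ and $n\ge 2$ both inequalities are satisfied, and Theorem~\ref{thm:generic} applies word for word, yielding the claimed $R$-linear convergence to a KKT point of \eqref{eq:LRPOTA}. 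Equivalently, one may skip the inequality bookkeeping and, at the point in the proof of Theorem~\ref{thm:generic} where hypothesis~\ref{thm:linear:itemb} of Theorem~\ref{thm:linear} is verified via Proposition~\ref{prop:partial-geq3}, simply cite Corollary~\ref{cor:KKT location s=1} instead; the remaining ingredients — the local diffeomorphism $\mathrm{W}_t\simeq\psi(\mathrm{W}_t)$ from Lemma~\ref{lem:localdiff}, global convergence from Proposition~\ref{prop:global}, and the sufficient-decrease and subdifferential estimates from Proposition~\ref{prop:subfficient} and Lemma~\ref{lem:subdiff} — are untouched.

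One small point I would make explicit is that the truncation steps in Algorithm~\ref{algo} only drive the working rank down to some $t\le r$ (Lemma~\ref{lem:truncation-obj}), but the inequalities \eqref{lem:defectivity s=1:eq01}–\eqref{lem:defectivity s=1:eq02}, and likewise the hypotheses of Corollary~\ref{cor:KKT location s=1}, involve only the ambient dimensions $n_i$ and not $r$; hence they continue to hold after truncation and the verification need not be repeated. There is essentially no obstacle here: the whole content is the elementary observation that, once the factor dimensions are equal and $k\ge 4$, the dimension conditions for the $s=1$ case reduce to $n\ge 2$, which is assumed.
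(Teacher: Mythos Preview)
Your proposal is correct and matches the paper's own argument: the paper simply notes that under $k\ge 4$ and $n_1=\cdots=n_k\ge 2$ the dimension conditions \eqref{lem:defectivity s=1:eq01}--\eqref{lem:defectivity s=1:eq02} hold (this is precisely the content of Corollary~\ref{cor:KKT location s=1}), so Theorem~\ref{thm:generic} applies directly. Your explicit verification of the inequalities and the remark about truncation not affecting them are a bit more detailed than the paper's one-line justification, but the substance is identical.
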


We remark that the sublinear convergence rate can be established for the general case \cite{YH-22}, while in Theorem~\ref{thm:generic} linear convergence rate is given for a generic case. This can be interpreted via the fact that the local branches of the solution mapping for the best low rank partially orthogonal tensor approximation problem have local Lipschitz continuity whenever the nondegeneracy of the converged KKT point holds \cite{DR-09}. However, the nondegeneracy only holds generically. Equivalently, for general polynomial systems, only local H\"olderian error bounds can hold \cite{LMP-15}. The explicit examples given in \cite{EHK-15,EK-15} present a witness for this phenomenon when $r=1$. 
\section{Conclusions}\label{sec:conclusion}
In this paper, we study the non-linear least squares (NLS) problem associated with a smooth variety $\mathrm{M}$ and a polynomial map $\psi: \mathrm{M} \to \mathbb{R}^n$.  This model encompasses a wide range of practical problems, including low rank (structured) matrix approximation problems and low rank (structured) tensor approximation problems.  Under mild assumptions,  we prove that critical points of the NLS problem are all mapped into the smooth locus of $\psi(\mathrm{M})$.  One significant implication of this result is that,  when designing and analyzing algorithms for the NLS problem, misbehaved points are guaranteed to be avoided.  As a consequence,  we obtain a linear convergence rate of an iterative sequence generated by algorithms for the NLS problem.  In particular,  we apply our general framework to establish the generic linear convergence rate of the iAPD-ALS algorithm for the low rank partially orthogonal tensor approximation problem,  without any further assumption. 

Given that numerous problems can be recast as the NLS problem for different choices of $\mathrm{M}$ and $\psi$,  our framework is potentially applicable to the convergence analysis of numerical algorithms for them.  It is possible to establish an unconditional convergence result analogous to Theorem~\ref{thm:generic-square}.  We also want to mention that our framework heavily relies on the geometry of $\mathrm{M}$ and $\psi$.  Hence,  the framework is applicable if the geometry of $(\mathrm{M},\psi)$ is well-understood.  Of particular interest is the case where $\psi(\mathrm{M})$ is a variety of structured matrices or tensors,  since such varieties are already extensively studied in the context of computational algebraic geometry \cite{L-12,DHOST16,MRW21}.


\subsection*{Acknowledgement}
This work is partially supported by the Natural Science Foundation of Hunan Province of China (Grant No. 2025JJ20006), and the National Science Foundation of China (Grant No. 12171128).

\bibliographystyle{abbrv}
\bibliography{lowrank}

\end{document}